\newcommand{\E}{\mathbb{E}}
\renewcommand{\P}{\mathbb{P}}
\newcommand{\cd}{\mathcal{D}}
\newcommand{\Z}{\mathbb{Z}}
\newcommand{\R}{\mathbb{R}}
\newcommand{\cl}{\mathcal{L}}
\newcommand*{\bo}{\boldsymbol}
\newcommand*{\wt}{\widetilde}
\newcommand{\ca}{\mathcal{A}}
\newcommand{\cb}{\mathcal{B}}
\newcommand{\cc}{\mathcal{C}}
\newcommand{\vep}{\varepsilon}
\newtheorem{theorem}{Theorem}[section]
\newtheorem{lemma}[theorem]{Lemma}
\newtheorem{proposition}[theorem]{Proposition}
\theoremstyle{remark}
\newtheorem{remark}{\bf Remark}
\numberwithin{equation}{section}
\begin{document}
\title{Limit theorems for extrema of Airy processes}
\author[Basu and Bhattacharjee]{Riddhipratim Basu and Sudeshna Bhattacharjee}

\begin{abstract}
    We establish limit theorems for the maxima and minima of Airy$_1$ and Airy$_2$ processes (denoted by $\ca_1(\cdot)$ and $\ca_2(\cdot)$ respectively) over growing intervals. In particular, we identify the finite non-zero constants that are the almost sure limits of $(\log t)^{-2/3}\max_{0\le s \le t} \ca_{i}(s)$ and $(\log t)^{-1/3}\min_{0\le s \le t} \ca_{i}(s)$ for $i=1,2$. This complements and extends the results of \cite{P23}, where the question for the maxima was considered and the order of growth was identified for both $\ca_1$ and $\ca_2$ and the constant was identified for $\ca_1$. Our approach is different from that of \cite{P23}; instead of complicated formulae for multi-point distributions, we rely on the well-known convergence of passage time profiles in planar exponential last passage percolation started from different initial conditions to $\ca_1$ and $\ca_2$, together with the recently developed sharp one-point estimates \cite{BBBK24} for the point-to-point and point-to-line passage times in exponential LPP and a combination of old and new results on the geometry of the LPP landscape.
\end{abstract}
\address{Riddhipratim Basu, International Centre for Theoretical Sciences, Tata Institute of Fundamental Research, Bangalore, India} 
\email{rbasu@icts.res.in}
\address{Sudeshna Bhattacharjee, Department of Mathematics, Indian Institute of Science, Bangalore, India}
\email{sudeshnab@iisc.ac.in }

\maketitle

\section{Introduction and Statement of Main results}
Airy$_1$ and Airy$_2$ processes (denoted $\ca_1(\cdot)$ and $\ca_2(\cdot)$ respectively) are stationary stochastic processes on $\R$ that arise in various contexts such as random matrix theory, random growth models, non-intersecting Brownian motions among others. One-point marginals of these processes are scalar multiples of GOE \cite{S05, BFPS07} and GUE Tracy-Widom \cite{PSpng02} distribution respectively. Various properties such as decay of correlations \cite{BBF22,AM03,AM05,W04,PSpng02} and ergodicity \cite{P23,PSpng02, CS14} have also been established using a variety of analytic and probabilistic techniques. {In a recent work \cite{P23}, the question of growth of the maxima of Airy processes over growing intervals was considered and a result about maxima of $\text{Airy}_1$ process and a partial result about maxima of $\text{Airy}_2$ process was proved. In this paper, we complete this picture by establishing law of fractional logarithms for \emph{both maxima and minima} of Airy$_1$ and Airy$_2$ processes. In particular, we answer a question regarding the maxima of Airy$_2$ process left open in \cite[Theorem 1.5]{P23}, and also provide a different proof of the limit theorem established for the maxima of Airy$_1$ in \cite[Theorem 1.4]{P23}, that avoids the complicated analysis of exact formulae. 

The following theorem on the growth of extrema of Airy$_2$ process is our first main result.
\begin{theorem}
\label{t:airy2limit}
For the Airy$_2$ process $\ca_2(\cdot)$ we have 
\begin{enumerate}[label=(\roman*), font=\normalfont]
\item $$\lim_{t\to \infty}  \frac{ \max_{0\le s \le t}\ca_2(s)}{(\log t)^{2/3}}=\left(\frac{3}{4}\right)^{2/3}\text{ almost surely}. $$ 
\item $$\lim_{t\to \infty}  \frac{ \min_{0\le s \le t}\ca_2(s)}{(\log t)^{1/3}}=-(12)^{1/3} \text{ almost surely}.$$
\end{enumerate}
\end{theorem}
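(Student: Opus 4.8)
The plan is to transfer, via the well-known convergence of rescaled passage-time profiles in planar exponential last passage percolation (LPP) to $\ca_2$, two pieces of sharp distributional information about the fixed process $\ca_2$, and then feed them into a Borel--Cantelli machine. The two inputs are: (i) one-sided modulus/tail estimates over bounded intervals with the exact Tracy--Widom GUE exponents, $\P\big(\sup_{[0,1]}\ca_2 > x\big) = e^{-(\frac43+o(1))x^{3/2}}$ and $\P\big(\inf_{[0,1]}\ca_2 < -x\big) = e^{-(\frac1{12}+o(1))x^3}$ as $x\to\infty$ (the lower bounds coming from the one-point marginals, the upper bounds from the rigidity of the Airy process); and (ii) a quantitative decorrelation statement: $\ca_2$ at points $s_1,\dots,s_k$ behaves, at the level of the relevant tail events, like a collection of independent variables once the $s_j$ are separated by a suitable slowly (polylogarithmically) growing distance. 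Both are obtained by passing to the limit in the corresponding LPP statements, which follow from the sharp one-point estimates of \cite{BBBK24} for point-to-point and point-to-line passage times --- whose moderate-deviation rate functions are exactly $\frac43(\cdot)^{3/2}$ (upper) and $\frac1{12}(\cdot)^3$ (lower) --- together with transversal-fluctuation and geodesic-coalescence estimates; crucially these hold with constants uniform in the scaling parameter, so they survive the limit. Given (i) and (ii), the constants in the theorem are then forced: $(3/4)^{2/3}$ solves $\frac43 c^{3/2}=1$ and $(12)^{1/3}$ solves $\frac1{12}c^3=1$.

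\emph{Upper bound on the $\limsup$ of the normalized maximum and lower bound on the $\liminf$ of the normalized minimum.} Cover $[0,t]$ by $\lceil t\rceil$ unit subintervals; by stationarity and a union bound, for $c>(3/4)^{2/3}$,
\[
\P\Big(\max_{0\le s\le t}\ca_2(s) > c(\log t)^{2/3}\Big)\le t\,\P\Big(\sup_{[0,1]}\ca_2 > c(\log t)^{2/3}\Big)= t^{\,1-\frac43 c^{3/2}+o(1)},
\]
which is summable along $t_n=e^n$, and likewise, for $c>(12)^{1/3}$,
\[
\P\Big(\min_{0\le s\le t}\ca_2(s) < -c(\log t)^{1/3}\Big)\le t\,\P\Big(\inf_{[0,1]}\ca_2 < -c(\log t)^{1/3}\Big)= t^{\,1-\frac1{12}c^{3}+o(1)},
\]
again summable along $t_n=e^n$. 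Borel--Cantelli, monotonicity of $t\mapsto\max_{[0,t]}\ca_2$ and $t\mapsto\min_{[0,t]}\ca_2$, and $\log t_{n+1}/\log t_n\to1$ then give the two one-sided a.s.\ bounds. Note that for the minimum it is essential that $\P(\inf_{[0,1]}\ca_2<-x)$ decays like $e^{-\Theta(x^3)}$ rather than like a Gaussian $e^{-\Theta(x^2)}$: the latter, which is all a naive local Brownian comparison would give, would be far too weak, since it would (wrongly) suggest a minimum of order $-(\log t)^{1/2}$. Obtaining the $e^{-(\frac1{12}-o(1))x^3}$ decay with the correct constant is exactly where the sharp lower-tail behaviour of LPP passage times below a window of endpoints, from \cite{BBBK24} and the geometry of geodesics, is used.

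\emph{Matching lower bounds.} Fix $c$ strictly below the critical constant in each case; it suffices to show that $\P\big(\max_{[0,t_n]}\ca_2\le c\,n^{2/3}\big)$ and $\P\big(\min_{[0,t_n]}\ca_2\ge -c\,n^{1/3}\big)$ are summable along $t_n=e^n$, since Borel--Cantelli on the complements, together with monotonicity and $\log t_{n+1}/\log t_n\to1$, then yields $\liminf_t(\log t)^{-2/3}\max_{[0,t]}\ca_2\ge c$ and $\limsup_t(\log t)^{-1/3}\min_{[0,t]}\ca_2\le -c$, respectively. Inside $[0,t_n]$ take $m\asymp t_n/R$ grid points at spacing $R=R(n)$, polylogarithmic in $t_n$, and let $X$ count the grid points $s$ with $\ca_2(s)>cn^{2/3}$ (respectively $\ca_2(s)<-cn^{1/3}$), so that $\{\max_{[0,t_n]}\ca_2\le cn^{2/3}\}\subseteq\{X=0\}$ (respectively $\{\min_{[0,t_n]}\ca_2\ge -cn^{1/3}\}\subseteq\{X=0\}$). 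By the one-point lower bounds in (i), $\E X = m\,e^{-(\frac43+o(1))c^{3/2}n}$ (respectively $m\,e^{-(\frac1{12}+o(1))c^3 n}$) grows exponentially in $n$, since $\frac43 c^{3/2}<1$ (respectively $\frac1{12}c^3<1$). By Chebyshev, $\P(X=0)\le\var(X)/(\E X)^2$, and (ii) shows that $\var(X)=(1+o(1))\E X$ provided $R$ is a large enough multiple of $(\log t_n)^{1/3}$ for the maximum, respectively $(\log t_n)^{2/3}$ for the minimum (the needed separation scale is dictated by the growing threshold: the two-point tail at separation $r$ exceeds the product of the marginals by only a bounded factor once $r$ is a sufficiently large multiple of that scale). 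Hence $\P(X=0)\le(1+o(1))/\E X$ is summable, as required.

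The main obstacle is establishing the quantitative decorrelation (ii) --- and, relatedly, the sharp rigidity estimate for the minimum in (i) --- with the right dependence on both the separation and the growing threshold. In the LPP picture this reduces to understanding geodesics from the origin to well-separated endpoints: these coalesce near the origin, so two Airy values at spatial separation $r$ decompose as a shared contribution (the passage time over the short coalesced segment, of typical size $\sim r^{-1}$ in Airy units) plus conditionally independent fresh contributions, and one must bound the distortion of the joint upper/lower tails this induces using the sharp one-point estimates of \cite{BBBK24} for the short shared passage time. Controlling the shared-origin overlap quantitatively, and carrying out all of this with constants uniform in the LPP scaling parameter so that the estimates descend to the fixed limiting process $\ca_2$, is the technical heart of the argument; by comparison, the reduction of the almost-sure statements to (i) and (ii) via Borel--Cantelli and monotonicity, and the passage from the subsequence $t_n=e^n$ to the full limit, are routine.
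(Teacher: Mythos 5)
Your upper-bound half (union bound over unit intervals with the sharp sup/inf tail exponents, Borel--Cantelli along $t_n=e^n$, monotonicity) matches the paper's route, with the tail inputs being exactly the pre-limiting estimates the paper proves (Lemmas \ref{lemma: itp rt ub } and \ref{lemma: itp lt ub}). The lower-bound half, however, has a genuine gap. Your argument hinges on input (ii): a quantitative multi-point decorrelation for $\ca_2$ at polylogarithmic separations and \emph{growing} thresholds, strong enough to run a second-moment method with summable error. No such estimate is proved in the paper or available in the literature --- indeed its absence is precisely why \cite{P23} could not get the matching constant for the Airy$_2$ maximum --- and deriving it from LPP is not a routine transfer. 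In particular, for the joint lower-tail (minimum) events the ``shared initial segment plus conditionally independent fresh pieces'' picture is delicate: superadditivity factorizes upper-tail-type events, but conditioning on atypically small passage times distorts geodesic behaviour, and one must control the passage time over \emph{all} crossing points of a separating line; this is exactly what the paper's events $A,B,C,D$, the restricted passage times in disjoint parallelograms, and the transversal-fluctuation inputs (Lemmas \ref{l:events}--\ref{l:bbound}, \ref{lemma: itp lt lb}, \ref{l:restricted}) are engineered to do. The paper's construction deliberately \emph{avoids} any two-point estimate for $\ca_2$: independence is exact at the LPP level because the fresh pieces live in disjoint regions, and the shared part is handled by separate events ($B$ for the maximum, $A\cap C\cap D$ for the minimum); the conclusion then only needs probabilities bounded away from $0$, uniformly in $t$, because ergodicity plus shift invariance already force the normalized limsup/liminf to be a.s.\ constant.

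There is also an internal error in your second-moment bookkeeping. If the two-point tail exceeds the product of marginals only ``by a bounded factor'' $C$, then $\var(X)\le \E X+(C-1)(\E X)^2$, so Chebyshev gives $\P(X=0)\le (\E X)^{-1}+(C-1)$, which neither tends to $0$ nor is summable. You need the factor to be $1+o(1)$ with quantitatively (summably in $n$) small excess --- which, by your own heuristic, forces separations well beyond $(\log t)^{1/3}$, resp.\ $(\log t)^{2/3}$, and makes (ii) an even sharper unproved estimate --- or you should instead invoke stationarity and ergodicity of $\ca_2$ as the paper does, for which a uniform positive lower bound on $\P(X\ge 1)$ suffices and the decorrelation demands disappear entirely. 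As written, the proposal defers the actual technical content of Theorems \ref{t:2maxlb} and \ref{t:2minlb} to an unestablished decorrelation statement, so it does not constitute a proof.
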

Partial progress towards Theorem \ref{t:airy2limit}, (i) was made in \cite[Theorem 1.5]{P23}, where the upper bound was proved relying on an estimate from \cite{CHH23}. However, the polynomial decay of correlations in $\ca_2$ was not sufficient to derive a matching lower bound; instead a lower bound of $(\frac{1}{4})^{2/3}$ was proved in \cite[Theorem 1.5]{P23}. We shall again provide a (slightly) different argument for the upper bound as well relying on one point estimates and some geometric considerations.

Our second main result is a similar theorem for the Airy$_1$ process.

\begin{theorem}
    \label{t:airy1limit}
    For the Airy$_1$ process $\ca_1(\cdot)$ we have 
     \begin{enumerate}[label=(\roman*), font=\normalfont]
\item $$\lim_{t\to \infty}  \frac{ \max_{0\le s \le t}\ca_1(s)}{(\log t)^{2/3}}=\left(\frac{3}{4\sqrt{2}}\right)^{2/3} \text{ almost surely}.$$
\item $$\lim_{t\to \infty}  \frac{ \min_{0\le s \le t}\ca_1(s)}{(\log t)^{1/3}}=-3^{1/3} \text{ almost surely}. $$
\end{enumerate}
\end{theorem}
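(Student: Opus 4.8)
The plan is to run the same strategy used for Theorem~\ref{t:airy2limit}, with the narrow-wedge (droplet) geometry replaced by the \emph{flat} one: realize $\ca_1$ as the $N^{1/3}$-scaling limit of the point-to-line (flat) last passage time profile in planar exponential LPP, and transfer all four one-sided bounds through this limit. Let $\mathcal L$ be the source anti-diagonal and, for the target $t_j:=(N+j,N-j)$, let $L_N(j)$ denote the flat passage time from $\mathcal L$ to $t_j$, centered and $N^{1/3}$-rescaled so that $L_N(\lfloor\sigma x N^{2/3}\rfloor)$ converges, uniformly on compacts in $x$, to $\ca_1(x)$ (see e.g. \cite{BFPS07}), for an explicit constant $\sigma$. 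Two inputs are needed. First, the one-point marginal $\ca_1(0)$ is a constant multiple of the GOE Tracy--Widom law, with the classical tail asymptotics $\P(\ca_1(0)>s)=\exp(-(\tfrac{4\sqrt2}{3}+o(1))s^{3/2})$ and $\P(\ca_1(0)<-s)=\exp(-(\tfrac13+o(1))s^{3})$; the matching moderate/large-deviation estimates for $L_N$, uniform in $N$, are supplied by \cite{BBBK24}. Second, flat geodesics, and their random maximizing endpoints on $\mathcal L$, have transversal fluctuations of order $N^{2/3}$ with the usual cubic-exponential upper tail. One sees the mechanism at once: with $m\asymp t$ roughly-independent spatial samples, $t\,\P(\ca_1(0)>c(\log t)^{2/3})\asymp t^{1-\frac{4\sqrt2}{3}c^{3/2}}$ and $t\,\P(\ca_1(0)<-c(\log t)^{1/3})\asymp t^{1-c^{3}/3}$, whose critical values are exactly $(\tfrac{3}{4\sqrt2})^{2/3}$ and $3^{1/3}$.

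The upper bound in (i) and the lower bound in (ii) are the union-bound directions. Discretize $[0,t]$ at spacing $\eta=\eta(t)$ of order $(\log t)^{-1}$, apply a union bound over grid points using the one-point upper/lower tail, and absorb the contribution between consecutive grid points using the locally Brownian behavior of $\ca_1$ together with a Gaussian-type tail on its short-interval oscillations; one checks that the oscillation error is negligible against the main term for this choice of $\eta$. This yields $\P\big(\max_{[0,t]}\ca_1>(c_1+\vep)(\log t)^{2/3}\big)$ and $\P\big(\min_{[0,t]}\ca_1<-(c_2-\vep)(\log t)^{1/3}\big)$ of the form $(\log t)\,t^{-\delta(\vep)}$ with $c_1=(\tfrac{3}{4\sqrt2})^{2/3}$, $c_2=3^{1/3}$, $\delta(\vep)>0$. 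Summing along $t_n=2^n$, Borel--Cantelli, the monotonicity of $t\mapsto\max_{[0,t]}\ca_1$ and $t\mapsto\min_{[0,t]}\ca_1$, and $\log t_{n+1}/\log t_n\to 1$ then give the a.s. bounds $\limsup_t(\log t)^{-2/3}\max_{[0,t]}\ca_1\le c_1$ and $\liminf_t(\log t)^{-1/3}\min_{[0,t]}\ca_1\ge-c_2$.

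The lower bound in (i) and the upper bound in (ii) are the independence directions. Choose locations $x_1<\cdots<x_m$ in $[0,t]$ that are $K$-separated, with $K=K(t)$ growing slowly (any $(\log t)^{1/3}\ll K\ll t^{\vep}$, e.g. $K=\log t$, works, so $m\asymp t/K$), and enclose the typical geodesic corridor from $\mathcal L$ to each target $t_{\lfloor\sigma x_i N^{2/3}\rfloor}$ in a box $B_i$ of transversal width $\asymp KN^{2/3}$; since the $x_i$ are $K$-separated the $B_i$ are disjoint, hence the box-restricted passage times $\widetilde L_i$ are independent, and by the localization estimates, off an event of probability $me^{-cK^3}=o(1)$ they coincide with $L_N(\lfloor\sigma x_iN^{2/3}\rfloor)$. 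For the maximum, since $\widetilde L_i\le L_N$ always, $\P(\max_i L_N\le a)\le\P(\max_i\widetilde L_i\le a)=\prod_i(1-\P(\widetilde L_i>a))\le e^{-mp/2}$ with $a=(c_1-\vep)(\log t)^{2/3}$ and $p\asymp\P(\ca_1(0)>a)$, while $mp\to\infty$; for the minimum one uses the localization event to pass from $\{\min_i\widetilde L_i<-a\}$ to $\{\min_i L_N<-a\}$ and gets $\P(\min_i L_N<-a)\ge 1-e^{-mp/2}-o(1)$ with $a=(c_2-\vep)(\log t)^{1/3}$. Letting $N\to\infty$ turns these into statements about $\ca_1$ on the window $[0,t_n]$; since the complementary events have probabilities tending to $0$ faster than summably along $t_n=2^n$, Borel--Cantelli and near-monotonicity give $\limsup_t(\log t)^{-2/3}\max_{[0,t]}\ca_1\ge c_1$ and $\liminf_t(\log t)^{-1/3}\min_{[0,t]}\ca_1\le-c_2$, which together with the previous paragraph completes the proof.

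The main obstacle is the independence step in the flat geometry. Unlike the point-to-point setting behind Theorem~\ref{t:airy2limit}, the maximizing endpoint on $\mathcal L$ is itself random, so the localization must control the transversal fluctuation of that endpoint and of the geodesic it anchors simultaneously and uniformly over the $\asymp t/K$ targets; and the lower-tail (minimum) case is the more delicate one, since there the box-restriction genuinely needs the localization event rather than the one-sided monotonicity available for the maximum. A second point, essential because we want the \emph{exact} constants and not merely their orders, is to check that every error entering the logarithm is $e^{o(\log t)}$: the sub-polynomial prefactors in the estimates of \cite{BBBK24}, the discretization and oscillation corrections, the box-exit probabilities, and the coupling to independent variables. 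Finally, a bookkeeping subtlety is that $\ca_1$ is one fixed process while the LPP inputs are indexed by $N$: one establishes all quantitative bounds uniformly in $N$, passes to $N\to\infty$ to obtain the statements about $\ca_1$ on each fixed window $[0,t_n]$, and only then runs Borel--Cantelli over $n$.
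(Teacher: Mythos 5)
Your proposal follows the same overall architecture as the paper: represent $\ca_1$ as the scaling limit of the point-to-line exponential LPP profile, feed in the sharp one-point tails of \cite{BBBK24} (your $\tfrac{4\sqrt2}{3}s^{3/2}$ and $\tfrac13 s^3$ exponents for $\ca_1$ are exactly the $\tfrac43 t^{3/2}$ and $\tfrac16 t^3$ bounds of Propositions \ref{l:p2lupper} and \ref{l:p2llower} after the $2^{1/3}$ normalization), do a union bound for the two ``not too extreme'' directions and an independence argument over disjoint corridors for the two ``extreme enough'' directions, and the constants you extract are the correct ones. Two of your execution choices genuinely differ from the paper and are worth noting. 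First, for the independence direction of the maximum you take corridors of width $\asymp \log t$ (in $N^{2/3}$ units) around $\asymp t/\log t$ well-separated targets, so that the geodesic-exit probability $e^{-c(\log t)^3}$ is negligible against the one-point tail $\approx t^{-(1-\vep)}$ and the naive ``restricted $=$ unrestricted off a small event'' comparison suffices; the paper instead uses $t$ corridors of \emph{constant} width, where that comparison fails (the exit probability is order one), and must prove the restricted passage time estimate Lemma \ref{lemma: restriced passage time large}/Lemma \ref{l:restricted} by a superadditivity argument. Your variant loses only a $\log t$ factor in the number of trials, which is invisible at the level of the constant, so it is a legitimate simplification; for the minimum it coincides with what the paper already does in Theorem \ref{thm: point to line min limit lower bound} (width $t^\delta$, bound via \cite[Theorem 2.3]{BBF22}). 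Second, you replace ergodicity by a quantitative Borel--Cantelli argument along $t_n=2^n$; the paper uses ergodicity but explicitly remarks that your route works.

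The one place where you lean on an input you neither prove nor cite is the union-bound direction: you discretize at spacing $(\log t)^{-1}$ and invoke ``locally Brownian behavior of $\ca_1$ together with a Gaussian-type tail on its short-interval oscillations.'' This oscillation estimate is precisely the technical content that the paper supplies instead in sharp interval form at the LPP level (Lemma \ref{lemma: itl rt ub} for the maximum, and the new Lemma \ref{lemma: min interval to line} for the minimum, proved via a backbone point plus Proposition \ref{l:p2llower} and \cite[Theorem 4.2]{BGZ21}); it is not an off-the-shelf fact about $\ca_1$. It can be extracted from the prelimit uniformly in $N$ (a quantitative version of Lemma \ref{lemma: local change of ptl passage time} does the job), but the tail one gets this way is stretched-exponential rather than Gaussian, and with your spacing $(\log t)^{-1}$ an exponent as weak as $\exp(-c\lambda/\sqrt{\eta})$ would \emph{not} beat $t^{-1}$ in the minimum case (the relevant quantity is $(\log t)^{1/3+1/2}$, which is $o(\log t)$); you would need to shrink the grid spacing to, say, $(\log t)^{-2}$, which is harmless since the number of grid points only gains a polylog factor. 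So the proposal is sound, but to be complete you must either prove such a uniform oscillation bound (with an exponent and spacing that are mutually consistent) or argue the union-bound step directly through interval estimates as the paper does. Finally, keep explicit track of the $2^{1/3}$ vertical and $2^{-2/3}$ horizontal factors in Theorem \ref{t:lppairy1limit}; your constant $\sigma$ hides them, and they are exactly what converts the prelimiting constants $(3/4)^{2/3}$ and $-6^{1/3}$ into $(3/(4\sqrt2))^{2/3}$ and $-3^{1/3}$.
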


As mentioned before, Theorem \ref{t:airy1limit}, (i) was already proved in \cite[Theorem 1.4]{P23}. The lower bound was proved using the superexponential decay of correlations for $\ca_1$ \cite{BBF22}, while the upper bound relied on a complicated Fredholm determinant formula for the distribution of the maximum of $\ca_1$ on a small interval from \cite{QR13}. Our proof will be different, and will avoid the analysis of complicated exact formulae; we shall rely on one point estimates and geometric arguments.  
\subsection{Airy processes and last passage percolation}
 We shall only rely on the fact that both $\ca_1$ and $\ca_2$ are obtained as weak limits of certain exponential last passage percolation (LPP) models
 (we shall also rely on the ergodicity of $\ca_1$ and $\ca_2$ but not in a very crucial way); all our probabilistic estimates are done at the level of the LPP model. Let us now state the relevant definitions of the relevant exponential LPP models.

Consider last passage percolation on $\Z^2$ with i.i.d.\ $\mbox{Exp}(1)$ passage times on the vertices. For $s\in \R$, let $u=u_{N}(s)=(N-\lfloor s(2N)^{2/3} \rfloor, N+\lfloor s(2N)^{2/3} \rfloor)\in \Z^2$. Let $T_{N}(s):=T_{\mathbf{0},u}$ denote the last passage time from $(0,0)$ to $u$ (i.e., the maximal total weight of an up/right path connecting $(0,0)$ and $u$, excluding the last vertex). We have the following theorem \cite[Theorem 3.8]{BGZ21}. 

\begin{theorem}
    \label{t:lppairy2limit}
    As $N\to \infty$, 
    \begin{equation}
    \label{e:airy2limit}
    \frac{T_{N}(s)-4N}{2^{4/3}N^{1/3}}\Rightarrow \ca_2(s)-s^2
\end{equation}
where $\Rightarrow$ denotes weak convergence in the topology of uniform convergence on compact sets. 
\end{theorem}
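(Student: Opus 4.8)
The plan is to derive the functional convergence from two ingredients: convergence of the finite--dimensional distributions, and tightness of the rescaled profiles in $C(K)$ for every compact $K\subset\R$. Since $\ca_2(\cdot)-(\cdot)^2$ is a genuine continuous process on $\R$, these two ingredients together with Prokhorov's theorem yield weak convergence in the topology of uniform convergence on compacts. (The map $s\mapsto T_N(s)$ is a step function, but its jumps are of lower order than $N^{1/3}$ uniformly on compacts, so one may equivalently work with its linear interpolation.)

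For the finite--dimensional convergence I would use the determinantal structure of exponential LPP. Via the RSK correspondence the passage times $\{T_{\mathbf 0,v}\}$ with $v$ ranging over an anti--diagonal line fit inside a Schur process, so the associated point process is determinantal with a correlation kernel built from Laguerre--type orthogonal polynomials; this is Johansson's analysis of discrete polynuclear growth, equivalent by a standard dictionary to the corresponding statement for exponential LPP. Under the centering $4N$ together with the height scaling $2^{4/3}N^{1/3}$ and the spatial scaling $(2N)^{2/3}$, a saddle--point analysis of this kernel, together with the requisite domination estimates, shows that it converges to the extended Airy kernel, i.e.\ the correlation kernel of the Airy$_2$ process. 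The deterministic parabola $-s^2$ is the second--order term of the shape asymptotics $T_{\mathbf 0,(a,b)}\approx(\sqrt a+\sqrt b)^2$ taken along the anti--diagonal $(a,b)=(N-\lfloor s(2N)^{2/3}\rfloor,\,N+\lfloor s(2N)^{2/3}\rfloor)$; expanding the square produces exactly $4N-2^{4/3}s^2N^{1/3}+o(N^{1/3})$. Convergence of the correlation kernels then upgrades to convergence of the joint laws of the top particles, which gives the finite--dimensional convergence.

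For tightness I would invoke a Kolmogorov--Chentsov criterion, which reduces the task to a uniform--in--$N$ increment estimate of the form
\[
\P\!\left(\,\bigl|T_N(s)-T_N(s')\bigr|\ge \lambda\,|s-s'|^{1/2}N^{1/3}\,\right)\le C\,e^{-c\lambda^{\alpha}}
\]
for some $\alpha>0$, uniformly over $s,s'$ in a fixed compact interval and over all large $N$; the $|s-s'|^{1/2}N^{1/3}$ scale is forced by the locally Brownian nature of the limit. Such an estimate can be obtained by combining the sharp one--point upper-- and lower--tail bounds for point--to--point passage times recorded in \cite{BBBK24} with the planar geometry of the model: monotonicity of last--passage values, ordering and coalescence of geodesics, and the FKG inequality allow one to sandwich $T_N(s)-T_N(s')$ between differences of passage times whose fluctuations genuinely live on the $|s-s'|^{1/2}N^{1/3}$ scale. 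A standard chaining argument over a dyadic mesh of $K$ then converts this into a uniform modulus--of--continuity bound, hence tightness.

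The main obstacle is the tightness step: one--point estimates by themselves control neither increments nor oscillations, so one must inject genuinely geometric input -- monotonicity, transversal--fluctuation bounds, and coalescence of geodesics -- to promote one--point control to a uniform two--point, and then multiscale, estimate. The finite--dimensional convergence, although technically involved, is by now classical and can be quoted rather than reproved.
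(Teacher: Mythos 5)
You should know at the outset that the paper does not prove Theorem \ref{t:lppairy2limit} at all: it is quoted from \cite[Theorem 3.8]{BGZ21}, with the finite-dimensional convergence attributed to \cite{BF08}, and the only weak-convergence statement actually proved in the paper is the Airy$_1$ analogue, Theorem \ref{t:lppairy1limit}, in Section \ref{s:weak}. Your outline is the same two-step scheme as that cited proof and as the paper's own Airy$_1$ argument: quote finite-dimensional convergence from the integrable literature (your parabola computation $4N-2^{4/3}s^2N^{1/3}$ is correct), then establish tightness from one-point tail bounds combined with geodesic geometry. The one genuine difference is how tightness is closed. You propose Kolmogorov--Chentsov with a uniform two-point bound $\P\bigl(|T_N(s)-T_N(s')|\ge \lambda |s-s'|^{1/2}N^{1/3}\bigr)\le Ce^{-c\lambda^{\alpha}}$ plus dyadic chaining; the paper (for Airy$_1$, following \cite{BGZ21}) instead verifies Billingsley's criterion at a single fixed scale: it covers $[-M,M]$ by $O(M/\delta)$ intervals of length $\delta$ and bounds the oscillation of the profile on each such interval (Lemma \ref{lemma: local change of ptl passage time}), localizing all geodesics from the interval through a window on an intermediate anti-diagonal via transversal-fluctuation estimates and then using interval-to-interval sup bounds on passage-time deviations. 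The fixed-scale route buys robustness: your Hölder-$\tfrac12$ increment bound must hold down to microscopic separations $|s-s'|\sim N^{-2/3}$, where $|s-s'|^{1/2}N^{1/3}=O(1)$ is comparable to single-site weights, so the exponent $\alpha$ and the step-function/interpolation issue you mention parenthetically need genuine care (e.g.\ comparison with stationary LPP increments), whereas the Billingsley version only ever needs oscillation control on intervals of fixed macroscopic width $\delta$, which is exactly what the geometric estimates deliver. With the chaining stopped at the lattice scale (or replaced by the fixed-scale oscillation bound), your plan is a correct reconstruction of the quoted proof rather than of anything carried out in this paper.
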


For $u=u_{N}(s)$ as before let $T^*_{N}(s)$ denote the last passage time from $\cl_0$ (the line $x+y=0$) to $u$ (i.e., the maximal weight among all paths that start at some point in $\cl_0$ and end at $u$ excluding the last vertex). Then we have the following theorem. 

\begin{theorem}
    \label{t:lppairy1limit}
    As $N\to \infty$, 
    \begin{equation} 
\label{eq: airy_1 convergence}
\frac{T^*_{N}(s)-4N}{2^{4/3}N^{1/3}}\Rightarrow 2^{1/3}\ca_1(2^{-2/3}s)
\end{equation}
where, as before, $\Rightarrow$ denotes weak convergence in the topology of uniform convergence on compact sets.
\end{theorem}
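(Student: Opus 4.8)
The convergence asserted in \eqref{eq: airy_1 convergence} is by now classical---it is the exponential-LPP manifestation of the fact that the flat (periodic) initial condition lies in the Airy$_1$ universality class---and the plan is to obtain it in the usual two steps: finite-dimensional convergence, followed by tightness in the topology of uniform convergence on compact sets. Here $T^*_N(s)$, the last passage time from the line $\cl_0$, plays the role of the height function of TASEP started from the flat configuration, and all of the necessary input is available at that level.

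For the finite-dimensional distributions I would import the determinantal/Pfaffian analysis of this model: under the scaling of \eqref{eq: airy_1 convergence}, the correlation kernel of the point process naturally attached to the passage times from $\cl_0$ converges to the Airy$_1$ kernel, so that for any $s_1<\cdots<s_k$ the vector $\big(\tfrac{T^*_N(s_i)-4N}{2^{4/3}N^{1/3}}\big)_{i=1}^{k}$ converges in distribution to $\big(2^{1/3}\ca_1(2^{-2/3}s_i)\big)_{i=1}^{k}$; see \cite{BFPS07,S05} for the corresponding statements for flat TASEP and flat PNG, the passage to exponential LPP being the standard last passage percolation--particle system dictionary, and the scalars $2^{1/3}$, $2^{-2/3}$ merely reconciling the LPP-natural normalization with the conventional normalization of $\ca_1$. (The absence of a parabolic correction here, in contrast with \eqref{e:airy2limit}, reflects the translation invariance of the flat initial condition along $\cl_0$.)

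It then remains to prove tightness, for which, by the Kolmogorov--Chentsov criterion, it suffices to establish a moment bound on increments such as
\[
\E\Bigl[\bigl|\tfrac{T^*_N(s)-T^*_N(s')}{2^{4/3}N^{1/3}}\bigr|^{4}\Bigr]\;\le\;C\,|s-s'|^{2},
\]
uniformly in $N$ and in $s,s'$ in a fixed compact interval (for tightness in $C([-M,M])$ one may restrict to $|s-s'|$ small). This is a standard last passage percolation increment estimate: bounding $T^*_N(s)-T^*_N(s')$ from above and below by constrained point-to-line passage times whose endpoints lie at distance $O(|s-s'|N^{2/3})$ apart, and controlling the transversal wandering of the corresponding optimal paths, the bound follows from the sharp one-point upper- and lower-tail estimates of \cite{BBBK24} together with a union bound / chaining argument along the antidiagonal, exactly as in the point-to-point setting underlying \eqref{e:airy2limit}. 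Combining the finite-dimensional convergence with this estimate yields weak convergence in $C([-M,M])$ for every $M>0$, hence in the topology of uniform convergence on compact sets. The step I expect to be the main obstacle is precisely this tightness bound: the finite-dimensional convergence is essentially a citation, whereas the uniform-in-$N$ control of the increments of $s\mapsto T^*_N(s)$ needs genuine geometric input on exponential LPP---above all, that the optimal path to $u_N(s)$ does not wander too far transversally and that passage times to nearby endpoints are comparable---which is where the sharp one-point estimates and the LPP-landscape geometry referred to in the introduction do the work.
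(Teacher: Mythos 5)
Your plan is essentially the paper's own proof: finite-dimensional convergence is cited from \cite{BFPS07,S05} (the paper also cites CPF12), and tightness is reduced to a uniform control of the increments of $s\mapsto T^*_N(s)$ proved exactly by the ingredients you name---localization of the point-to-line geodesic via transversal fluctuation estimates, comparison of passage times from nearby starting points, and the sharp one-point bounds, applied interval-by-interval with a union bound. The only (minor) difference is that the paper verifies Billingsley's modulus-of-continuity criterion through an exponential tail bound on $\max_{|t_1|,|t_2|\le s}|T_N^*(t_1)-T_N^*(t_2)|$ (Lemma \ref{lemma: local change of ptl passage time}) rather than a Kolmogorov--Chentsov fourth-moment bound, which sidesteps the small technicality that $T_N^*(\cdot)$ is piecewise constant in $s$; your tail-type estimates would in any case imply the moment bound you state.
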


For both Theorems \ref{t:lppairy2limit} (see \cite{BF08}) and \ref{t:lppairy1limit} (see \cite{BFPS07,S05,CPF12}), finite dimensional convergence are known by results in integrable probability. Upgrading this to weak convergence by proving tightness was done in the case of Airy$_2$ in \cite{BGZ21}, but we could not find anywhere in the literature where Theorem \ref{t:lppairy1limit} is explicitly stated and proved. For the sake of completeness,  we shall provide a similar argument proving Theorem \ref{t:lppairy1limit}.

\subsection{One point estimates in LPP}
We record the one point upper and lower tail moderate deviation estimates here; these will be used throughout the paper. We first write the upper tail bound for the point-to-line LPP; note that the distribution of $T^*_{N}(s)$ is independent of $s$.

\begin{proposition}{\cite[Theorem 1.6, (i)]{BBBK24}}
    \label{l:p2lupper}
    Fix $\varepsilon>0$. Then for every $t$ sufficiently large (depending on $\vep$) and for $N$ sufficiently large (depending on $\vep, t$) we have 
    $$\exp \biggl(-\bigl(\frac{4}{3}+\varepsilon\bigr)t^{3/2}\biggr)\le \P\left(T^*_{N}(0)\ge 4N+t2^{4/3}N^{1/3}\right)\le \exp \biggl(-\bigl(\frac{4}{3}-\varepsilon\bigr)t^{3/2}\biggr).$$
\end{proposition}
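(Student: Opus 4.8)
\emph{Proof plan.} The lower bound I would obtain for free from monotonicity: since $(0,0)\in\cl_0$ one has $T^*_N(0)\ge T_N(0)=T_{\mathbf{0},(N,N)}$, hence $\P(T^*_N(0)\ge 4N+t\,2^{4/3}N^{1/3})\ge\P(T_N(0)\ge 4N+t\,2^{4/3}N^{1/3})$, and the right-hand side is at least $\exp(-(\frac43+\vep)t^{3/2})$ for $t$ and $N$ large. The latter is the point-to-point analogue of the inequality being proved, and it follows from the weak convergence $\frac{T_N(0)-4N}{2^{4/3}N^{1/3}}\Rightarrow\ca_2(0)$ of Theorem~\ref{t:lppairy2limit} (portmanteau gives $\liminf_N\P(T_N(0)\ge 4N+t\,2^{4/3}N^{1/3})\ge\P(\ca_2(0)>t)$) together with the classical asymptotics $\P(\ca_2(0)>t)=\exp(-(\frac43+o(1))t^{3/2})$ of the GUE Tracy--Widom law.

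The substance is the upper bound, for which the plan is to reduce to the sharp one-point \emph{upper} bound for the point-to-point passage time, namely $\P(T_N(s)\ge 4N+u\,2^{4/3}N^{1/3})\le\exp(-(\frac43-\vep)(u+s^2)^{3/2})$ for $N$ large --- the parabolic correction $s^2$ being forced by the curvature of the limit shape. Writing $T^*_N(0)=\max_{|m|\le N}T_{(m,-m),(N,N)}$ and, by translation invariance, $T_{(m,-m),(N,N)}=T_N(m(2N)^{-2/3})$ (up to rounding), the event $\{T^*_N(0)\ge 4N+t\,2^{4/3}N^{1/3}\}$ is precisely $\{\sup_s g_N(s)\ge t\}$ for the process $g_N(s):=2^{-4/3}N^{-1/3}(T_N(s)-4N)$, which converges to $\ca_2(s)-s^2$. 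So I need to show that the supremum over all directions $s$ has essentially the same upper tail as a single marginal; in the limit this is the assertion that $\sup_s(\ca_2(s)-s^2)$ --- finite by Johansson's identity, and distributed as a multiple of GOE Tracy--Widom --- has a $\frac43 t^{3/2}$ upper tail, and I need a form of this uniform in $N$.

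The reduction will rely on two geometric inputs. First, a transversal-fluctuation bound for the point-to-line geodesic with an $N$-uniform tail $\P(\text{the maximizing }m\text{ satisfies }|m|>BN^{2/3})\le\exp(-cB^3)$, coming from the exit-point/stationarity machinery for exponential LPP; taking $B=B(\vep,t)\asymp t^{1/2}$ makes this $\le\exp(-(\frac43-\vep)t^{3/2})$, with $B$ crucially independent of $N$. Second, an $N$-uniform modulus-of-continuity bound for $g_N$ on compact intervals, of the shape $\P(\sup_{|s-s_0|\le\delta}|g_N(s)-g_N(s_0)|\ge x)\le C\exp(-c\,x^{3/2}\delta^{-\gamma})$ for some $\gamma>0$ --- exactly the kind of estimate proved en route to tightness in Theorem~\ref{t:lppairy2limit} (see \cite{BGZ21}). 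Given these, fix $\vep$, set $\eta:=\frac{\vep}{4}t$, pick $B$ as above, and pick $\delta=\delta(\vep)$ small enough that $c\,\eta^{3/2}\delta^{-\gamma}\ge 2t^{3/2}$. Let $\cg$ be the event that $g_N$ oscillates by at most $\eta$ over every length-$\delta$ subinterval of $[-B-1,B+1]$; its complement has probability at most $(CB/\delta)\exp(-c\,\eta^{3/2}\delta^{-\gamma})\le\exp(-(\frac43-\vep)t^{3/2})$ for $t$ large, since the number of subintervals does not depend on $N$. On $\cg$, the event $\{\sup_{|s|\le B}g_N(s)\ge t\}$ forces $g_N(s_m)\ge t-\eta$ for at least $c'\delta N^{2/3}$ of the lattice directions $s_m=m(2N)^{-2/3}$, so by Markov's inequality its probability is at most
\[
\frac{1}{c'\delta N^{2/3}}\sum_{|m|\lesssim BN^{2/3}}\P(g_N(s_m)\ge t-\eta)\ \lesssim\ \frac{1}{\delta N^{2/3}}\cdot N^{2/3}\!\int_{\R}\exp\big(-(\tfrac43-\vep)(t-\eta+s^2)^{3/2}\big)\,ds\ \lesssim\ \frac{t^{-1/4}}{\delta}\exp\big(-(\tfrac43-\vep)(t-\eta)^{3/2}\big),
\]
the decisive point being that the $N^{2/3}$ from counting lattice directions cancels the $N^{-2/3}$ produced by Markov, and that the $s$-integral converges thanks to the parabola. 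Since $\eta=\frac{\vep}{4}t$ we have $(\frac43-\vep)(t-\eta)^{3/2}\ge(\frac43-2\vep)t^{3/2}$, so for $t$ large this last quantity is $\le\exp(-(\frac43-3\vep)t^{3/2})$. Summing the three contributions --- far rays, the bad-oscillation event, the main term --- and relabelling $\vep$ gives the claimed upper bound.

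The hard part is exactly the tension just displayed: a naive union bound over the $\Theta(N^{2/3})$ lattice directions on $\cl_0$ inside the transversal window loses a factor $N^{2/3}$, which ruins the sharp constant $\frac43$ and cannot be recovered by enlarging $N$, since the bound is required for all large $N$. Obtaining the optimal constant thus hinges on (i) localizing the maximizing ray to an $N$-independent window via a transversal-fluctuation bound whose tail in $B$ is uniform in $N$, and (ii) trading the union bound for a Markov-inequality-on-a-count argument so that the lattice-direction count cancels --- supported by a modulus-of-continuity estimate that is itself $N$-uniform and, as $\delta\downarrow0$, strong enough to beat the $\frac43 t^{3/2}$ rate. (Alternatively one can pin the constant down analytically, via a steepest-descent analysis of the Fredholm Pfaffian for the point-to-line passage time, but the route above stays at the level of soft LPP geometry.)
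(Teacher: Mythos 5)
This proposition is not proved in the paper at all: it is quoted from \cite[Theorem 1.6, (i)]{BBBK24} (with the alternative source \cite[Lemma A.4]{BBF22}), so your argument is necessarily a different route. Structurally much of it is sound: the lower bound via $T^*_N(0)\ge T_N(0)$ together with the point-to-point lower tail (this is exactly the lower bound in Proposition \ref{l:p2pupper} at $s=0$, or, as you say, portmanteau plus the GUE Tracy--Widom tail) is fine, the localization of the maximizing ray to a window $|s|\lesssim t^{1/2}$ is the right move, and your counting-plus-Markov device does correctly cancel the $N^{2/3}$ lattice entropy against the $N^{-2/3}$ from the count, with the parabola making $\int\exp(-(\tfrac43-\vep)(t-\eta+s^2)^{3/2})\,ds$ converge. (Two minor bookkeeping points: since the deviation parameter $t-\eta+s^2$ varies with $s$, you should floor it over $O(B)$ unit blocks of $s$ so that the ``$N$ large depending on $t,s_0$'' uniformity of Proposition \ref{l:p2pupper} applies; and directions with $|m|$ of order $N$ lie outside that uniformity range and need a separate crude large-deviation bound, folded into your far-ray step.)

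The genuine gap is the modulus-of-continuity input, which carries the whole weight of your replacement for an interval estimate. With $\eta\asymp\vep t$ and $\delta=\delta(\vep)$ fixed, you need each length-$\delta$ oscillation event to have probability at most roughly $\exp(-2t^{3/2})$, i.e.\ you need a tail of the form $\exp(-c\,x^{3/2}\delta^{-\gamma})$ at level $x\asymp t$, uniformly in $N$; any weaker power of $x$ fails, since the exponent must beat $t^{3/2}$ with $\delta$ fixed (and taking $\eta=O(1)$ instead is impossible, as an $O(1)$ oscillation over a $\delta$-interval has probability bounded below in $t$). This is \emph{not} ``the kind of estimate proved en route to tightness'': tightness needs and delivers far weaker bounds --- compare the paper's own Lemma \ref{lemma: local change of ptl passage time}, whose tail is only $\exp(-c\ell^{3\vep/2})$ at oscillation scale $\ell^{1/3+\vep}s^{1/2}$, and the corresponding bounds in \cite{BGZ21} are of similar stretched-exponential strength --- so this input cannot be cited; it must be proved. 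It is very plausibly true (heuristically the cost is $\exp(-c\,x^{3/2}\delta^{-3/4})$, and comparison of profile increments with stationary LPP increments should even give an $\exp(-cx^2/\delta)$ regime), but proving it $N$-uniformly at level $x\asymp t$ is a substantive task of the same order as the sharp interval-to-point estimates (Lemma \ref{lemma: itl rt ub}, Lemma \ref{lemma: itp rt ub }), which the standard route uses directly: once one has the sharp-constant bound for the maximum over a single scaled interval, a plain union bound over the $O(N^{1/3})$ intervals with the parabolic decay finishes the proof and your counting apparatus becomes unnecessary. Relatedly, your transversal bound $\exp(-cB^3)$ itself rests on interval-level (crude-constant) estimates, so the counting trick removes the need for interval estimates only at the sharp constant, not altogether; as written, the proposal therefore has a missing proof for its key second input and a misattributed citation for it.
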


  Notice that the same result is also present in \cite[Lemma A.4]{BBF22} 
 where the upper bound is stated but the proof actually gives both the bounds. These bounds are proved using a random matrix argument in \cite{BBBK24} for Laguerre $\beta$-ensembles (point-to-line and point-to-point LPP correspond to special cases $\beta=1$ and $\beta=2$ respectively).

  Next we record the upper tail bound for point-to-point LPP. 

\begin{proposition}\cite[Theorem 1.1, (ii), Theorem 1.3, (ii)]{BBBK24}
    \label{l:p2pupper}
    Fix $\varepsilon>0, s_0>0$. Then for every $t$ sufficiently large (depending on $\vep$) and $s\in [0,s_0]$ and for $N$ sufficiently large (depending on $s_0,t, \vep$) we have 
    $$\exp \biggl(-\bigl(\frac{4}{3}+\varepsilon\bigr)t^{3/2}\biggr)\le \P\left(T_{N}(s)\ge 4N-s^22^{4/3}N^{1/3}+t2^{4/3}N^{1/3}\right)\le \exp \biggl(-\bigl(\frac{4}{3}-\varepsilon\bigr)t^{3/2}\biggr).$$
\end{proposition}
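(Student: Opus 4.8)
The plan is to obtain Proposition~\ref{l:p2pupper} as an essentially immediate consequence of the sharp point-to-point moderate deviation estimates of \cite{BBBK24}; the only work is to match the directional parametrization used there with the $(N,s)$-parametrization here and to check that the constants can be taken uniform over $s\in[0,s_0]$.

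First I would write $u_N(s)=(N-m,N+m)$ with $m=m_N(s)=\lfloor s(2N)^{2/3}\rfloor$, so that $T_N(s)$ is the last passage time to a vertex in the near-diagonal direction with coordinate ratio $\rho_N(s):=(N-m)/(N+m)$. Since $m/N\to 0$ uniformly for $s\in[0,s_0]$, we have $\rho_N(s)\to 1$ uniformly, and in particular $\rho_N(s)$ lies in a fixed compact subinterval of $(0,1)$ for all large $N$, so that \cite[Theorem 1.1, (ii)]{BBBK24} (the diagonal case $s=0$) and \cite[Theorem 1.3, (ii)]{BBBK24} (off-diagonal directions bounded away from the axes) together cover all $s\in[0,s_0]$ with constants independent of $s$. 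Next, a Taylor expansion of the exponential LPP limit shape gives $(\sqrt{N-m}+\sqrt{N+m})^2=4N-m^2/N+O(m^4/N^3)$, and inserting $m=s(2N)^{2/3}+O(1)$ yields $(\sqrt{N-m}+\sqrt{N+m})^2=4N-s^2 2^{4/3}N^{1/3}+O(N^{-1/3})$ uniformly in $s\in[0,s_0]$; likewise the KPZ fluctuation scale $\chi_N(s)$ in direction $\rho_N(s)$ equals $2^{4/3}N^{1/3}(1+o(1))$ with the $o(1)$ uniform over $s\in[0,s_0]$, since $\rho_N(s)\to 1$. Consequently $4N-s^2 2^{4/3}N^{1/3}+t\,2^{4/3}N^{1/3}=(\sqrt{N-m}+\sqrt{N+m})^2+t'\,\chi_N(s)$ with $t'=t(1+o(1))$ uniformly in $s$, so that applying \cite{BBBK24} in the direction $\rho_N(s)$ gives $\exp\bigl(-(\tfrac43+\tfrac{\vep}{2})(t')^{3/2}\bigr)\le \P\bigl(T_N(s)\ge\cdots\bigr)\le \exp\bigl(-(\tfrac43-\tfrac{\vep}{2})(t')^{3/2}\bigr)$ as soon as $t'$ exceeds a threshold $t_0(\vep)$; choosing $t\ge 2t_0(\vep)$ and then $N$ large enough (depending on $s_0,t,\vep$) makes $t'\ge t/2$ and $(t')^{3/2}=t^{3/2}(1+o(1))$, which upgrades the bounds to the stated form with $\tfrac43\pm\vep$. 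The floor in the definition of $m$ only perturbs the limit-shape value by $O(m/N)=o(N^{1/3})$ and is harmless.

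The genuinely substantive content here --- the sharp constant $\tfrac43$ --- is entirely contained in the cited input \cite{BBBK24}, proved there by a Laguerre $\beta$-ensemble computation; at the present level the one point needing a little care is simply that those estimates come with error terms uniform over a compact family of directions bounded away from the two coordinate axes (which is exactly what their Theorems 1.1 and 1.3 furnish, and what lets $t$ be chosen large depending on $\vep$ alone rather than on $s_0$). Granting that, all remaining steps are routine bookkeeping, uniform over $s\in[0,s_0]$ because $m_N(s)/N\to 0$ uniformly there.
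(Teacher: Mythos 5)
Your derivation is correct, and it matches the paper's treatment: the paper does not give an independent proof of Proposition~\ref{l:p2pupper} but simply quotes \cite[Theorem 1.1, (ii), Theorem 1.3, (ii)]{BBBK24}, leaving the translation between parametrizations implicit. Your write-up merely makes explicit the routine bookkeeping (Taylor expansion of the limit shape, identification of the fluctuation scale, uniformity over $s\in[0,s_0]$) that this citation presupposes.
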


In  \cite{BBBK24}, the above proposition is also proved using a random matrix argument, but one can also prove these result from an analysis of a Fredholm determinant formula as in \cite[Proposition A.4]{BBF22}.

Studies of upper tails in integrable last passage percolation, are by now rather standard, it involves standard analysis of exact formulae. However, while considering the lower tail, these formulae involve oscillating integrals and therefore are much harder to analyse. To derive optimal estimates for the lower tail, an approach based on Riemann-Hilbert methods have been successfully undertaken for Poissonian LPP \cite{LMS02} and Geometric LPP \cite{BDMMZ02} but such results do not appear to be available in the exponential LPP set-up. For the lower tail estimates we quote \cite{BBBK24} which again uses random matrix methods to prove such tail estimates.

\begin{proposition}\cite[Theorem 1.6, (ii)]{BBBK24}
    \label{l:p2llower}
    Fix $\varepsilon>0$. Then for every $t$ sufficiently large (depending on $\vep$) and for $N$ sufficiently large (depending on $t, \vep$) we have 
    $$\exp \biggl(-\bigl(\frac{1}{6}+\varepsilon\bigr)t^{3}\biggr)\le \P\left(T^*_{N}(0)\le 4N-t2^{4/3}N^{1/3}\right)\le \exp \biggl(-\bigl(\frac{1}{6}-\varepsilon\bigr)t^{3}\biggr).$$
\end{proposition}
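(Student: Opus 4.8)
Since Proposition \ref{l:p2llower} is quoted from \cite[Theorem 1.6, (ii)]{BBBK24} and does not follow from the other ingredients recorded above, let me describe the route I would take to prove it. The plan is to pass through the random matrix representation of the point-to-line passage time. By the classical correspondence between exponential LPP and Laguerre ensembles---Johansson for the point-to-point ($\beta=2$) case, and the half-space analogue going back to Baik--Rains and Forrester--Rains for the point-to-line ($\beta=1$) case---the law of $T^*_N(0)$ coincides, after the affine rescaling $T\mapsto (T-4N)/(2^{4/3}N^{1/3})$, with that of the rescaled largest eigenvalue of a real Wishart (Laguerre $\beta=1$) matrix of size of order $N$. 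Thus $\P\bigl(T^*_N(0)\le 4N-t\,2^{4/3}N^{1/3}\bigr)$ is the probability that the top of the associated Laguerre $\beta=1$ point process sits a distance $\asymp N^{-2/3}t$ \emph{inside} the soft spectral edge. From here there are two asymptotically equivalent analytic handles: a finite Pfaffian (equivalently, Fredholm Pfaffian) formula built from the Laguerre correlation kernel, to be analysed by steepest descent, or the Coulomb-gas representation of the joint eigenvalue density, to be analysed by large deviations. Either way the task is a \emph{moderate deviation} computation that, as $t\to\infty$ after $N\to\infty$, interpolates between the Tracy--Widom (GOE) lower tail and the full Laguerre large-deviation rate function.

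For the upper bound $\P(\cdot)\le\exp(-(\tfrac16-\vep)t^3)$ I would use the Coulomb gas: the event forces all $\Theta(N)$ eigenvalues into a half-line ending $\asymp N^{-2/3}t$ short of the unconstrained edge, and the leading cost is the difference between the logarithmic energy of the constrained equilibrium measure (the Marchenko--Pastur law pushed against a wall, which develops a small square-root bump there) and that of the Marchenko--Pastur law itself. This "pulling in the soft edge" energy is a classical computation---in the Hermite normalization it is the Dean--Majumdar-type constant proportional to $\beta$---and, carried out for Laguerre $\beta=1$ in the present normalization, it should produce exactly $\tfrac16 t^3$ at leading order, with the hard constraint contributing a factor $\exp(-\Theta(N^2)\delta^3)$ for an edge displacement $\delta\asymp N^{-2/3}t$. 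To make this an honest bound valid for \emph{all} large $N$ (not just in the $N\to\infty$ limit) I would invoke non-asymptotic concentration and loop-equation (Dyson--Schwinger) estimates to control the subleading corrections uniformly. For the matching lower bound $\P(\cdot)\ge\exp(-(\tfrac16+\vep)t^3)$ I would tilt the eigenvalue measure so that its equilibrium measure becomes the constrained one above (equivalently, exhibit an explicit near-optimal configuration), pay the change-of-measure cost $\exp(-(\tfrac16+\vep)t^3)$, and check that under the tilted law the depletion event still has probability bounded below by an $N$-independent constant.

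The hard part will be producing the \emph{sharp} constant $\tfrac16$ rather than merely the correct order $t^3$: this needs exact control of the edge-push energy together with all corrections, and it is genuinely more delicate in the $\beta=1$ setting (Pfaffian rather than determinantal structure, so the steepest-descent route must handle a matrix kernel) and in the double scaling regime where the limits $t\to\infty$ and $N\to\infty$ must be coordinated quantitatively. A crude upper bound with a suboptimal constant is accessible by soft means---e.g.\ bounding $T^*_N(0)$ below by a sum of independent block passage times along a corridor and multiplying individual lower-tail bounds---and the free inequality $\P(T^*_N(0)\le a)\le \P(T_N(s)\le a)$ shows that the point-to-line lower-tail constant is at least the point-to-point one, namely $\tfrac1{12}$; but reaching $\tfrac16$ is precisely the content of Proposition \ref{l:p2llower}, and for that the exact-formula or Coulomb-gas input seems unavoidable.
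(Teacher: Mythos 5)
This proposition is imported verbatim from \cite[Theorem 1.6, (ii)]{BBBK24}; the paper gives no proof of it, only the remark that the cited work establishes it by random matrix arguments for Laguerre $\beta$-ensembles, point-to-line LPP corresponding to $\beta=1$. Your proposal correctly recognizes this, and the route you outline---pass to the Laguerre $\beta=1$ (LOE) representation of $T^*_N(0)$ and run a moderate-deviation analysis of the lower tail at the soft edge, by Coulomb-gas/constrained equilibrium measure or by Pfaffian kernel asymptotics---is essentially the same method the paper attributes to the source, so there is no divergence of approach to report.

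That said, as a proof your text is a plan rather than an argument: the entire content of the statement is the sharp constant $\tfrac16$, and your sketch asserts it rather than derives it (``should produce exactly $\tfrac16 t^3$''), leaving unexecuted precisely the hard steps you yourself flag---the energy cost of pushing the Marchenko--Pastur edge inward in the Laguerre $\beta=1$ normalization, uniform-in-$N$ control so the bound holds for large fixed $t$ and then large $N$ (not merely after $N\to\infty$), and the matching lower bound via tilting. Your soft remarks are fine but do not close this gap: $T^*_N(0)\ge T_N(0)$ combined with Proposition \ref{l:p2plower} only yields the upper tail bound with constant $\tfrac1{12}$, and the consistency check that $\tfrac16$ is the GOE Tracy--Widom lower-tail constant $\tfrac1{24}$ rescaled by the factor $2^{2/3}$ implicit in Theorem \ref{t:lppairy1limit} identifies the target without proving it. Within the scope of this paper the correct disposition is exactly what the authors do---cite \cite{BBBK24}---and your proposal is best read as a faithful summary of how that citation is proved, not as an independent proof.
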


\begin{proposition}
    \label{l:p2plower}\cite[Theorem 1.4, (ii), Theorem 1.5, (ii)]{BBBK24}
    Fix $\varepsilon>0, s_0>0$. Then for every $t$ sufficiently large (depending on $\vep$), $s\in [0,s_0]$ and for $N$ sufficiently large (depending on $t,s_0, \vep$) we have $$ \exp \biggl(-\bigl(\frac{1}{12}+\varepsilon\bigr)t^{3}\biggr) \leq \P\left(T_{N}(s)\le 4N-s^2 2^{4/3}N^{1/3}-t2^{4/3}N^{1/3}\right)\le \exp \biggl(-\bigl(\frac{1}{12}-\varepsilon\bigr)t^{3}\biggr).$$
\end{proposition}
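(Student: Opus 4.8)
The plan is to deduce Proposition~\ref{l:p2plower} from the exact solvability of point-to-point exponential LPP, followed by a Coulomb gas analysis. The first step is the RSK correspondence: for the endpoint $u_N(s)=(m,n)$ with $m=N-\lfloor s(2N)^{2/3}\rfloor\le n=N+\lfloor s(2N)^{2/3}\rfloor$, the passage time $T_N(s)$ equals in law the top eigenvalue $\lambda_{\max}$ of the Laguerre unitary ($\beta=2$) ensemble with parameters $(m,n)$, whose ordered eigenvalues have joint density proportional to $\prod_{i<j}(\lambda_i-\lambda_j)^2\prod_i\lambda_i^{n-m}e^{-\lambda_i}$ on $(0,\infty)^m$. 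Since $n-m=2\lfloor s(2N)^{2/3}\rfloor=o(N)$, the aspect ratio $m/n$ stays in a fixed compact neighbourhood of $1$ for $s\in[0,s_0]$, which will be the source of the uniformity in $s$; and a short computation puts the Marchenko--Pastur right edge at $\lambda_+:=(\sqrt m+\sqrt n)^2=2N+2\sqrt{N^2-\lfloor s(2N)^{2/3}\rfloor^2}=4N-s^22^{4/3}N^{1/3}+o(N^{1/3})$, with Tracy--Widom fluctuation scale $2^{4/3}N^{1/3}$ (already pinned down by matching Proposition~\ref{l:p2pupper} against the $\exp(-\tfrac43 r^{3/2})$ upper-tail asymptotics of $\mathrm{TW}_2$). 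The claim then becomes the pre-limiting, moderate-deviation statement
\[
\P\!\left(\lambda_{\max}\le \lambda_+-t\,2^{4/3}N^{1/3}\right)=\exp\!\left(-\bigl(\tfrac1{12}+o(1)\bigr)t^{3}\right),
\]
which, in the limit $N\to\infty$ followed by $t\to\infty$, reduces to the classical left-tail asymptotics $\P(\mathrm{TW}_2\le -r)=\exp(-(\tfrac1{12}+o(1))r^3)$ of the GUE Tracy--Widom law.

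Unlike the upper-tail bounds of Propositions~\ref{l:p2lupper}--\ref{l:p2pupper}, which come from a saddle-point analysis of an exact moment-generating-function formula, the lower tail is a genuine ``pushed log-gas'' problem. Writing $\P(\lambda_{\max}\le b)=Z_N^{-1}\int_{(0,b]^m}\prod_{i<j}(\lambda_i-\lambda_j)^2\prod_i\lambda_i^{n-m}e^{-\lambda_i}\,d\lambda$ and rescaling by $n$ produces a $\beta=2$ Coulomb gas in an external field whose unconstrained equilibrium measure $\mu_{\mathrm{eq}}$ is Marchenko--Pastur, with density vanishing like $\kappa\sqrt{\lambda_+-x}$ at the soft edge; write $\mu_\delta$ for the equilibrium measure constrained to $(-\infty,\lambda_+-\delta]$, where $\delta$ encodes $t$ at the edge scale via $\delta\asymp t\,n^{-2/3}$. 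I would establish the two matching bounds by the standard large-deviation estimates for such gases --- the upper bound on the probability from lower semicontinuity of the energy functional $\mathcal E$, the lower bound from restricting the integral to a thin tube around $\mu_\delta$ and bounding its volume --- which reduce everything to the small-$\delta$ asymptotics of $\Delta\mathcal E(\delta):=\mathcal E[\mu_\delta]-\mathcal E[\mu_{\mathrm{eq}}]$. Solving this constrained variational problem near the square-root edge gives $\Delta\mathcal E(\delta)=c\,\delta^{3}(1+o(1))$ with $c$ an explicit multiple of $\kappa$, and threading the LPP normalisation through --- the parabolic shift $-s^22^{4/3}N^{1/3}$ being precisely what absorbs the $s$-dependence of $\lambda_+$ --- yields the displayed bound, uniformly for $s\in[0,s_0]$ and $t$ in the moderate-deviation window. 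An alternative route, exploiting the determinantal structure, is a Riemann--Hilbert steepest-descent analysis of the Fredholm determinant $\det(I-K_N)$ of the Laguerre kernel on $(b,\infty)$, in the spirit of the Poissonian and geometric treatments in \cite{LMS02,BDMMZ02}; this is more laborious but equally produces the constant.

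The correct power $t^3$ --- in contrast to the $t^{3/2}$ of the upper tail --- is comparatively soft: it reflects the square-root vanishing of the edge density and comes out of the large-deviation estimates above even with no care taken over constants. The real obstacle, and the reason we simply quote \cite{BBBK24}, is the \emph{sharp} constant $\tfrac1{12}$: pinning it down requires actually solving the constrained-equilibrium (or Riemann--Hilbert) problem at the edge and controlling every error term uniformly over $s\in[0,s_0]$ and over the relevant range of $t$. (In the regime as stated --- $t$ fixed and large, then $N\to\infty$ --- one could alternatively argue more softly by combining the weak convergence of Theorem~\ref{t:lppairy2limit} with the known left-tail asymptotics of $\mathrm{TW}_2$, at the cost of an extra equicontinuity-in-distribution argument for the uniformity in $s$; but the clean, self-contained input is the one provided by \cite{BBBK24}.) The parallel $\beta=1$ computation, with the extra $2^{1/3}$ factor coming from the point-to-line scaling of Theorem~\ref{t:lppairy1limit}, produces the constant $\tfrac16$ of Proposition~\ref{l:p2llower}.
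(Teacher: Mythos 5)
This proposition is not proved in the paper at all: it is quoted verbatim from \cite{BBBK24} (Theorem 1.4, (ii) and Theorem 1.5, (ii)), whose proof, as the paper notes, proceeds by random-matrix methods for Laguerre $\beta$-ensembles. Your sketch is consistent with that route --- the LUE correspondence for exponential point-to-point LPP, the edge location $(\sqrt{m}+\sqrt{n})^2=4N-s^22^{4/3}N^{1/3}+o(N^{1/3})$, the scale $2^{4/3}N^{1/3}$, and the $\mathrm{TW}_2$ left-tail constant $\tfrac{1}{12}$ are all correct --- and since you, like the paper, ultimately defer the sharp constant to \cite{BBBK24}, this is essentially the same approach.
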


\subsection{Our contributions and some words about the proofs}
The main technical contributions of this paper are eight estimates regarding last passage time profiles with droplet (Theorems \ref{t:2maxub}, \ref{t:2maxlb}, \ref{thm:minima of airy_2 upper bound}, \ref{t:2minlb}) and flat (Theorems \ref{thm: point to line max limit upper bound}, \ref{thm: point to line max limit lower bound}, \ref{thm: point to line min limit upper bound}, \ref{thm: point to line min limit lower bound}) initial conditions. These estimates correspond to upper and lower bounds in items (i) and (ii) in Theorems \ref{t:airy2limit} and Theorem \ref{t:airy1limit} respectively which are obtained from the LPP estimates by appealing to the weak convergence results in Theorems \ref{t:lppairy2limit} and Theorem \ref{t:lppairy1limit}. Out of these eight estimates, the limiting versions of three (upper bounds for the maxima of Airy$_1$ and Airy$_2$, and the lower bound for maxima of Airy$_1$) were previously proved in \cite[Theorem 1.4, Theorem 1.5]{P23} using very different kind of methods (see below); but we believe that the pre-limiting estimates at the LPP level are of independent interest. The other five estimates, as far as we know, are new even in the Airy limit. There are two different kinds of arguments that go into the proofs of these eight estimates which we briefly outline below. 

For four estimates involving probability upper bounds (i.e., to show that the maximums cannot to be too large or the minimums cannot be too small), we essentially show, using sharp one point estimates (Propositions \ref{l:p2lupper}, \ref{l:p2llower}, \ref{l:p2pupper}, \ref{l:p2plower}) that the probability of having a too large (or correspondingly a too small) passage time value in an interval of scaled length $O(1)$ (i.e., of unscaled length $N^{2/3}$), is $o(t^{-1})$ and the corresponding results follow by taking a union bound. This, of course, is the most natural approach for this problem, and essentially the same was employed in \cite{P23} for upper bounding the maximum of $\ca_1$ and $\ca_2$; utilising a result from \cite{CHH23} for the $\ca_2$ case and analysing a Fredholm determinant formula in the $\ca_1$ case. To obtain these four estimates on the maxima and minima of LPP profiles on intervals of scaled length 1 (Lemma \ref{lemma: itl rt ub}, Lemma \ref{lemma: min interval to line}, Lemma \ref{lemma: itp rt ub }, Lemma \ref{lemma: itp lt ub}) from the one-point tails bounds require several arguments, which are of independent interest. In fact, one of these estimates had already been obtained in a related context in \cite{BBF22}, but the other three are new. Once these estimates are obtained, the final step involving the union bound is essentially the same in all the four cases and hence we shall only provide the details the first time. 

The four other estimates involving probability lower bounds (i.e., showing that with positive probability the maxima is sufficiently large or the minima is sufficiently small) are generally much more complicated, and the LPP picture is crucially used. The idea is basically to show that the events in question can essentially be approximated by the union of $\approx t$ many independent events each of which has probability $\approx t^{-1}$. Using independence we then argue that the probability of at least one of these events hold is bounded away from 0. Designing these independent events require careful constructions crucially depending on the last passage percolation picture and on precise controls on the behaviour of the geodesics there. These estimates Lemma \ref{lemma: restriced passage time large}, Lemma \ref{l:restricted}, Lemma \ref{lemma: itp lt lb} are also of independent interest. Limiting version of one of these four estimates (corresponding to the lower bound of the maxima of $\ca_1$) was proved in \cite[Theorem 1.4]{P23} using the super-exponential decay of correlations in $\ca_1$, however one cannot do a similar argument for $\ca_2$ to get the correct lower bound since the correlations decay much more slowly there. 

Apart from the estimates described above, Theorem \ref{t:lppairy1limit} is also new; we could not find a reference with a statement of weak convergence of line-to-point LPP profile to $\ca_1$. The proof of this is fairly standard, but Theorem \ref{t:lppairy1limit} lets us extract some interesting properties of $\ca_1$ from LPP, e.g.\ strong mixing and positive association (see Remark \ref{r:sm} and Remark \ref{r:ass}); ergodicity and positive association was previously shown in \cite[Theorem 1.1, Theorem 1.2]{P23} using different arguments. 

\subsection{Notations:} Here we define some notations which we will use frequently throughout this paper. For any $r \in \Z, \bo{r}$ denotes $(r,r).$ For any $r \in \Z, \cl_{r}$ denotes the line $x+y=r.$ Sometimes we will work with the rotated axes $x+y=0$ and $x-y=0$. They will be called space axis and time axis respectively. For a vertex $v \in \mathbb{Z}^2$, $\phi(v)$ will denote the time coordinate of $v$ and $\psi(v)$ will denote the space coordinate of $v$. Precisely, we define
\begin{displaymath}
    \phi(u,v):=u+v,\qquad\psi(u,v):=u-v.
\end{displaymath}
For any vertices $u,v \in \Z^2$, such that $u \leq v$ (i.e., if $u=(u_1,u_2)$ and $v=(v_1,v_2),u_1 \leq u_2,v_1 \leq v_2$) $T_{u,v}$ will denote the last passage time between $u$ and $v$ (i.e., the maximal total weight of an up-right path between $u$ and $v$ excluding the last vertex). Similarly, for $u \in \Z^2$ and $r \in \Z$ such that $\phi(u) \geq r, T_{\cl_r,u}$ denotes the last passage time between $u$ and $\cl_r$ (i.e., the maximal total weight of an up-right path that starts at some point of $\cl_r$ and ends at $u$ excluding the last vertex). $\Gamma_{u,v}$ and $\Gamma_{\cl_r,u}$ will denote the a.s. unique geodesics (i.e., the maximal weight attaining path) in the above two cases respectively. For any such geodesic $\Gamma, \Gamma(r)$ will denote the random intersection point of $\Gamma$ and $\cl_r.$

\subsection{Organisation of the paper}
We prove Theorems \ref{t:airy1limit} and \ref{t:airy2limit} in Sections \ref{s:a1} and \ref{s:a2} respectively, assuming certain LPP estimates that are consequences of Propositions \ref{l:p2lupper}, \ref{l:p2pupper}, \ref{l:p2llower} and \ref{l:p2plower}. These estimates are proved in Section \ref{s:lpp}. Finally, Theorem \ref{t:lppairy1limit} is proved in Section \ref{s:weak}. 

\subsection*{Acknowledgements}RB is partially supported by a MATRICS grant (MTR/2021/000093) from SERB, Govt.\ of India, DAE project no.\ RTI4001 via ICTS, and the Infosys Foundation via the Infosys-Chandrasekharan Virtual Centre for Random Geometry of TIFR. SB is supported by scholarship from National Board for Higher Mathematics (NBHM) (ref no: 0203/13(32)/2021-R\&D-II/13158).

\section{Extrema of $\text{Airy}_1$ process: Proof of Theorem \ref{t:airy1limit}}
\label{s:a1}
We first prove Theorem \ref{t:airy1limit}, (i). As mentioned before, Theorem \ref{t:airy1limit}, (i) was already proved in \cite[Theorem 1.4]{P23}. We give a different proof here. Notice that using Theorem \ref{t:lppairy1limit}, and the ergodicity of the Airy$_1$ process \cite[Theorem 1.1]{P23}, Theorem \ref{t:airy1limit} follows immediately from the next two results.
\begin{theorem}
\label{thm: point to line max limit upper bound}For any $\vep>0$, there exists $\gamma>0$ such that for all sufficiently large $t$ (depending on $\vep$) and sufficiently large  $N$ (depending on $t, \vep$) 
\[
\P \left( \max_{0 \leq s \leq t}\frac{T_N^*(s)-4N}{2^{4/3}N^{1/3}(\log t)^{2/3}} \geq \left(\frac{3}{4}\right)^{2/3}(1+\vep) \right) \leq t^{-\gamma}.
\]
\end{theorem}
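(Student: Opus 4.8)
The plan is to deduce the theorem from a pre-limiting one-interval estimate at the level of exponential LPP and then run a union bound over $\asymp t$ translates. The one-interval estimate I would use is the following form of Lemma~\ref{lemma: itl rt ub}: for every $\delta>0$ there is $\ell_0(\delta)$ so that for all $\ell\ge\ell_0(\delta)$ and all $N$ large enough (depending on $\delta$ and $\ell$),
\[
\P\!\left(\max_{0\le s\le 1}\frac{T^*_N(s)-4N}{2^{4/3}N^{1/3}}\ \ge\ \ell\right)\ \le\ \exp\!\Big(-\big(\tfrac43-\delta\big)\,\ell^{3/2}\Big).
\]
This is the interval analogue of the one-point upper tail in Proposition~\ref{l:p2lupper}; granting it, everything else is routine, and since essentially the same union bound recurs in all four of the paper's ``probability upper bound'' estimates I would present the details only here.

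Put $\ell=\ell(t):=(3/4)^{2/3}(1+\vep)(\log t)^{2/3}$, so that $\tfrac43\ell(t)^{3/2}=(1+\vep)^{3/2}\log t$ — this is exactly why the correct constant is $(3/4)^{2/3}$, since $\tfrac43\big((3/4)^{2/3}\big)^{3/2}=1$. Cover $[0,t]$ by the $\lceil t\rceil$ windows $s\in[j,j+1]$, $0\le j\le\lceil t\rceil-1$. For each $j$ the finite set $\{u_N(s):j\le s\le j+1\}$ lies on a segment of $\cl_{2N}$ which, after slightly enlarging it to a fixed scaled length, is an exact lattice translate of the $j=0$ segment; since a lattice translation along $\cl_{2N}$ preserves both the i.i.d.\ environment and the line $\cl_0$, the random variable $\max_{j\le s\le j+1}(T^*_N(s)-4N)/(2^{4/3}N^{1/3})$ has (up to this harmless enlargement) the same law for every $j$. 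Hence, choosing $\delta=\delta(\vep)>0$ small enough that $(1-\tfrac34\delta)(1+\vep)^{3/2}\ge 1+2\gamma$ for some $\gamma=\gamma(\vep)>0$ (any $\gamma<\tfrac12\big((1+\vep)^{3/2}-1\big)$ works once $\delta$ is small), a union bound gives, for all $t$ large and then all $N$ large,
\[
\P\!\left(\max_{0\le s\le t}\frac{T^*_N(s)-4N}{2^{4/3}N^{1/3}(\log t)^{2/3}}\ \ge\ \Big(\tfrac34\Big)^{2/3}(1+\vep)\right)\ \le\ \lceil t\rceil\,\exp\!\Big(-\big(\tfrac43-\delta\big)\ell(t)^{3/2}\Big)\ =\ \lceil t\rceil\, t^{-(1-\frac34\delta)(1+\vep)^{3/2}}\ \le\ t^{-\gamma},
\]
and $\gamma$ depends only on $\vep$, as required.

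The real work, and the step I expect to be the main obstacle, is the one-interval estimate. The naive routes fail because $\{u_N(s):0\le s\le 1\}$ has unscaled length $\asymp N^{2/3}$, far larger than the fluctuation scale $N^{1/3}$: dominating $\max_s T^*_N(s)$ by the passage time to a single ``corner'' vertex over-shoots the target $4N+\ell\,2^{4/3}N^{1/3}$ by $\asymp N^{2/3}$ in mean, while a union bound over an $N^{1/3}$-fine grid of target vertices costs a fatal factor $\asymp N^{1/3}$ that the $N$-free bound of Proposition~\ref{l:p2lupper} cannot absorb. Instead I would argue geometrically: on the event in question pick a maximiser $s^\ast$ and its geodesic $\Gamma_{\cl_0,u_N(s^\ast)}$; use transversal fluctuation estimates to confine, outside an event of probability super-polynomially small in $N$, every geodesic from $\cl_0$ to the interval $\{u_N(s):0\le s\le 1\}$ to an $O(N^{2/3})$-window of $\psi$-coordinates on each intermediate anti-diagonal $\cl_{2N-h}$; then, using the strong positive correlations among passage times to nearby vertices together with the parabolic curvature of the limit shape (displacing an endpoint transversally by $\rho$ across a level gap $h$ costs $\asymp\rho^2/h$ in mean passage time), run a multi-scale/chaining argument over the dyadic scales $h=2^k$, $1\le 2^k\le N^{2/3}$, which localises the maximiser at the cost of at most an $o(\ell)$ loss in the effective deviation and at most $\mathrm{polylog}(N)$ invocations of the one-point bound; finally apply Proposition~\ref{l:p2lupper} to the $O(1)$ surviving target vertices and sum. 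A closely related estimate was obtained in \cite{BBF22}, and I would follow that scheme; the delicate point is the bookkeeping that keeps the exponent loss down to the multiplicative $1-o(1)$ used above — roughly because the sub-leading ``increment'' contributions carry tails with a larger exponent than $3/2$ and are therefore negligible against the main term $\exp(-\tfrac43\ell^{3/2})$.
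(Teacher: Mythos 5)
Your proposal is correct and follows essentially the same route as the paper: a one-interval upper-tail estimate for $\max_{x\in I}T_{\cl_0,x}$ at the scaled-length-one level, combined with a union bound over $\asymp t$ translated windows and the choice $\ell\asymp(3/4)^{2/3}(1+\vep)(\log t)^{2/3}$ so that $\tfrac43\ell^{3/2}=(1+\vep)^{3/2}\log t$. The only difference is that the "real work" you anticipate is not needed here: the paper simply quotes the one-interval bound as \cite[Proposition A.5]{BBF22} (Lemma \ref{lemma: itl rt ub}), so your chaining sketch plays the role of an (unnecessary) re-derivation of that cited input.
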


\begin{theorem}
\label{thm: point to line max limit lower bound}
    For any $\vep>0,$ for all $t$ sufficiently large (depending only on $\vep$) and for sufficiently large $N$ (depending only on $t, \vep$)
\[
\P \left( \max_{0 \leq s \leq t}\frac{T_N^*(s)-4N}{2^{4/3}N^{1/3}(\log t)^{2/3}} \geq \left(\frac{3}{4}\right)^{2/3}(1-\vep) \right) \geq \frac 12,
\]
\end{theorem}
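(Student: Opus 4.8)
The goal is a lower bound of the form $\P(\max_{0\le s\le t}\ldots \ge c(1-\vep))\ge 1/2$, where the max is over a scaled interval of length $t$. The natural strategy, as the authors indicate in their overview, is a \emph{second-moment/independence} argument: tile the interval $[0,t]$ (in scaled coordinates) into roughly $t$ disjoint subintervals $I_j$ of scaled length $O(1)$, and for each $j$ define an event $E_j$ that (a) forces $\max_{s\in I_j} T^*_N(s)$ to be at least the target value, (b) has probability $\ge c t^{-1}$ for an appropriate constant, and (c) is (at least approximately) independent across $j$. If the $E_j$ were genuinely independent with $\P(E_j)\ge c/t$, then $\P(\bigcup_j E_j)\ge 1-(1-c/t)^t \to 1-e^{-c}$, which can be made $\ge 1/2$ by choosing the constants suitably (and one may also shrink $\vep$ a little and take $t$ large).

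\textbf{Step 1: choice of the single-interval event.} Fix a point $s_j$ at the center of $I_j$ and let $u=u_N(s_j)$. The key building block is a lower bound on $\P\big(T^*_{N}(s_j)\ge 4N + (\tfrac34)^{2/3}(\log t)^{2/3}(1-\vep)\,2^{4/3}N^{1/3}\big)$. Writing $t^{3/2}$-type quantities: with the choice of height $r(\log t)^{2/3}$, Proposition \ref{l:p2lupper} gives probability $\ge \exp(-(\tfrac43+\vep')\,(r(\log t)^{2/3})^{3/2}) = \exp(-(\tfrac43+\vep')\,r^{3/2}\log t) = t^{-(\tfrac43+\vep')r^{3/2}}$. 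Setting $r=(\tfrac34)^{2/3}(1-\vep)$ makes $\tfrac43 r^{3/2}=(1-\vep)^{3/2}<1$, so this probability is $t^{-1+\delta}$ for some $\delta=\delta(\vep)>0$ once $\vep'$ is small enough; in particular it is $\gg t^{-1}$. This is exactly where the constant $(3/4)^{2/3}$ comes from: it is the largest $r$ for which $\tfrac43 r^{3/2}\le 1$.

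\textbf{Step 2: localizing to achieve independence.} The event $\{T^*_N(s_j)\ge \ldots\}$ depends on vertex weights along a geodesic from $\cl_0$ to $u_N(s_j)$, which could wander and overlap with geodesics for neighbouring $j$. To fix this, replace $T^*_N(s_j)$ by a \emph{restricted} passage time $\widetilde T_j$ that only uses vertices in a parallelogram (a ``tube'') of transversal width $O(N^{2/3})$ around the straight line from the relevant point of $\cl_0$ to $u_N(s_j)$ — by the localization/transversal-fluctuation estimates for geodesics in exponential LPP, with overwhelming probability the true geodesic stays in such a tube, so restricting costs essentially nothing in the one-point lower bound (this is the content of a lemma like Lemma \ref{lemma: restriced passage time large} / Lemma \ref{l:restricted} referenced in the overview). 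Choosing the $I_j$ spaced out enough (a constant multiple of their length apart, so only $\asymp t$ of them fit but the tubes are disjoint) makes the events $\widetilde E_j:=\{\widetilde T_j\ge \ldots\}$ genuinely independent, since they are measurable with respect to disjoint collections of i.i.d.\ weights. Then $\P(\bigcup_j \widetilde E_j)\ge 1-\prod_j(1-\P(\widetilde E_j))\ge 1-(1-t^{-1+\delta/2})^{\Theta(t)}\to 1$, and on this event $\max_{0\le s\le t} T^*_N(s)$ exceeds the target, giving the claim with room to spare (so $1/2$ is comfortably achieved; one absorbs the small losses into $\vep$ and takes $t$ large). The final translation to the stated scaled inequality is immediate.

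\textbf{Main obstacle.} The routine part is the union/independence bookkeeping; the real work is Step 2 — setting up the restricted passage times and proving that (i) restricting to a width-$O(N^{2/3})$ tube changes the one-point \emph{lower} tail by at most a subexponential factor (so the exponent $(1-\vep)^{3/2}$ is preserved), and (ii) the tubes for distinct $j$ can be made disjoint while still fitting $\Theta(t)$ of them into scaled length $t$. Point (i) requires a geodesic-localization input: one typically conditions on the geodesic endpoint on $\cl_0$, uses the fact that the unconstrained maximizer over $\cl_0$ is with high probability within $O(N^{2/3})$ of the antidiagonal point below $u_N(s_j)$, and then uses that a point-to-point passage time restricted to a linear-width tube still has the full lower-tail lower bound (Proposition \ref{l:p2plower}) up to negligible corrections. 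Handling the supremum over the \emph{continuum} $s\in I_j$ rather than the single point $s_j$ is easy for a lower bound (one point suffices), so that is not an issue here. I expect the delicate step to be making the constants in the tube-width and spacing choices consistent so that the product bound genuinely tends to $1$ (equivalently, stays $\ge 1/2$) uniformly in large $N$.
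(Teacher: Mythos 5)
Your skeleton for the theorem itself is the same as the paper's: tile $J^t$ into $\sim t$ intervals of scaled length $O(1)$, replace $T^*_N(s_i)$ by passage times restricted to disjoint width-$O(N^{2/3})$ parallelograms from $\cl_0$ to the $u_N(s_i)$, note that these restricted variables are genuinely independent, show each exceeds $4N+(3/4)^{2/3}(1-\vep)(\log t)^{2/3}2^{4/3}N^{1/3}$ with probability at least $t^{-(1-\vep)^{3/2}(1+o(1))}\gg t^{-1}$, and conclude from $(1-t^{-1+\delta})^{t}\to 0$. This is exactly how the paper proves the theorem, quoting Lemma \ref{lemma: restriced passage time large}.

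The gap is in your Step 2(i), which you yourself flag as the real work: the argument ``with overwhelming probability the geodesic stays in the tube, so restricting costs essentially nothing in the one-point lower bound'' fails at the height $(\log t)^{2/3}$. Since $\Theta(t)$ disjoint tubes must fit in scaled length $t$, the tube width is $kN^{2/3}$ with $k=O(1)$, and the unconditional transversal-fluctuation estimate only gives exit probability of order $e^{-ck^3}$ --- a constant independent of $t$ and $N$ --- while the unrestricted tail probability you want to transfer is $t^{-(1-\vep)^{3/2}(1+o(1))}\to 0$. Hence the bound $\P(\widetilde T\ge x)\ge \P(T\ge x)-\P(\Gamma\not\subset \text{tube})$ is vacuous for large $t$; to argue as you suggest one would need localization \emph{conditionally on} the upper-tail event, which is not among the quoted inputs. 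The paper's Lemma \ref{l:restricted} instead proves the restricted upper-tail lower bound by superadditive chaining: cut the tube into $a^{-1}$ blocks of height $aN$, where the tube width $N^{2/3}=a^{-2/3}(aN)^{2/3}$ is a large multiple of the local scale, so each block's exit probability $e^{-ca^{-2}}$ is negligible against its upper-tail probability $e^{-(4/3+\vep)at^{3/2}}$ from \eqref{eq:tail1} once $a$ is chosen with $a^{-2}\gg at^{3/2}\gg 1$; independence of the blocks then gives $\P\left(\widetilde T\ge 4N+t2^{4/3}N^{1/3}\right)\ge \left(e^{-(4/3+\vep)at^{3/2}}(1-a)\right)^{1/a}$, which is the needed $e^{-(4/3+\vep')t^{3/2}}$. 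A minor slip besides: for this theorem the relevant one-point input is the upper-tail lower bound (Proposition \ref{l:p2pupper}, or \eqref{eq:tail1}), not the lower-tail estimate Proposition \ref{l:p2plower} cited in your obstacle paragraph.
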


\begin{proof}[Proof of Theorem \ref{t:airy1limit}, (i)]
{
First note that both 
$$\limsup_{t\to \infty} \dfrac{\max_{0\le s \le t} \ca_1(s)}{(\log t)^{2/3}}~~~~\text{and}~~~~ \liminf_{t\to \infty} \dfrac{\max_{0\le s \le t} \ca_1(s)}{(\log t)^{2/3}}$$ are invariant under translations. Indeed using the facts that for $r>0$, $\dfrac{\max_{0\le s \le r} \ca_1(s)}{(\log t)^{2/3}}\to 0$ and  $\frac{\log (t+r)}{\log t}\to 1$ as $t\to \infty$ it follows that almost surely 
\[
\limsup_{t \rightarrow \infty}\frac{\max_{0 \leq s \leq t}\ca_1(s+r)}{(\log t)^{2/3}}=\limsup_{t \rightarrow \infty}\frac{\max_{0 \leq s \leq t}\ca_1(s)}{(\log t)^{2/3}}.
\]
The other cases can be checked similarly and the details are omitted. 

The shift invariance, together with the ergodicity of $\ca_1$ \cite[Theorem 1.1]{P23} (see also Remark \ref{r:sm} for an independent proof), implies that  we have, for some constants $c_1,c_2$ (possibly infinite), almost surely
$$\limsup_{t\to \infty} \dfrac{\max_{0\le s \le t} \ca_1(s)}{(\log t)^{2/3}}=c_1, \quad \liminf_{t\to \infty} \dfrac{\max_{0\le s \le t} \ca_1(s)}{(\log t)^{2/3}}=c_2.$$

Next using the continuity of the function $f \mapsto \sup_{K} f$ for compact sets $K$ in the topology of uniform convergence on compacts it follows from Theorem \ref{t:lppairy1limit} that
$\max_{0 \leq s \leq t}\frac{T_N^*(s)-4N} {2^{4/3}N^{1/3}(\log t)^{2/3}}$ converges in distribution to $\max_{0 \leq s \leq t} \frac{\ca_1(s)}{(\log t)^{2/3}}$. Therefore, we have by Theorem \ref{thm: point to line max limit upper bound}, there exists $\gamma>0$ depending on $\vep$ such that
\begin{align*}
&\P \left(\max_{0 \leq s \leq t}\frac{\ca_1(s)}{(\log t)^{2/3}} >\left( \frac 34\right)^{2/3}(1+\vep) \right)\\
&\leq \liminf_{N \rightarrow \infty}\P \left(\max_{0 \leq s \leq t}\frac{T_N^*(s)-4N} {2^{4/3}N^{1/3}(\log t)^{2/3}} > \left(\frac 34 \right)^{2/3}(1+\vep)\right) \leq t^{-\gamma}.
\end{align*}
Similarly, by Theorem \ref{thm: point to line max limit lower bound}
\begin{align*}
    &\P \left(\max_{0 \leq s \leq t}\frac{\ca_1(s)}{(\log t)^{2/3}} \geq \left( \frac 34\right)^{2/3}(1-\vep) \right)\\ & \geq \limsup_{N \rightarrow \infty} \P \left(\max_{0 \leq s \leq t}\frac{T_N^*(s)-4N} {2^{4/3}N^{1/3}(\log t)^{2/3}} \geq \left(\frac 34 \right)^{2/3}(1-\vep)\right) \geq \frac 12.
\end{align*}} 

It follows that $c_1\le 2^{-1/3}(\frac{3}{4})^{2/3}= (\frac{3}{4\sqrt{2}})^{2/3}$ and $c_2\ge 2^{-1/3}(\frac{3}{4})^{2/3}= (\frac{3}{4\sqrt{2}})^{2/3}$. Combining these the proof is completed. 
\end{proof}

A couple of remarks are in order. In the proofs of Theorem \ref{t:airy1limit}, (ii) and Theorem \ref{t:airy2limit}, the use of ergodicity and weak convergence will be identical to those in the above proof, so we shall skip the details. We also point out that the use of ergodicity in these proofs are convenient but not essential. Indeed, in the upper bound above one can simply take the sequence $t_i=2^i$ and apply Borel-Cantelli Lemma (together with the fact that $\max_{0\le s\le t} \ca_1(s)$ is increasing in $t$ and $\log t_{i+1}/\log t_i\to 1$) to get the result. One can also check from the proof of Theorem \ref{thm: point to line max limit lower bound} that the lower bound $1/2$ there can be replaced by $1-1/t^2$ upon which a similar application of Borel-Cantelli Lemma completes the proof of the lower bound.

We now move towards proving Theorem \ref{thm: point to line max limit upper bound}. We will need the following lemma. Let $I$ denote the interval of length $\lfloor (2N)^{2/3}\rfloor$ on $\cl_{2N}$ with midpoint $\bo{N}.$
\begin{lemma}\cite[Proposition A.5]{BBF22}
\label{lemma: itl rt ub}
For any $N^{2/3} \gg t \geq 0$
\[
\P \left( \max_{x \in I}T_{\cl_{0},x} \geq 4N+2^{4/3}tN^{1/3}\right) \leq C \max \{1,t\}e^{-\frac 43 t^{3/2}},
\]
for some constant $C>0.$
    
\end{lemma}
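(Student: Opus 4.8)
The plan is to reduce the statement about the maximum of the point-to-line passage times over the $\approx N^{2/3}$ vertices of the interval $I$ to the one-point upper tail bound of Proposition~\ref{l:p2lupper} (equivalently the point-to-line bound in \cite{BBF22}), at the cost of only a polynomial-in-$t$ prefactor. The naive union bound over all $\approx N^{2/3}$ vertices of $I$ is far too lossy, so the key idea is a \emph{chaining / sparsification} argument exploiting the fact that the passage time profile $x \mapsto T_{\cl_0,x}$ is, after the parabolic shift, transversally Hölder-$(1/2)^-$ on the scale of $I$: if $T_{\cl_0,x}$ exceeds $4N+2^{4/3}tN^{1/3}$ at some $x\in I$, then at a suitable reference point $x_0$ (say the midpoint $\bo N$, or the nearest point of a sparse net) the passage time is still at least $4N + 2^{4/3}(t-O(1))N^{1/3}$ with high probability. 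Concretely, I would first record the one-point bound $\P(T_{\cl_0, x} \geq 4N + 2^{4/3}t N^{1/3}) \leq e^{-(4/3-o(1))t^{3/2}}$ uniformly over $x\in I$ (this is essentially Proposition~\ref{l:p2lupper}, using that the point-to-line law does not depend on the endpoint and the parabolic curvature is negligible over an interval of scaled length $1$), and then control the transversal fluctuation of the profile.

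For the fluctuation control I would proceed as follows. Fix a reference vertex $w \in I$. For $x \in I$, consider the geodesic $\Gamma_{\cl_0, x}$; let $z = \Gamma_{\cl_0,x}(\phi(w))$ be where it crosses the anti-diagonal line through $w$ (or handle the case where the geodesic already passes $w$'s diagonal level directly). On the event that $T_{\cl_0,x} \geq 4N + 2^{4/3}tN^{1/3}$, one has $T_{\cl_0, z} + T_{z, x} \geq 4N + 2^{4/3}tN^{1/3}$, and by planarity $T_{\cl_0, w} \geq T_{\cl_0, z} + T_{z,w}$. Thus it suffices to show that with probability at least $1 - e^{-c t^{3/2}}$, simultaneously over all relevant $z$, the difference $T_{z,x} - T_{z,w}$ (a passage time over a region of diameter $O(N^{2/3})$) is at most $2^{4/3}\cdot O(1) \cdot N^{1/3}$; this is a standard transversal-increment estimate for exponential LPP over $O(1)$-scaled distances, with Gaussian-type tails — e.g. via the one-point bounds of Propositions~\ref{l:p2pupper} and \ref{l:p2plower} applied on the short interval, or by the now-classical modulus-of-continuity estimates for the prelimiting Airy sheet. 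One then gets $\P(\max_{x\in I} T_{\cl_0,x} \geq 4N + 2^{4/3}tN^{1/3}) \leq \P(T_{\cl_0, w} \geq 4N + 2^{4/3}(t - C_0)N^{1/3}) + e^{-ct^{3/2}}$, and the first term is $\leq e^{-(4/3 - o(1))(t-C_0)^{3/2}}$; absorbing the shift $C_0$ and the additive error into the constant $C$ and the polynomial prefactor $\max\{1,t\}$ (which comfortably dominates any such correction for large $t$, and trivially handles small $t$ since the probability is then $\le 1 \le C$) finishes the proof.

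The main obstacle is getting the constant in the exponent to be exactly $\tfrac43$ rather than $\tfrac43 - \vep$: a crude chaining costs a constant shift $C_0$ in $t$, and $(t-C_0)^{3/2} = t^{3/2} - \tfrac32 C_0 t^{1/2} + O(1)$, so the $t^{3/2}$ coefficient is preserved and the loss is only lower-order in $t$, which is exactly why the statement allows the clean $e^{-\frac43 t^{3/2}}$ with a multiplicative $C\max\{1,t\}$ rather than $e^{-(\frac43-\vep)t^{3/2}}$. Care is needed to ensure the transversal fluctuation estimate is uniform over the $\approx N^{2/3}$ choices of $x$ and that the high-probability event on which the profile does not oscillate by more than $O(N^{1/3})$ across $I$ has failure probability genuinely of order $e^{-ct^{3/2}}$ (not merely polynomially small), so that it does not degrade the exponent — this is where one must invoke a decent modulus-of-continuity bound for the LPP profile (available from the tightness arguments underlying Theorem~\ref{t:lppairy1limit}, or directly from iterating the one-point tails over a dyadic net inside $I$). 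Since this lemma is quoted verbatim from \cite[Proposition A.5]{BBF22}, I would in fact simply cite it, but the sketch above indicates the route one would take to reprove it.
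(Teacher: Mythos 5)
You are right that the paper does not reprove this statement (it is quoted from \cite[Proposition A.5]{BBF22}), so deferring to the citation matches the paper's treatment. The problem is with the sketch you offer as a reproof: as written it has two concrete failures. First, the reduction step is ill-posed. Both $x$ and your reference point $w$ lie in $I\subset \cl_{2N}$, so ``the anti-diagonal line through $w$'' is $\cl_{2N}$ itself, the crossing point $z=\Gamma_{\cl_0,x}(\phi(w))$ is just $x$, and $T_{z,w}$ is undefined since $z$ and $w$ are not ordered. If you instead take $z$ on an intermediate anti-diagonal at distance $O(N^{2/3})$ below $I$, then either $w$ is not reachable from $z$ at all, or the curvature loss $\E T_{z,x}-\E T_{z,w}$ is of order $(N^{2/3})^2/N^{2/3}=N^{2/3}$, not $O(N^{1/3})$; the loss is $O(N^{1/3})$ only if the anchor sits at macroscopic distance $\mu N$ from $\cl_{2N}$.

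Second, and more fundamentally, paying for the reduction with an oscillation estimate of failure probability $e^{-ct^{3/2}}$, as you propose, cannot preserve the constant $\tfrac43$, which is the entire content of the lemma: in the union bound of Theorem \ref{thm: point to line max limit upper bound} one needs exactly $\tfrac43\cdot\bigl((3/4)^{2/3}(1+\vep)\bigr)^{3/2}=(1+\vep)^{3/2}>1$, so any fixed deficit in the constant destroys the application. If the allowed oscillation is $C_0N^{1/3}$ with $C_0=O(1)$, its failure probability is a constant, not $o(e^{-\frac43 t^{3/2}})$; if $C_0=\vep t$, the oscillation bound obtainable by chaining one-point tails has exponent of order $(\vep t)^{3/2}$ with a constant strictly below $\tfrac43$, and optimizing over $\vep$ still leaves a strictly degraded exponent. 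The way the sharp constant is actually retained, both in \cite{BBF22} and in the analogous new estimates of this paper (Lemmas \ref{lemma: itp rt ub }, \ref{lemma: min interval to line}, \ref{lemma: itp lt ub}), is to anchor at a point $x_\mu$ on $\cl_{2(1+\mu)N}$ beyond the interval, use superadditivity $T_{\cl_0,x_\mu}\ge T_{\cl_0,x}+T_{x,x_\mu}$, pay the upper-tail cost $e^{-(\frac43-o(1))t^{3/2}}$ only at the single point $x_\mu$ (Proposition \ref{l:p2lupper} or \cite{BBBK24}), and control the bridging piece by a \emph{lower-tail}/minimum estimate over the interval (\cite[Theorem 4.2]{BGZ21}) whose exponent is of order $t^{3}/\mu$, hence negligible compared to $e^{-\frac43 t^{3/2}}$ once $\mu$ is chosen small. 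Your sketch replaces this $t^3$-type complementary cost with a $t^{3/2}$-type one, which is precisely where the argument fails.
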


\begin{proof}[Proof of Theorem \ref{thm: point to line max limit upper bound}]
Let $\vep>0$ be chosen arbitrarily. We wish to calculate
\[
\P \left( \max_{0\leq s \leq t}\frac{T_N^*(s)-4N}{2^{4/3}N^{1/3}(\log t)^{2/3}} \geq \left(\frac 34 \right)^{2/3}(1+\vep)\right).
\]
We apply a union bound. We chose $\vep'$ so that 
\[
  (1+\vep)^{3/2}(1-\vep') >1.
\]
Consider the interval $J^t,$ on $\cl_{2N}$ with end points $\bo{N}$ and $(N-\lfloor t(2N)^{2/3} \rfloor,N+\lfloor t(2N)^{2/3} \rfloor)$. We divide it into sub-intervals each with length $\lfloor (2N)^{2/3} \rfloor.$ Without loss of generality assume that $t$ is an integer and there are $t$ many such intervals.
For $1 \leq i \leq t$, we denote these intervals by $J_i$. We have
\[
\Big \{ \max_{0\leq s \leq t}\frac{T_N^*(s)-4N}{2^{4/3}N^{1/3}(\log t)^{2/3}} \geq \left(\frac 34 \right)^{2/3}(1+\vep) \Big \} \subset \bigcup_{i=1}^{t} \Bigg \{ \max_{x \in J_i} \frac{T_{\cl_{0},x}-4N}{2^{4/3}N^{1/3}( \log t)^{2/3}} \geq \left( \frac 34 \right)^{2/3}(1+\vep) \Bigg\}.
\]
Now, by Lemma \ref{lemma: itl rt ub} we have for sufficiently large $N$ and $t$ with $N^{2/3} \gg t \geq 0$ and $t$ sufficiently large, for each fixed $i$
\[
\P \left( \max_{x \in J_i } \frac{T_{\cl_{0},x}-4N}{2^{4/3}N^{1/3}( \log t)^{2/3}} \geq \left( \frac 34 \right)^{2/3}(1+\vep)\right) \leq e^{-(1+\vep)^{3/2}(1-\vep') \log t }.
\]
Hence, we have 
\[
\P \left( \max_{0\leq s \leq t}\frac{T_N^*(s)-4N}{2^{4/3}N^{1/3}(\log t)^{2/3}} \geq \left(\frac 34 \right)^{2/3}(1+\vep)\right) \leq t e^{-(1+\vep)^{3/2}(1-\vep') \log t }.
\]
Now, by choice of $\vep',$ we have the theorem.
\end{proof}

We now prove Theorem \ref{thm: point to line max limit lower bound}. We need the following estimate from LPP geometry whose proof is deferred to Section \ref{s:lpp}. Let $U_c$ denote the parallelogram whose opposite sides lie on $\cl_{0}$ (resp.\ $\cl_{2N}$) with midpoints $\bo{0}$ (resp.\ $\bo{N}$) and each with length $\lfloor cN^{2/3} \rfloor,$ for some $c>0$. Let us define the following restricted passage time. 
\[
T_N^{**}:=\max_{\gamma: \cl_{0} \rightarrow \bo{N}, \gamma \subset U_c}T(\gamma).
\]
\begin{lemma} 
\label{lemma: restriced passage time large} We have for fixed $\vep, c>0$and for any $t$ sufficiently large (depending on $\vep, c$) and for sufficiently large $N$ (depending on $\vep, t, c$)
\[
\P \left( T_N^{**} \geq 4N + 2^{4/3} t N^{1/3}\right) \geq \frac{1}{2}e^{-\left(\frac 43+\vep \right)t^{3/2}}.
\]

\end{lemma}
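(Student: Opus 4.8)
The plan is to produce a lower bound on $\P(T_N^{**}\ge 4N+2^{4/3}tN^{1/3})$ by comparing the restricted passage time $T_N^{**}$ (over paths confined to the parallelogram $U_c$) with the unrestricted point-to-line passage time, for which Proposition \ref{l:p2lupper} gives the sharp lower tail $\P(T^*_N(0)\ge 4N+t2^{4/3}N^{1/3})\ge \exp(-(\tfrac 43+\vep')t^{3/2})$. The loss from restricting to $U_c$ should be only a constant factor, since on the relevant moderate-deviation scale the near-optimal geodesics from $\cl_0$ to $\bo N$ have transversal fluctuations of order $N^{2/3}$ and hence already lie inside a parallelogram of constant width with probability bounded away from $0$. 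So the two events differ by an event of positive probability, and the constant $\tfrac12$ in the statement is exactly the slack we have to spend.

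First I would set up the decomposition. Fix a small $\delta>0$ (to be chosen depending on $\vep,c$) and split the route $\cl_0\to\bo N$ at an intermediate line, say $\cl_{2\lfloor \delta N\rfloor}$ near the start and $\cl_{2N-2\lfloor\delta N\rfloor}$ near the end (i.e.\ ``cut the corners'' at both the $\cl_0$ end and the $\bo N$ end), leaving a bulk segment between two lines of the form $\cl_{\alpha N}$ and $\cl_{(2-\alpha)N}$. On the bulk segment one uses the standard fact (transversal fluctuation / geodesic localization estimates, which are available via the one-point bounds in Propositions \ref{l:p2lupper}--\ref{l:p2plower} together with the usual chaining arguments, and are the kind of ``old and new results on the geometry of the LPP landscape'' invoked in the abstract) that, conditionally on the passage time being atypically large by $t2^{4/3}N^{1/3}$, the geodesic stays within transversal distance $c'N^{2/3}$ of the straight line joining its endpoints with probability at least $1-e^{-c''}$ for $c'$ large but fixed. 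The two corner pieces contribute only $O(1)$ to the passage time after centering (their typical value is $4\delta N + O((\delta N)^{1/3})$ and their lower deviations on scale $N^{1/3}$ are harmless), so by taking $\delta$ small and using the BK / FKG-type independence of disjoint regions one can absorb them.

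Concretely, the key steps in order: (1) write $T^*_N(0)\ge 4N + t2^{4/3}N^{1/3}$ and intersect this event with the geodesic-localization event $\mathsf{Loc}$ that $\Gamma_{\cl_0,\bo N}$ stays in a width-$c_0N^{2/3}$ parallelogram; (2) bound $\P(\mathsf{Loc}^c \cap \{T^*_N(0)\ \text{large}\})$ — this is where one needs a conditional localization estimate, and it requires that the cost of forcing the geodesic far from the diagonal outweighs the $e^{-(4/3)t^{3/2}}$ gain, which is true because transversal excursions of scaled size $r$ cost $\gtrsim r^2$ in the exponent while only $t^{3/2}$ is being bought; (3) for a suitable $c_0\le c$ conclude $\P(T_N^{**}\ge 4N+t2^{4/3}N^{1/3})\ge \P(T^*_N(0)\ \text{large}) - \P(\mathsf{Loc}^c\cap \cdots) \ge \tfrac12 e^{-(4/3+\vep)t^{3/2}}$ after renaming $\vep'$; (4) handle the case $c_0<c$ versus $c_0>c$: if the natural localization width $c_0$ exceeds $c$ one instead shrinks the target, or equivalently notes monotonicity in $c$ and reduces to a possibly smaller $c$, then rescales — but since $c$ is fixed and the statement only asks for \emph{some} bound with the $\tfrac12$ prefactor for $t$ large, a width-$c$ parallelogram with $c$ arbitrarily small still captures a positive fraction by choosing $t$ large (the required width grows like $t^{1/2}N^{2/3}/N^{2/3}=t^{1/2}$, which is $o(N^{2/3})$ but \emph{not} $O(1)$) — so the honest version is to let the parallelogram width be allowed to depend on $t$ is not permitted here; hence one must instead use that a geodesic achieving excess $t 2^{4/3}N^{1/3}$ can be realized within transversal distance $O(t^{1/2}N^{2/3})$, and for $N$ large (depending on $t$) this is $\le cN^{2/3}$, which is consistent with the order of quantifiers ``$N$ sufficiently large depending on $\vep,t,c$''.

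The main obstacle is step (2)–(4): getting a \emph{conditional} transversal-fluctuation bound that is quantitatively strong enough — namely that, given the point-to-line time is large by $t2^{4/3}N^{1/3}$, the geodesic's maximal transversal displacement exceeds $m N^{2/3}$ with probability $\le C e^{-cm^3}$ or at least $\le \tfrac14$ once $m$ (allowed to depend on $t$, of order $t^{1/2}$) is large — and then checking that for $N$ large relative to $t$ this displacement fits inside the fixed-$c$ parallelogram. Everything else (the corner-trimming, the use of Proposition \ref{l:p2lupper}, and the final subtraction to land at $\tfrac12$) is routine. I would prove the conditional localization via a union bound over dyadic transversal scales, using Proposition \ref{l:p2pupper} for the ``detour'' passage time through a far-away point together with Proposition \ref{l:p2lupper} for the complementary piece, exactly the standard argument for transversal fluctuations adapted to the moderate-deviation regime.
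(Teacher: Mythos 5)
Your overall plan — start from the point-to-line lower bound of Proposition \ref{l:p2lupper} and subtract the probability that the geodesic exits $U_c$ — has a genuine quantitative gap at your steps (2)--(4), and it is not the route the paper takes. The event $\{T^*_N(0)\ge 4N+t2^{4/3}N^{1/3}\}\cap\{\Gamma_{\cl_0,\bo{N}}\not\subset U_c\}$ is \emph{not} negligible at the scale you need: passing through a point at transversal distance of order $cN^{2/3}$ costs only an extra parabolic term of order $c^2 2^{4/3}N^{1/3}$, so the true probability of the joint event is of order $\exp\bigl(-\tfrac43 t^{3/2}-O(c^2\sqrt{t})\bigr)$ with $c$ \emph{fixed}. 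Any upper bound you can prove for it via a union bound over detour points and Propositions \ref{l:p2pupper}, \ref{l:p2lupper} is of the form $\exp\bigl(-(\tfrac43-\vep')(t+O(c^2))^{3/2}\bigr)$, which for large $t$ is \emph{larger} than the only available lower bound $\exp\bigl(-(\tfrac43+\vep'')t^{3/2}\bigr)$ for the unrestricted event, since the gain $O(c^2)\sqrt t$ is swamped by the $\pm\vep t^{3/2}$ slack in all the quoted one-point estimates. So the subtraction cannot be closed: you would need either the tail of $T^*_N(0)$ to relative precision $e^{o(\sqrt t)}$, or a direct conditional localization statement, neither of which follows from the cited propositions. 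Your step (4) also contains an outright error: a transversal width $O(t^{1/2}N^{2/3})$ is never $\le cN^{2/3}$ for fixed $c$ and large $t$, no matter how large $N$ is, since both scale as $N^{2/3}$ (in fact, conditionally on the upper tail the geodesic \emph{localizes}, at scale $t^{-1/4}N^{2/3}$, but that correct fact is again not provable at the required precision from the tools you invoke).

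The paper avoids conditioning altogether: it observes that $T_N^{**}$ dominates the restricted point-to-point passage time from $\bo{0}$ to $\bo{N}$, so the lemma is the $s=0$ case of Lemma \ref{l:restricted}, whose proof splits the diagonal into $1/a$ independent blocks of length $aN$ (with $a$ chosen so that $a^{-2}\gg a t^{3/2}\gg 1$) and forces each block's \emph{restricted} passage time to exceed its mean by $at\,2^{4/3}N^{1/3}$. At block scale the strip width is much larger than the block's natural transversal scale $(aN)^{2/3}$, so the unconditional fluctuation correction $e^{-ca^{-2}}$ really is negligible compared to the block probability $e^{-(\frac43+\vep)at^{3/2}}$ from \eqref{eq:tail1}; superadditivity and independence of the blocks then multiply back to $e^{-(\frac43+\vep)t^{3/2}}$ up to the harmless factor $(1-a)^{1/a}$, which preserves the sharp constant because the $3/2$-power cost is additive under this splitting. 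This block decomposition is exactly the idea missing from your proposal, and it is what makes the restricted-versus-unrestricted comparison provable with only crude transversal fluctuation bounds.
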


\begin{proof}[Proof of Theorem \ref{thm: point to line max limit lower bound}]
    As before, we consider the interval $J^t$ and divide it into interval of length $\lfloor(2N)^{2/3} \rfloor$. Without loss of generality we assume $t$ is an integer and there are $t$ many intervals. For $1 \leq i \leq t,$ let $u_N(s_i)$ be the midpoints of the respective intervals $J_i$'s. We wish to calculate 
    \[
    \P \left( \max_{0 \leq s \leq t}\frac{T_N^*(s)-4N}{2^{4/3}N^{1/3}(\log t)^{2/3}} \leq \left(\frac{3}{4}\right)^{2/3}(1-\vep) \right).
    \]
    Let us consider the following disjoint parallelograms. For $1 \leq i \leq t$
    \[
    U_i:=\Big \{x \in \Z^2: 0 \leq \phi(x) \leq 2N, |\psi(x)-\psi(u_N(s_i))| \leq \lfloor \frac{N^{2/3}}{2} \rfloor \Big \}.
    \]
    For $1 \leq i \leq t,$ let us define
    \[
    T_N^{**}(s_i):=\max_{\gamma:\cl_{0}\rightarrow u_N(s_i),\gamma \subset U_i}T(\gamma).
    \]
   
    \[
    \Big \{\max_{0 \leq s \leq t}\frac{T_N^*(s)-4N}{2^{4/3}N^{1/3}(\log t)^{2/3}} \leq \left(\frac{3}{4}\right)^{2/3}(1-\vep) \Big \} \subset \bigcap_{i=1}^{t}\Big \{\frac{T_N^{**}(s_i)-4N}{2^{4/3}N^{1/3}(\log t)^{2/3}} \leq \left(\frac{3}{4}\right)^{2/3}(1-\vep) \Big \}.
    \]
    
    By Lemma \ref{lemma: restriced passage time large}, taking $c=\frac 12$, for $t$ sufficiently large (depending on $\vep$)
    \[
    \P \left( \frac{T_N^{**}(s_i)-4n}{2^{4/3}N^{1/3}(\log t)^{2/3}} \leq \left(\frac{3}{4}\right)^{2/3}(1-\vep)\right) \leq 1-\frac 12 e^{- (1-\vep^2)^{3/2} \log t}.
    \]
    Hence, by the fact that the distribution of $T_N^*(s)$ does not depend on $s$ and independence we get
    \[
    \P \left( \max_{0 \leq s \leq t}\frac{T_N^*(s)-4N}{2^{4/3}N^{1/3}(\log t)^{2/3}} \leq \left(\frac{3}{4}\right)^{2/3}(1-\vep) \right) \leq \left(1- \frac 12 e^{- (1-\vep^2)^{3/2} \log t}\right)^{t}.
    \]
   
    Now, the right hand side goes to $0$ as $t \rightarrow \infty.$ So, we can chose $t$ large enough so that the right hand side is smaller than $\frac 12$ uniformly in $N.$ This completes the proof. 
\end{proof} 

We now prove Theorem \ref{t:airy1limit}, (ii). Similar to the previous case the theorem will follow from Theorem \ref{thm: point to line min limit upper bound} and Theorem  \ref{thm: point to line min limit lower bound} below.
\begin{theorem}
\label{thm: point to line min limit upper bound}For any $\vep>0$ there exists $\gamma>0$ such that for for sufficiently large $t$ (depending on $\vep$), sufficiently large $N$ (depending on $t, \vep$) 
\[
\P \left( \min_{0 \leq s \leq t}\frac{T_N^*(s)-4n}{2^{4/3}N^{1/3}(\log t)^{1/3}} \leq -\left(6\right)^{1/3}(1+\vep) \right) \leq t^{-\gamma}.
\]
\end{theorem}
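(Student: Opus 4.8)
The proof mirrors that of Theorem~\ref{thm: point to line max limit upper bound}; the only genuinely new input is a lower-tail analogue of Lemma~\ref{lemma: itl rt ub} for the \emph{minimum} of the line-to-point passage time over an interval of scaled length one. Precisely, I will use the following, which is the content of Lemma~\ref{lemma: min interval to line} (proved in Section~\ref{s:lpp}): with $I$ an interval of length $\lfloor(2N)^{2/3}\rfloor$ on $\cl_{2N}$, for every $\vep'>0$ there are constants $C=C(\vep')>0$ and $\kappa>0$ such that
\[
\P\Bigl(\min_{x\in I}T_{\cl_{0},x}\le 4N-2^{4/3}rN^{1/3}\Bigr)\le C\,r^{\kappa}\,e^{-\frac16(1-\vep')r^{3}}\qquad\text{for all }N^{2/3}\gg r\ge 1.
\]
The exponent here is the sharp one-point constant of Proposition~\ref{l:p2llower}, and both the polynomial prefactor and the loss $\vep'$ will be harmless below. (Recall that, by shift-invariance of the weights along $\cl_{0}$, the law of $\min_{x\in I}T_{\cl_{0},x}$ depends only on the length of $I$, not its position on $\cl_{2N}$, so the estimate applies verbatim to each sub-interval below.) Obtaining this estimate is the main obstacle: monotonicity of passage times runs the wrong way for the lower tail, since the minimum over an $N^{2/3}$-window can only be smaller than the value at a single point; one therefore combines Proposition~\ref{l:p2llower} with a \emph{uniform} modulus-of-continuity bound for the profile $x\mapsto T_{\cl_{0},x}$ on $\cl_{2N}$, exploiting its locally Brownian-type fluctuations on the $N^{1/3}$ scale, so that $\min_{x\in I}T_{\cl_{0},x}$ can be compared to a fixed-point passage time up to a sub-leading error before the one-point bound is applied. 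Everything after this lemma is a routine union bound.

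Granting the lemma, fix $\vep>0$ and choose $\vep'=\vep'(\vep)>0$ so small that $(1-\vep')(1+\vep)^{3}>1$; set $\gamma:=\tfrac13\bigl((1-\vep')(1+\vep)^{3}-1\bigr)>0$ (so $\gamma$ depends only on $\vep$). As in the proof of Theorem~\ref{thm: point to line max limit upper bound}, let $J^{t}$ be the interval on $\cl_{2N}$ with endpoints $\bo{N}$ and $(N-\lfloor t(2N)^{2/3}\rfloor,\,N+\lfloor t(2N)^{2/3}\rfloor)$, and partition it into $t$ sub-intervals $J_{1},\dots,J_{t}$, each of length $\lfloor(2N)^{2/3}\rfloor$ (assuming without loss of generality that $t$ is an integer). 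Since $T^{*}_{N}(s)=T_{\cl_{0},u_{N}(s)}$ and $\{u_{N}(s):0\le s\le t\}=J^{t}=\bigcup_{i=1}^{t}J_{i}$, writing $r_{t}:=6^{1/3}(1+\vep)(\log t)^{1/3}$ we have
\[
\Bigl\{\min_{0\le s\le t}\frac{T_{N}^{*}(s)-4N}{2^{4/3}N^{1/3}(\log t)^{1/3}}\le -6^{1/3}(1+\vep)\Bigr\}\ \subseteq\ \bigcup_{i=1}^{t}\Bigl\{\min_{x\in J_{i}}T_{\cl_{0},x}\le 4N-2^{4/3}\,r_{t}\,N^{1/3}\Bigr\}.
\]

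Finally, apply the interval-to-line estimate with $r=r_{t}$ to each $J_{i}$, which is legitimate once $t$ is large (so that $r_{t}\ge 1$) and $N$ is large enough, depending on $t$ and $\vep$, that $N^{2/3}\gg r_{t}$. Since $\tfrac16 r_{t}^{3}=(1+\vep)^{3}\log t$, each term is at most $C\,r_{t}^{\kappa}\,t^{-(1-\vep')(1+\vep)^{3}}$, and a union bound yields
\[
\P\Bigl(\min_{0\le s\le t}\frac{T_{N}^{*}(s)-4N}{2^{4/3}N^{1/3}(\log t)^{1/3}}\le -6^{1/3}(1+\vep)\Bigr)\le C\,r_{t}^{\kappa}\,t^{\,1-(1-\vep')(1+\vep)^{3}}=C'(\log t)^{\kappa/3}\,t^{-3\gamma}\le t^{-\gamma}
\]
for all $t$ sufficiently large (depending on $\vep$), the last step absorbing the polylogarithmic factor. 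This is the claimed bound. As usual, Theorem~\ref{t:airy1limit}, (ii) then follows from this estimate together with the weak convergence in Theorem~\ref{t:lppairy1limit} and the ergodicity of $\ca_{1}$, exactly as in the proof of Theorem~\ref{t:airy1limit}, (i), and we do not repeat the argument.
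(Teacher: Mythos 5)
Your proof is correct and is essentially the paper's argument: the paper establishes this theorem precisely by combining Lemma \ref{lemma: min interval to line} with the same union bound used for Theorem \ref{thm: point to line max limit upper bound}, which is what you carry out (and the choice of $\gamma$ and the shift-invariance remark are fine). The one nuance is that Lemma \ref{lemma: min interval to line} is stated for an interval of scaled length $\lfloor\delta(2N)^{2/3}\rfloor$ with $\delta$ small depending on $\vep$, not length $\lfloor(2N)^{2/3}\rfloor$ as you quote it; this is harmless, since the unit-length version you use (even with just a constant prefactor) follows by covering the unit interval with $\lceil 1/\delta\rceil$ translates and a union bound, or equivalently one runs your union bound over $t/\delta$ sub-intervals of length $\lfloor\delta(2N)^{2/3}\rfloor$.
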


\begin{theorem}
\label{thm: point to line min limit lower bound}
    For any $\vep>0,$ for all $t$ sufficiently large (depending only on $\vep$) and for sufficiently large $N$ (depending on $t, \vep$)
\[
\P \left( \min_{0 \leq s \leq t}\frac{T_N^*(s)-4N}{2^{4/3}N^{1/3}(\log t)^{1/3}} \leq -\left(6 \right)^{1/3}(1-\vep) \right) \geq \frac 12.
\]
    
\end{theorem}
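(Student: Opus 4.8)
The plan is to produce, inside $[0,t]$, a family of $M\asymp t/\log t$ well‑separated points $s_1<\cdots<s_M$ together with pairwise disjoint ``tubes'' $R_1,\dots,R_M$, where $R_i$ is the parallelogram $\{v:\ 0\le\phi(v)\le 2N,\ |\psi(v)-\psi(u_N(s_i))|\le \tfrac12 K N^{2/3}\}$ with $K\asymp\log t$, and then to run a union bound over independent events. Write $\tau:=6^{1/3}(1-\vep)(\log t)^{1/3}$, so the event in the statement is $\{\min_{0\le s\le t}T^*_N(s)\le 4N-\tau 2^{4/3}N^{1/3}\}$. Let $T^{R_i}_N(s_i)$ denote the last passage time from $\cl_0\cap R_i$ to $u_N(s_i)$ over up/right paths contained in $R_i$, and set
\[
E_i:=\{T^{R_i}_N(s_i)\le 4N-\tau 2^{4/3}N^{1/3}\},\qquad G_i:=\{\Gamma_{\cl_0,u_N(s_i)}\subseteq R_i\}.
\]
The point of the construction is that $E_i$ is measurable with respect to the weights in $R_i$, so $E_1,\dots,E_M$ are independent, while $G_i$ will be shown to have probability close to $1$.

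Since every up/right path contributing to $T^{R_i}_N(s_i)$ also contributes to $T^*_N(s_i)$, we have $T^{R_i}_N(s_i)\le T^*_N(s_i)$, hence $E_i\supseteq\{T^*_N(s_i)\le 4N-\tau 2^{4/3}N^{1/3}\}$; as the law of $T^*_N(s)$ does not depend on $s$, Proposition \ref{l:p2llower} gives, for $t$ large, $\P(E_i)\ge \exp(-(\tfrac16+\vep')\tau^3)=t^{-(1+6\vep')(1-\vep)^3}$. Choosing $\vep'$ small enough that $(1+6\vep')(1-\vep)^3<1$ (possible because $(1-\vep)^3<1$), this is $\ge t^{-\alpha}$ for some $\alpha=\alpha(\vep)<1$ — this is precisely the role of the constant $6^{1/3}$: it is calibrated so that $\tfrac16\tau^3=(1-\vep)^3\log t$ has coefficient strictly below $1$. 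By independence of the $E_i$, $\P(\bigcup_i E_i)\ge 1-(1-t^{-\alpha})^M\ge 1-\exp(-Mt^{-\alpha})$, and $Mt^{-\alpha}\asymp t^{1-\alpha}/\log t\to\infty$, so $\P(\bigcup_i E_i)\to 1$ as $t\to\infty$.

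On $G_i$ the a.s.\ unique geodesic achieving $T^*_N(s_i)$ lies inside $R_i$, so $T^*_N(s_i)=T^{R_i}_N(s_i)$ and hence $E_i\cap G_i\subseteq\{T^*_N(s_i)\le 4N-\tau 2^{4/3}N^{1/3}\}$. Therefore
\[
\P\Big(\min_{0\le s\le t}T^*_N(s)\le 4N-\tau 2^{4/3}N^{1/3}\Big)\ \ge\ \P\Big(\bigcup_i(E_i\cap G_i)\Big)\ \ge\ \P\Big(\bigcup_i E_i\Big)-\sum_{i=1}^M\P(G_i^c).
\]
To control the last sum I will use a transversal fluctuation estimate for point‑to‑line geodesics — that $\Gamma_{\cl_0,u_N(s_i)}$ leaves a tube of transversal width $KN^{2/3}$ with probability at most $C\exp(-cK^3)$ (the bound $C\exp(-cK)$ would already suffice) — which is a consequence of the one‑point moderate deviation bounds (Propositions \ref{l:p2lupper}--\ref{l:p2plower}) and the curvature of the limit shape, proved in Section \ref{s:lpp}. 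Taking $K=C_0\log t$ with $C_0$ large and the separation $s_{i+1}-s_i\asymp K$ (so the $R_i$ are disjoint and $M\asymp t/\log t$) makes $\sum_i\P(G_i^c)\le MC\exp(-cK^3)\to 0$. Combining with the previous paragraph, the probability in the theorem is $\ge 1-o(1)\ge\tfrac12$ for all $t$ large (depending on $\vep$) and all $N$ large (depending on $t,\vep$, so that Proposition \ref{l:p2llower}, the transversal fluctuation estimate, and the lattice rounding of the $R_i$ all apply).

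The routine ingredients here are the monotone comparison $T^{R_i}_N\le T^*_N$, the arithmetic matching $6^{1/3}$ to the lower‑tail rate $\tfrac16\tau^3$, and the independence/union bound. The one genuinely nontrivial point is the transversal fluctuation input $\P(G_i^c)\le C\exp(-cK^3)$: the naive comparison that bounds the best path making a transversal excursion of order $KN^{2/3}$ by a sum of two passage times through the ``corners'' of a width‑$N^{2/3}$ block overshoots by an additive $O(N^{2/3})$, which dominates the parabolic gain $\asymp K^2N^{1/3}$; so one must instead chain over dyadic transversal scales, and it is there that the limit‑shape curvature and the one‑point upper tails are used in an essential way.
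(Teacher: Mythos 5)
Your proposal is correct and follows essentially the same route as the paper's proof: well-separated points with pairwise disjoint tubes, independent restricted passage times whose lower tail is bounded below via Proposition \ref{l:p2llower} (using $T^{R_i}_N\le T^*_N$), a transversal-fluctuation event forcing the geodesic into the tube so that restricted and unrestricted passage times coincide, and an independence/union bound to conclude. The only differences are parameter choices (tube width and spacing $\asymp\log t$ versus the paper's $t^{\delta}$) and that the geodesic-confinement estimate you defer to is exactly the input the paper imports from \cite[Theorem 2.3]{BBF22} rather than something proved in Section \ref{s:lpp}.
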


\begin{proof}[Proof of Theorem \ref{t:airy1limit}, (ii)]   By ergodicity of $\ca_1$, Theorem \ref{t:lppairy1limit}, Theorem \ref{thm: point to line min limit upper bound}, Theorem \ref{thm: point to line min limit lower bound}, and applying same argument as we did in Theorem \ref{t:airy1limit}, (i)
the theorem follows.  

\end{proof}

For the proof of Theorem \ref{thm: point to line min limit upper bound}, we need the following result which we will prove in Section \ref{s:lpp}.
Let $I^\delta$ denote the interval of length $\lfloor \delta (2N)^{2/3} \rfloor$ on $\cl_{2N}$ with midpoint $\bo{N}$. \begin{lemma}
    \label{lemma: min interval to line}
    For any $\vep>0$, there exists $\delta>0$ such that such that for any $t$ sufficiently large (depending on $\vep$), for $N$ sufficiently large (depending on $t, \vep$) 
    \[
    \P \left( \min_{x \in I^\delta}T_{\cl_{0},x}-4N \leq -t2^{4/3}N^{1/3}\right) \leq e^{-\frac {1}{6}(1-\vep) t^{3}}.
    \]
\end{lemma}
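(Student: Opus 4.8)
The plan is to control the probability that the point-to-line passage time $T_{\cl_0,x}$ is abnormally small *simultaneously* for all $x$ in the scaled-length-$\delta$ interval $I^\delta$, and to do this by reducing to a single one-point lower-tail estimate (Proposition \ref{l:p2llower}) at the midpoint $\bo{N}$, paying only a small multiplicative cost for the interval. The key geometric input is that $x\mapsto T_{\cl_0,x}$ cannot fluctuate too much over an interval of scaled length $\delta$: if the minimum over $I^\delta$ is below $4N - t 2^{4/3}N^{1/3}$, then either $T_{\cl_0,\bo{N}}$ itself is below $4N - (t-c\delta^{\alpha})2^{4/3}N^{1/3}$ for a suitable modulus-of-continuity exponent, or there is an atypically large transversal fluctuation of the relevant geodesic. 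First I would make this dichotomy precise: write $T_{\cl_0,\bo{N}} \le T_{\cl_0,x} + |T_{\cl_0,x}-T_{\cl_0,\bo{N}}|$ for the $x\in I^\delta$ achieving the minimum, and bound the increment $T_{\cl_0,x}-T_{\cl_0,\bo{N}}$ using the standard comparison of point-to-line passage times to nearby endpoints on the same anti-diagonal line $\cl_{2N}$ (this is an Airy-sheet-type local fluctuation estimate; on scale $\delta$ the increments are of order $\delta^{1/2}N^{1/3}$ with Gaussian-type tails, so they are $o(tN^{1/3})$ once $\delta$ is small relative to $\vep$).

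The main steps, in order, would be: (1) fix $\vep>0$ and choose $\delta$ small (to be pinned down at the end); (2) on the event $\{\min_{x\in I^\delta} T_{\cl_0,x} \le 4N - t2^{4/3}N^{1/3}\}$, split according to whether the transversal increment $\sup_{x\in I^\delta}|T_{\cl_0,x}-T_{\cl_0,\bo{N}}|$ exceeds $\vep' t 2^{4/3}N^{1/3}$ or not, where $\vep'$ is chosen so that $(1-\vep')^3(1-\vep'') > 1-\vep$ for the $\vep''$ loss in Proposition \ref{l:p2llower}; (3) on the small-increment event, deduce $T_{\cl_0,\bo{N}} \le 4N - (1-\vep')t2^{4/3}N^{1/3}$ and apply the point-to-line lower-tail bound (Proposition \ref{l:p2llower}) to get a contribution $\le \exp(-(\frac16-\vep'')(1-\vep')^3 t^3) \le \frac12 e^{-\frac16(1-\vep)t^3}$; (4) on the large-increment event, bound the probability that the point-to-line profile has an increment of order $t N^{1/3}$ over a scaled interval of length $\delta$ — this should be super-exponentially small in $t$, hence negligible, for instance $\le e^{-ct^3/\delta}$ or even a better bound, certainly $\le \frac12 e^{-\frac16(1-\vep)t^3}$ once $\delta$ is small; (5) sum the two contributions.

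For Step (4) I would use that $T_{\cl_0,x}-T_{\cl_0,\bo{N}}$ is dominated in absolute value by a point-to-point increment along $\cl_{2N}$, namely by the one-point fluctuations of $T_{\cl_0,\bo{N}}$ and $T_{\cl_0,x}$ around $4N$ plus a curvature correction; more efficiently, one can use the ordering/monotonicity of point-to-line passage times together with the fact that for the geodesic $\Gamma_{\cl_0,\bo{N}}$ to miss the slightly widened strip would itself be a rare event, and on the complementary event the profile is controlled by a localized last-passage quantity whose fluctuations on scale $\delta$ are $O(\delta^{1/2}N^{1/3})$ with the requisite tails. Concretely, the cleanest route is: the increment has the same law as $|T_{\cl_0,\bo{N}} - T_{\cl_0,\bo{N}+ (\lfloor\delta(2N)^{2/3}\rfloor/2)(1,-1)}|$ by stationarity of the point-to-line field in the transversal direction, and the right tail of such an increment at level $r$ is at most $C e^{-c r^{2}/\delta}$ on the relevant range (a transversal-increment moderate-deviation estimate of the type appearing in the LPP literature, e.g. via chaining from Propositions \ref{l:p2lupper}--\ref{l:p2llower}); plugging $r = \vep' t$ gives a bound far smaller than $e^{-\frac16(1-\vep)t^3}$.

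The step I expect to be the main obstacle is Step (4): making the transversal-increment estimate rigorous at the required moderate-deviation scale (increments of order $t$ over scaled length $\delta$, with $t$ allowed to grow) while keeping the dependence on $\delta$ explicit enough to absorb it into the choice of $\delta(\vep)$. If a clean off-the-shelf increment bound is not available, I would instead run a one-sided argument: cover $I^\delta$ by $O(1)$ (independent of $N$, depending on $\delta$) dyadic sub-scales and use Proposition \ref{l:p2llower} together with the trivial monotonicity $T_{\cl_0,x}\ge T_{w,x}$ for $w$ the point of $\cl_0$ below $x$, reducing to finitely many point-to-point lower-tail events whose union bound still gives the factor $\le \frac12 e^{-\frac16(1-\vep)t^3}$ provided $\delta$ is chosen small; this avoids any two-sided modulus-of-continuity input at the cost of a slightly more careful geometric setup, and is the version I would write up if the increment estimate proves delicate. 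Everything else (the choice of $\vep',\vep''$, the final arithmetic of exponents, the "for $N$ large" caveats inherited from Propositions \ref{l:p2lupper}--\ref{l:p2llower}) is routine.
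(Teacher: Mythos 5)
Your plan hinges on Step (4), which you yourself flag as the main obstacle, and that step is a genuine gap rather than a technicality. The increment bound you propose, $\P\bigl(|T_{\cl_0,x}-T_{\cl_0,\bo{N}}|\ge rN^{1/3}\bigr)\le Ce^{-cr^{2}/\delta}$, would be quantitatively useless here even if it were available: plugging $r=\vep' t$ gives $e^{-c\vep'^{2}t^{2}/\delta}$, which for a fixed $\delta=\delta(\vep)$ and $t\to\infty$ is far \emph{larger} than the target $e^{-\frac16(1-\vep)t^{3}}$, so it cannot be absorbed into the main term (your parenthetical hope of ``$e^{-ct^{3}/\delta}$ or better'' is not supported by anything you cite). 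Moreover, any bound obtained by chaining from the one-point estimates (Propositions \ref{l:p2lupper}--\ref{l:p2llower}) for the \emph{full-depth} profile is limited by the $\exp(-ct^{3/2})$ upper-tail exponent: a large profile increment can be produced by an upper-tail fluctuation of the last macroscopic stretch of one endpoint's passage time, so the event $\sup_{x\in I^{\delta}}|T_{\cl_0,x}-T_{\cl_0,\bo{N}}|\ge\vep' t\,2^{4/3}N^{1/3}$ cannot be shown to be $o\bigl(e^{-\frac16 t^{3}}\bigr)$ along these lines. Your fallback is also flawed: replacing $T_{\cl_0,x}$ by $T_{w,x}$ and using point-to-point lower tails trades the point-to-line constant $\frac16$ (Proposition \ref{l:p2llower}) for the point-to-point constant $\frac1{12}$ (Proposition \ref{l:p2plower}), yielding only $e^{-\frac1{12}(1-\vep)t^{3}}$, which does not imply the stated bound for small $\vep$; and a direct union bound over the $\asymp\delta N^{2/3}$ lattice points of $I^{\delta}$ diverges with $N$.

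The paper's proof takes a different route that sidesteps the increment of the full profile. Fix $\mu$ small, set $\delta=\mu^{2/3}$, and use superadditivity through the intermediate point $(1-\mu)\bo{N}$: on $\{\min_{x\in I^{\delta}}T_{\cl_0,x}-4N\le -t2^{4/3}N^{1/3}\}$, either $T_{\cl_0,(1-\mu)\bo{N}}-4(1-\mu)N\le-(1-\vep')t2^{4/3}N^{1/3}$, whose probability is at most $e^{-\frac16(1-\vep)t^{3}}$ by Proposition \ref{l:p2llower} after a suitable choice of $\vep'$ (the long piece carries the optimal constant), or $\min_{x\in I^{\delta}}\{T_{(1-\mu)\bo{N},x}-4\mu N\}\le-\vep' t2^{4/3}N^{1/3}$, whose probability is at most $Ce^{-c\vep'^{3}t^{3}/\mu}$ by the interval-to-point estimate of \cite[Theorem 4.2, (i)]{BGZ21}, because the short piece has depth $\mu N$ and hence fluctuation scale $(\mu N)^{1/3}$. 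The $1/\mu$ gain in the exponent coming from the short depth is exactly what your midpoint-comparison scheme lacks, and the cited interval estimate also handles the minimum over all of $I^{\delta}$ in one stroke, with no chaining or modulus-of-continuity input.
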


\begin{proof}[Proof of Theorem \ref{thm: point to line min limit upper bound}] The proof is similar to Theorem \ref{thm: point to line max limit upper bound}. We need to apply a union bound and Lemma \ref{lemma: min interval to line}. To avoid repetition we omit the details. 
\end{proof}

\begin{proof}[Proof of Theorem \ref{thm: point to line min limit lower bound}]
    As before, we consider the interval $J^t$ and divide it into interval of length $\lfloor t^{\delta} (2N)^{2/3} \rfloor,$ where we will choose $\delta>0$ small enough depending on $\vep$. Without loss of generality assume $t^{1-\delta}$ is an integer and for $1 \leq i \leq t^{1-\delta},$ let $u_N(s_i)$ be the midpoints of the respective intervals $J_i$'s. We wish to calculate 
    \[
    \P \left( \min_{0 \leq s \leq t}\frac{T_N^*(s)-4N}{2^{4/3}N^{1/3}(\log t)^{2/3}} \leq -\left(6\right)^{1/3}(1-\vep) \right).
    \]
    We define the parallelograms $U_i^t$ as follows.
     \[
    U_i^t:=\Big \{x \in \Z^2: 0 \leq \phi(x) \leq 2N, |\psi(x)-\psi(u_N(s_i))| \leq \frac{t^\delta}{16}  \Big \}.
    \]
    
    Note that the parallelograms are disjoint. For $1 \leq i \leq t^{1-\delta},$ let 
    \[
    T_N^{**}(s_i):=\max_{\gamma:\cl_{0}\rightarrow u_N(s_i),\gamma \subset U_i^t}T(\gamma).
    \]
    Further, we consider the following events.
    \[
    \mathsf{TF}_i:=\Big \{ \Gamma_{\cl_0,u_N(s_i)} \subset U_i^t\Big \},
    \]
    We have 
    \begin{align*}
    &\Big \{\min_{0 \leq s \leq t}\frac{T_N^*(s)-4N}{2^{4/3}N^{1/3}(\log t)^{1/3}} \geq -\left(6\right)^{1/3}(1-\vep) \Big \}\\
    &\subset \left(\bigcap_{i=1}^{t^{1-\delta}}\Big \{\frac{T_N^{**}(s_i)-4N}{2^{4/3}N^{1/3}(\log t)^{1/3}} \geq -\left(6\right)^{1/3}(1-\vep) \Big \}\right) \bigcup \left( \bigcup_{i=1}^{t^{1-\delta}}\mathsf{TF}_i^c\right).
    \end{align*}
    Observe that the events under the intersection are independent. Hence, by Proposition \ref{l:p2llower} and \cite[Theorem 2.3]{BBF22} we have for $t^\delta \ll n^{1/14}$ and $t$ sufficiently large (depending on $\vep$)
    \[
    \P \left( \min_{0 \leq s \leq t}\frac{T_N^{*}(s)-4n}{2^{4/3}N^{1/3}(\log t)^{1/3}} \geq -\left(6\right)^{1/3}(1-\vep)\right) \leq \left(1- \frac 12e^{- (1-\vep^2)^{3} \log t}\right)^{t^{1-\delta}}+t^{1-\delta} \left( e^{ct^{2 \delta}-\frac 16 t^{3 \delta}} \right).
    \]

     Now, we can chose $\delta>0$ small enough (depending on $\vep$) so that the right hand side goes to $0$ as $t \rightarrow \infty.$ So, we can chose $t$ large enough (depending on $\delta$) so that the right hand side is smaller than $\frac 12$ uniformly in $N.$ This completes the proof.
\end{proof}

\section{Extrema of Airy$_2$ process: Proof of Theorem \ref{t:airy2limit}}
\label{s:a2}
In this section, we prove Theorem \ref{t:airy2limit}, (i) and Theorem \ref{t:airy2limit}, (ii). The general structure of the proofs are similar to the ones for the Airy$_1$ process but the specific details of the arguments are somewhat different. Theorem \ref{t:airy2limit}, (i) will follow from the following two theorems. 
\begin{theorem}
\label{t:2maxub}
For any $\varepsilon>0$, there exists $\gamma>0$ such that for sufficiently large $t$ (depending on $\vep$), for all $N$ sufficiently large (depending on $t, \vep$) 
$$\P\left(\max_{0\le s\le t} \frac{T_{N}(s)-4N+s^22^{4/3}N^{1/3}}{2^{4/3}N^{1/3}(\log t)^{2/3}}\ge \left(\frac 34 \right)^{2/3}(1+\varepsilon) \right) \leq t^{-\gamma}.$$
\end{theorem}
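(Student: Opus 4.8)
The plan is to mirror the proof of Theorem~\ref{thm: point to line max limit upper bound}: a union bound over $O(t)$ space-intervals of scaled length one, with the probability on each interval controlled by a single-interval upper-tail estimate for the \emph{deparabolized} point-to-point profile. Write $\hat T_N(s):=T_N(s)-4N+s^2 2^{4/3}N^{1/3}$, so that $\hat T_N(s)$ is the passage time $T_{\bo{0},u_N(s)}$ recentered at its typical value $4N-s^2 2^{4/3}N^{1/3}$, and the event in the theorem concerns $\max_{0\le s\le t}\hat T_N(s)/\bigl(2^{4/3}N^{1/3}(\log t)^{2/3}\bigr)$. The essential point is that the parabola must be kept \emph{inside} the maximum, where it cancels exactly against the typical value: if one instead bounded $s^2$ by its value at an endpoint of a sub-interval, one would be forced to take sub-intervals of scaled length $\ll(\log t)^{2/3}/t$, hence $\gg t$ of them, which for small $\vep$ is too many to be compensated by the factor $e^{-\tfrac43\theta^{3/2}}$ produced by the one-point bound.

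The single-interval input I will use is the point-to-point analogue of Lemma~\ref{lemma: itl rt ub}; its proof from the one-point bounds is the content of Lemma~\ref{lemma: itp rt ub } in Section~\ref{s:lpp}. In the form I need it reads: there is $C>0$ such that for all $\theta$ large, all $s\in[0,t]$ and all $N$ large (depending on $t,\theta$),
\[
\P\Big(\max_{s'\in[s,s+1]}\hat T_N(s')\ge\theta\,2^{4/3}N^{1/3}\Big)\le C\max\{1,\theta\}\,e^{-\tfrac43\theta^{3/2}}.
\]
Granting this, fix $\vep>0$, choose $\vep'>0$ with $(1+\vep)^{3/2}(1-\vep')>1$, and put $\gamma:=(1+\vep)^{3/2}(1-\vep')-1>0$. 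Assuming without loss that $t$ is an integer, cover $[0,t]$ by the intervals $J_i=[i-1,i]$, $1\le i\le t$; then
\[
\Big\{\max_{0\le s\le t}\tfrac{\hat T_N(s)}{2^{4/3}N^{1/3}(\log t)^{2/3}}\ge\left(\tfrac34\right)^{2/3}(1+\vep)\Big\}\subset\bigcup_{i=1}^{t}\Big\{\max_{s\in J_i}\tfrac{\hat T_N(s)}{2^{4/3}N^{1/3}(\log t)^{2/3}}\ge\left(\tfrac34\right)^{2/3}(1+\vep)\Big\},
\]
and applying the single-interval estimate with $\theta=\left(\tfrac34\right)^{2/3}(1+\vep)(\log t)^{2/3}$, for which $\tfrac43\theta^{3/2}=(1+\vep)^{3/2}\log t$, bounds each term by $C\max\{1,\theta\}\,e^{-\tfrac43\theta^{3/2}}\le e^{-(1+\vep)^{3/2}(1-\vep')\log t}=t^{-(1+\vep)^{3/2}(1-\vep')}$ once $t$ is large enough to absorb the polylog prefactor. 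Summing over the $t$ intervals gives $t^{1-(1+\vep)^{3/2}(1-\vep')}=t^{-\gamma}$, which is the claim; as in the Airy$_1$ case this last part is routine and need only be stated briefly.

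The main obstacle is thus the single-interval estimate, deferred to Section~\ref{s:lpp}. Its one-point input is Proposition~\ref{l:p2pupper}, applied with the parameter ``$s_0$'' taken to be $t$ itself, so that it holds uniformly for all base points $s\in[0,t]$ provided $N$ is large depending on $t$; this gives $\P(\hat T_N(s)\ge\theta\,2^{4/3}N^{1/3})\le e^{-(\tfrac43-o(1))\theta^{3/2}}$ as $\theta\to\infty$. Upgrading from a single point to the maximum over a unit scaled interval requires a chaining argument over a fine grid in $[s,s+1]$, together with a uniform-in-$s$ local-fluctuation (Hölder-$\tfrac12$-type) control on the increments of $s'\mapsto\hat T_N(s')$; carrying the sharp constant $\tfrac43$ through this chaining, uniformly over $s\le t$ as $t\to\infty$, is the genuine technical content and is exactly the geometric LPP work performed in Section~\ref{s:lpp}.
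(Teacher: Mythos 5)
Your proposal is correct and follows essentially the same route as the paper: keep the parabola inside the maximum (your remark about why endpoint bounds on $s^2$ would force $\gg t$ sub-intervals is exactly the right point), cover $[0,t]$ by $O(t)$ sub-intervals, apply a deparabolized per-interval upper-tail bound with constant close to $\tfrac43$ (uniformly in the base point, using that $N$ is taken large depending on $t$), and finish with a union bound using $(1+\vep)^{3/2}(1-\vep')>1$. The only discrepancy is that the per-interval input you assume (unit-length intervals, sharp constant $\tfrac43$ with polynomial prefactor, proved by chaining) is somewhat stronger than what Lemma \ref{lemma: itp rt ub } actually provides — intervals of small scaled length $\delta(\vep)$ with constant $\tfrac43(1-\vep)$, proved by comparing with a single point slightly beyond the interval rather than by chaining — but your union bound goes through verbatim with the $t/\delta$ such intervals, so this does not affect the validity of the reduction.
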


\begin{theorem}
    \label{t:2maxlb}
    For any $\varepsilon>0$, there is a constant $c>0$ such that
$$\liminf_{N\to \infty}\P\left(\max_{0\le s\le t} \frac{T_N(s)-4N+s^22^{4/3}N^{1/3}}{2^{4/3}N^{1/3}(\log t)^{2/3}}\ge \left(\frac 34 \right)^{2/3}(1-\varepsilon)\right) \geq c $$
uniformly for all $t$ large (depending on $\vep$).
\end{theorem}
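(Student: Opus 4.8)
\emph{Strategy.} The plan is to follow the template of the proof of Theorem~\ref{thm: point to line max limit lower bound}: exhibit, inside the event in question, a union of $\asymp t$ many \emph{independent} events each of probability $\gg t^{-1}$, and conclude by independence. The one new feature in the point-to-point setting is that every geodesic defining $T_N(s)$ starts at the single vertex $\bo 0$, so passage times to distinct endpoints can never be made independent; I would get around this by peeling off a macroscopic initial piece of each geodesic and controlling it separately by a high-probability event.

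\emph{Construction.} Fix $\vep>0$ and put $\theta:=\bigl(\tfrac34\bigr)^{2/3}(1-\vep/2)$. Let $m:=\lfloor N/2\rfloor$ (any fixed fraction of $N$ would do), let $s_1<\dots<s_K$ be $K\asymp t$ equally spaced points in $[0,t]$ (the multiplicative constant chosen below), and for each $i$ let $v_i\in\cl_m$ be the lattice point nearest the point where the straight segment $[\bo 0,u_N(s_i)]$ meets $\cl_m$; explicitly $\psi(v_i)\approx-\tfrac12 s_i(2N)^{2/3}=:-\psi_0^{(i)}$, so consecutive $v_i$ are $\asymp(2N)^{2/3}$ apart on $\cl_m$ and the $u_N(s_i)$ are $\asymp(2N)^{2/3}$ apart on $\cl_{2N}$. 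Choosing $K$ suitably (so that consecutive $v_i$ are at least $(2N)^{2/3}$ apart), I then fix pairwise disjoint parallelograms $U_i\subset\{m\le\phi\le 2N\}$, each of transversal width a small fixed multiple of $(2N)^{2/3}$ and with central line the segment $[v_i,u_N(s_i)]$. Write $\widehat T_i:=\max\{T(\gamma):\gamma\colon v_i\to u_N(s_i),\ \gamma\subset U_i\}$, and let $\nu_i\approx m+\sqrt{m^2-(\psi_0^{(i)})^2}$ and $\mu_i\approx(2N-m)+\sqrt{(2N-m)^2-(2s_i(2N)^{2/3}-\psi_0^{(i)})^2}$ be the limit-shape values of $T_{\bo 0,v_i}$ and $T_{v_i,u_N(s_i)}$; since $v_i$ lies on the straight segment, concavity of the limit shape gives $\nu_i+\mu_i=4N-s_i^2 2^{4/3}N^{1/3}+o(N^{1/3})$. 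The crucial input, to be proved in Section~\ref{s:lpp} (Lemma~\ref{l:restricted}, the point-to-point counterpart of Lemma~\ref{lemma: restriced passage time large}), is the lower bound: for a suitable $\vep'=\vep'(\vep)>0$ and all sufficiently large $N$,
\[
\P\!\left(\widehat T_i\ \ge\ \mu_i+\theta(\log t)^{2/3}\,2^{4/3}N^{1/3}\right)\ \ge\ \tfrac12\exp\!\Bigl(-\bigl(\tfrac43+\vep'\bigr)\bigl(\theta(\log t)^{2/3}\bigr)^{3/2}\Bigr)\ =\ \tfrac12\,t^{-\alpha},
\]
with $\alpha:=(\tfrac43+\vep')\theta^{3/2}=(1+\tfrac34\vep')(1-\vep/2)^{3/2}$. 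Since $(1-\vep/2)^{3/2}<1$, fixing $\vep'$ small forces $\alpha<1$. Calling the event above $A_i$, each $A_i$ is measurable with respect to the weights in $U_i$, hence the $A_i$ are independent and
\[
\P\Bigl(\bigcup_{i=1}^{K}A_i\Bigr)\ \ge\ 1-\Bigl(1-\tfrac12 t^{-\alpha}\Bigr)^{K}\ \ge\ 1-\exp\!\bigl(-\tfrac12 t^{-\alpha}K\bigr),
\]
which tends to $1$ as $t\to\infty$ because $t^{-\alpha}K\asymp t^{1-\alpha}\to\infty$.

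\emph{Gluing back the initial piece.} Concatenating the optimal $\bo 0\to v_i$ path with the $U_i$-restricted geodesic from $v_i$ to $u_N(s_i)$ yields $T_N(s_i)\ge T_{\bo 0,v_i}+\widehat T_i$ (the endpoint-exclusion convention for passage times makes the two pieces fit without double counting). I control all the initial pieces simultaneously by
\[
\mathcal G:=\Bigl\{\,T_{\bo 0,v_i}\ \ge\ \nu_i-C'(\log t)^{1/3}m^{1/3}\ \text{ for every }1\le i\le K\,\Bigr\},
\]
$C'=C'(\vep)$ a large constant: by the one-point lower-tail bound (Proposition~\ref{l:p2plower}, applied on scale $m$ with spatial parameter $\lesssim t$, so that $N$ large suffices) and a union bound over the $K\asymp t$ points, $\P(\mathcal G^c)\le K\,t^{-c_0(C')^3}\to0$ for $C'$ large, where $c_0>0$ is a universal constant. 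On $\mathcal G\cap A_i$, combining $\nu_i+\mu_i=4N-s_i^2 2^{4/3}N^{1/3}+o(N^{1/3})$ with $C'(\log t)^{1/3}m^{1/3}=O((\log t)^{1/3}N^{1/3})$ — both $o\bigl((\log t)^{2/3}N^{1/3}\bigr)$ in the regime $N\to\infty$ then $t\to\infty$ — gives
\[
\frac{T_N(s_i)-4N+s_i^2 2^{4/3}N^{1/3}}{2^{4/3}N^{1/3}(\log t)^{2/3}}\ \ge\ \theta-o(1)\ \ge\ \Bigl(\tfrac34\Bigr)^{2/3}(1-\vep)
\]
for all large $N$ and $t$. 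Hence the event in the statement contains $\mathcal G\cap\bigcup_i A_i$, so its probability is at least $\P(\bigcup_iA_i)-\P(\mathcal G^c)\ge\tfrac12$ for all large $N$ and $t$; taking $\liminf_{N\to\infty}$ and $c:=\tfrac12$ completes the proof modulo Lemma~\ref{l:restricted}.

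\emph{Main obstacle.} Everything hinges on Lemma~\ref{l:restricted}: the \emph{lower} bound on the probability that a point-to-point passage time \emph{restricted to a fat parallelogram} exceeds its limit-shape value by $\theta(\log t)^{2/3}2^{4/3}N^{1/3}$, \emph{with the sharp constant $\tfrac43$ in the exponent}. Sharpness of the constant follows from the lower half of Proposition~\ref{l:p2pupper}; the restriction to the corridor $U_i$ must then be built in at negligible probabilistic cost, which requires quantitative control of the transversal fluctuations of the relevant geodesics conditioned on an anomalously large passage time — this is where the geometry of LPP geodesics genuinely enters, and is the real work of Section~\ref{s:lpp}. The remaining ingredients (the limit-shape computations for $\nu_i,\mu_i$, the two union bounds, and the choices of $K,\vep',C'$) are routine.
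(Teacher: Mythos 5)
Your architecture is the same as the paper's proof of Theorem \ref{t:2maxlb}: split each point-to-point passage time at an intermediate anti-diagonal line, control all the initial pieces simultaneously by one high-probability event built from the one-point lower tail (your $\mathcal G$ plays the role of the paper's event $B$, cf.\ Lemma \ref{l:B}), and manufacture $\asymp t$ independent events from corridor-restricted passage times in disjoint parallelograms, bounded below via the sharp-constant restricted upper-tail estimate (your $A_i$ are the paper's events $C_i$ in Lemma \ref{l:C}, both resting on Lemma \ref{l:restricted}). However, there is a genuine quantitative error in where you place the split. If the split line is $x+y=2\rho N$, the piece from $v_i$ to $u_N(s_i)$ lives at scale $(1-\rho)N$, so the deviation $\theta(\log t)^{2/3}2^{4/3}N^{1/3}$ that you demand equals $\theta(\log t)^{2/3}(1-\rho)^{-1/3}$ in the natural units $2^{4/3}\bigl((1-\rho)N\bigr)^{1/3}$, and the best possible exponent is $\tfrac43\theta^{3/2}(1-\rho)^{-1/2}\log t$. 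Indeed, since $\widehat T_i$ is dominated by the unrestricted passage time, Proposition \ref{l:p2pupper} applied at scale $(1-\rho)N$ shows that your stated ``crucial input'' $\P(A_i)\ge\tfrac12\,t^{-(\frac43+\vep')\theta^{3/2}}$ is simply false for small $\vep$: with your choice $m=\lfloor N/2\rfloor$ (i.e.\ $\rho=1/4$, and the same problem for any fixed fraction, contrary to your parenthetical remark) one gets $\alpha=(4/3)^{1/2}(1-\vep/2)^{3/2}+O(\vep')$, which exceeds $1$ once $\vep\lesssim 0.18$, whence $\P\bigl(\bigcup_iA_i\bigr)\le K t^{-\alpha}\to 0$ and the argument proves nothing in exactly the regime the theorem is about.

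The repair is precisely the parameter choice the paper makes: put the split at $x+y=2\delta N$ with $\delta=\delta(\vep)$ small, so the loss factor $(1-\delta)^{-1/2}$ is absorbed into the room left by $(1-\vep/2)^{3/2}<1$; the initial pieces, now at scale $\delta N$, are even easier to control (the paper bounds each failure probability by $e^{-c(\log t)^2}$, while your $C'(\log t)^{1/3}$-allowance with Proposition \ref{l:p2plower} and a union bound also works). Two smaller points to tighten: Lemma \ref{l:restricted} concerns a strip of width $N^{2/3}$, so either take your corridors of exactly that width (your spacing allows disjointness) or remark that its proof adapts to narrower corridors, since a narrower corridor gives a smaller event and the lemma cannot be cited verbatim; and its application to the pairs $(v_i,u_N(s_i))$ uses translation invariance with spatial parameter of order $t$, which is legitimate only because the lemma is uniform over $s\in[0,S]$ with $N$ large depending on $S$ --- worth saying explicitly, as the paper does through its choice of parameters.
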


\begin{proof}[Proof of Theorem \ref{t:airy2limit}, (i)]
By ergodicity of $\ca_2(\cdot)$ \cite{PSpng02} we have there exist constants $c_1',c_2'$ (possibly infinite) such that 
\[
\limsup_{t \rightarrow \infty} \frac{\max_{0 \leq s \leq t \ca_2(s)}}{(\log t)^{2/3}}=c_1' \text{ a.s. }, \text{ } \liminf_{t \rightarrow \infty} \frac{\max_{0 \leq s \leq t \ca_2(s)}}{(\log t)^{2/3}}=c_2' \text{ a.s. }.
\]
From Theorem \ref{t:lppairy2limit} and Theorem \ref{t:2maxub} we get $c_1' \leq \left( \frac 34 \right)^{2/3}.$ From Theorem \ref{t:lppairy2limit} and Theorem \ref{t:2maxlb} we get $c_2' \geq \left( \frac 34 \right)^{2/3}.$ This completes the proof.
\end{proof}To prove Theorem \ref{t:2maxub} we need the following lemma which we will prove in Section \ref{s:lpp}. Let $I^{m,\delta}$ denote the interval of length $ \lfloor \delta (2N)^{2/3} \rfloor$ on $\cl_{2N}$ with mid point $\bo{N_m}:=(N-\lfloor m(2N)^{2/3} \rfloor,N+ \lfloor m(2N)^{2/3} \rfloor)$.
\begin{lemma}
    \label{lemma: itp rt ub }
    For any $ \vep>0, m_0>0$ there exists $\delta>0$ (depending on $\vep$) such that for all $m$ with $m \in [0,m_0]$ and and for all $t$ sufficiently large (depending on $\vep$) and for all $N$ sufficiently large (depending on $m_0, \vep, t$)
\[
\P \left( \max_{u_N(x) \in I^{m,\delta}  }\left(T_N(x)-4N+2^{4/3}N^{1/3}x^2\right) \geq 2^{4/3}N^{1/3}t\right) \leq e^{-\frac 43 (1-\vep) t^{3/2}}.
\]

\end{lemma}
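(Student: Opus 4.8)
The plan is to reduce the maximum over the interval $I^{m,\delta}$ of scaled length $\delta$ to a single-point estimate (Proposition \ref{l:p2pupper}), at the cost of an absorbed error that becomes negligible as $\delta\to 0$. Fix a reference point, say the midpoint $\bo{N_m}$, corresponding to parameter $m$. For any $u_N(x)\in I^{m,\delta}$ we have $|x-m|\le \delta$, so the recentering term $x^2$ differs from $m^2$ by at most $O(\delta)$; since this is multiplied by $2^{4/3}N^{1/3}$, it contributes an error of order $\delta\cdot 2^{4/3}N^{1/3}$, i.e. $O(\delta)$ on the scaled level. First I would therefore write, for every such $x$,
\[
T_N(x)-4N+2^{4/3}N^{1/3}x^2 \;\le\; T_N(x) - 4N + 2^{4/3}N^{1/3}m^2 + C\delta\,2^{4/3}N^{1/3},
\]
so it suffices to control $\max_{u_N(x)\in I^{m,\delta}}\bigl(T_N(x)-4N+2^{4/3}N^{1/3}m^2\bigr)$ with threshold $2^{4/3}N^{1/3}(t-C\delta)$.

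Next I would bound this maximum by a single passage time $T_{\mathbf 0, w}$ to a carefully chosen point $w$ slightly beyond the interval, using the standard ``ordered-energy'' / path-rerouting trick in exponential LPP: if $w=(N_m^{(1)}+\lfloor\delta(2N)^{2/3}\rfloor,\,N_m^{(2)}+\lfloor\delta(2N)^{2/3}\rfloor)$ — i.e. $w$ lies on $\cl_{2N+2\lfloor\delta(2N)^{2/3}\rfloor}$ on the time-axis through a point to the right of $I^{m,\delta}$ — then every vertex of $I^{m,\delta}$ lies below and to the left of $w$, hence $T_N(x)\le T_{\mathbf 0,w}$ for all $x$ with $u_N(x)\in I^{m,\delta}$, up to the weight of a short deterministic piece of path joining $u_N(x)$ to $w$. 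The weight of that extra piece, of unscaled length $O(\delta N^{2/3})$ in the transversal and longitudinal directions, is with overwhelming probability at most $O(\delta^{1/2})\cdot 2^{4/3}N^{1/3}$ on the scaled level (the endpoint sits at scaled spatial displacement $O(\delta)$ from $u_N(x)$, contributing a parabolic loss $O(\delta^2)$ to the mean plus fluctuation $O(\delta^{1/2})$); alternatively one can simply take $w$ to be the point-to-line endpoint and invoke Lemma \ref{lemma: itl rt ub} after noting $T_N(x)\le T_{\mathbf 0, x}\le \max_{y\in I^{m,\delta}} T_{\cl_0,y}$ is \emph{false} in general, so the rerouting to a single point $w$ is the honest route. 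In either realization one arrives at
\[
\P\Bigl(\max_{u_N(x)\in I^{m,\delta}}\bigl(T_N(x)-4N+2^{4/3}N^{1/3}x^2\bigr)\ge 2^{4/3}N^{1/3}t\Bigr) \;\le\; \P\bigl(T_{\mathbf 0,w}\ge 4N - 2^{4/3}N^{1/3}m_w^2 + 2^{4/3}N^{1/3}(t-C\sqrt\delta)\bigr) + (\text{super-polynomially small error}),
\]
where $m_w$ is the scaled spatial coordinate of $w$, with $|m_w|\le m_0+1\le$ a fixed constant; since $w$ is still on the near-diagonal scale, Proposition \ref{l:p2pupper} applies (the constant $\tfrac43$ in the upper-tail rate is uniform over $s\in[0,s_0]$).

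Applying Proposition \ref{l:p2pupper} with $\vep/2$ in place of $\vep$ and $s=m_w\le m_0+1=:s_0$ gives, for $t$ large enough and $N$ large enough, an upper bound of $\exp\bigl(-(\tfrac43-\tfrac\vep2)(t-C\sqrt\delta)^{3/2}\bigr)$. Now I would choose $\delta=\delta(\vep)>0$ small enough that $(\tfrac43-\tfrac\vep2)(t-C\sqrt\delta)^{3/2}\ge \tfrac43(1-\vep)t^{3/2}$ for all large $t$ — which is possible since $(t-C\sqrt\delta)^{3/2}/t^{3/2}\to 1$ as $t\to\infty$ and $(\tfrac43-\tfrac\vep2)/\tfrac43 = 1-\tfrac{3\vep}{8}>1-\vep$ — and absorb the super-polynomially small additive error into the same bound. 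This yields the claimed estimate $e^{-\frac43(1-\vep)t^{3/2}}$, with $\delta$ depending only on $\vep$ and the various ``sufficiently large'' requirements on $t,N$ as stated.

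The main obstacle is the second step: making rigorous the claim that the maximum over an $O(\delta)$-scaled interval of endpoints is controlled, up to an $O(\sqrt\delta)$ scaled loss, by a single point-to-point time. This requires either (i) a transversal-fluctuation / modulus-of-continuity estimate for the LPP profile $x\mapsto T_N(x)$ on unit scale — of the type that the profile is $\tfrac13^-$-Hölder with the right tail exponents, which in turn can be bootstrapped from Proposition \ref{l:p2pupper} together with a chaining argument over $O(\delta^{-1})$ dyadic scales inside the interval — or (ii) a direct path-surgery argument bounding $T_N(x)-T_{\mathbf 0,w}$ by the (easily controlled, e.g. via \cite[Theorem 2.3]{BBF22} or elementary Exp(1) concentration) weight of a short corner path. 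Option (ii) is cleaner and is the one I would pursue; the chaining in option (i) is where one would otherwise spend most of the effort, and one must be careful that the number of scales, $\log(1/\delta)$, does not degrade the exponent — it does not, since each scale contributes a stretched-exponential tail and $\delta$ is a fixed function of $\vep$ chosen after the exponent budget is fixed.
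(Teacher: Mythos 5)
Your high-level strategy -- reduce the maximum over $I^{m,\delta}$ to a single point-to-point upper tail via superadditivity, paying for the connecting bridges -- is in spirit the paper's strategy, but the concrete construction you commit to (your option (ii)) has a scale mismatch that breaks the proof. You place the auxiliary point $w$ only $O(\delta N^{2/3})$ beyond $\cl_{2N}$. Then the naive domination $T_N(x)\le T_{\bo{0},w}$ is useless: $\E T_{\bo{0},w}\approx 4N+4\delta(2N)^{2/3}-m^2 2^{4/3}N^{1/3}$ exceeds the target threshold by order $\delta N^{2/3}\gg t N^{1/3}$, so the event you would compare to has probability close to $1$. The corrected version $T_N(x)\le T_{\bo{0},w}-T_{u_N(x),w}$ fares no better: writing $x=m+h$ with $|h|\le \delta/2$, the bridge from $u_N(x)$ to $w$ has longitudinal length $r=\delta(2N)^{2/3}$ comparable to its transversal displacement $h(2N)^{2/3}$, so the mean defect $\E T_{\bo{0},w}-\E T_{\bo{0},u_N(x)}-\E T_{u_N(x),w}\asymp h^2(2N)^{4/3}/r\asymp \delta N^{2/3}$ for $h$ of order $\delta$, i.e.\ of order $\delta N^{1/3}$ in scaled units, which diverges with $N$; it is not the $O(\delta^2)$ parabolic loss you claim (that estimate is only valid when the bridge has macroscopic length of order $N$). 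Your $O(\delta^{1/2})$ fluctuation bound for the bridge is tied to the same wrong scale, though there the error is harmless; the mean defect is what kills the argument. Incidentally, your reason for dismissing the line-to-point route is also incorrect: $T_{\bo{0},u}\le T_{\cl_0,u}$ always holds since $\bo{0}\in\cl_0$; the genuine obstruction to that shortcut is that absorbing the recentering term $x^2 2^{4/3}N^{1/3}$ into $t$ costs an additive $(m_0+1)^2$, which is not affordable uniformly in $m\in[0,m_0]$ (in the paper's application of this lemma $m_0$ grows with $t$).

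The fix, and what the paper actually does, is to decouple the two scales: take the auxiliary point at macroscopic distance beyond the line, namely $x_\mu\approx(1+\mu)\bo{N_m}$ on $\cl_{2(1+\mu)N}$, and tie the interval width to it by $\delta=\mu^{2/3}$. Because $x_\mu$ lies on the ray through $\bo{N_m}$, the defect becomes $\approx 2^{4/3}N^{1/3}\,h^2(1+\mu)/\mu\le C\mu^{1/3}N^{1/3}$ uniformly in $m$, and is absorbed by taking $t$ large; the bridges from $I^{m,\delta}$ to $x_\mu$ now have length of order $\mu N$, so a downward fluctuation of $\vep' t\,2^{4/3}N^{1/3}$ for their minimum costs $e^{-c\vep'^3 t^3/\mu}$ by \cite[Theorem 4.2, (i)]{BGZ21}, negligible against $e^{-\frac{4}{3}t^{3/2}}$; and the main event $\{T_{\bo{0},x_\mu}\ge \E T_{\bo{0},x_\mu}+(1-2\vep')t2^{4/3}N^{1/3}\}$ costs $e^{-\frac{4}{3}(1-O(\vep'))t^{3/2}/\sqrt{1+\mu}}$ by Proposition \ref{l:p2pupper}, from which the stated bound follows by choosing $\vep'$ and $\mu$ small in terms of $\vep$. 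Your chaining alternative (option (i)) is not developed enough to assess, so as written the proposal has a genuine gap at its central reduction step.
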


\begin{proof}[Proof of Theorem \ref{t:2maxub}]

The proof is similar to the proof of Theorem \ref{thm: point to line min limit upper bound}. Without loss of generality we divide the interval $[0,t]$ into $ \approx \frac t \delta$ many intervals ($\delta$ will be chosen later) each of length $\delta$. Let $I^\delta_i$ denote the interval of length $ \approx \lfloor \delta (2N)^{2/3}\rfloor$ with end points $(N-\lfloor (i-1) \delta (2N)^{2/3} \rfloor,N+\lfloor (i-1) \delta (2N)^{2/3} \rfloor)$ and $(N-\lfloor i \delta (2N)^{2/3} \rfloor,N+\lfloor i \delta (2N)^{2/3} \rfloor).$ Let us define the following events. 
    \[
    A_i:=\Big \{ \max_{u_N(x) \in I_i^{\delta}} \frac{T_N(x)-4N+x^22^{4/3}N^{1/3}}{2^{4/3}N^{1/3}(\log t)^{2/3}} \geq \left( \frac 34 \right)^{2/3}(1+\vep) \Big \}.
    \]
    We have
        \[\Big \{ \max_{0 \leq s \leq t} \frac{T_N(s)-4N+s^22^{4/3}N^{1/3}}{2^{4/3}N^{1/3}(\log t)^{2/3}} \geq \left (\frac 34 \right)^{2/3}(1+\vep) \Big \} \\ \subset \bigcup_{i=1}^{\frac t\delta}A_i.\]
        We look at each event on the right hand side. 
       
Observe that in Lemma \ref{lemma: itp rt ub } if we chose $\delta$ corresponding to $\vep_1$ (where $\vep_1$ is chosen such that $(1+\vep)^{3/2}(1-\vep_1)>1$) then we have for $t$ sufficiently large (depending on $\vep$) and for sufficiently large $N$ (depending on $t, \vep$) 
\[
\P \left( \max_{u_N(x) \in I_i^{\delta}} \frac{T_N(x)-4N+x^2 2^{4/3} N^{1/3}}{2^{4/3}N^{1/3}(\log t)^{2/3}} \geq \left( \frac 34 \right)^{2/3}(1+\vep) \right) \leq Ce^{-(1+\vep)^{3/2}(1-\vep_1)}.
\]
So, taking a union bound we get
\[
\P \left( \max_{0 \leq s \leq t} \frac{T_N(s)-4N+s^22^{4/3}N^{1/3}}{2^{4/3}N^{1/3}(\log t)^{2/3}} \geq \left (\frac 34 \right)^{2/3}(1+\vep) \log t\right) \leq C\frac t\delta e^{-(1+\vep)^{3/2}(1-\vep_1) \log t}.
\]
By choice of $\vep_1,$ the above inequality completes the proof.
\end{proof}

\subsection{Proof of Theorem \ref{t:2maxlb}}
Let $\varepsilon>0$ be fixed. Let us choose $\delta>0$ to be fixed sufficiently small depending on  $\varepsilon$. Let $s_i=it^{\delta}$ and let 
$u_{i}=(N-\lfloor s_i(2N)^{2/3} \rfloor, N+\lfloor s_i(2N)^{2/3} \rfloor)$ for $i=0,1,\ldots t^{1-\delta}$. Clearly it suffices to prove that 

\begin{equation}
    \label{eq:2lbsuff}
    \liminf_{N\to \infty}\P\left(\max_{0\le i\le t^{1-\delta}} \frac{T_N(s_i)-4N+s_i^22^{4/3}N^{1/3}}{2^{4/3}N^{1/3}}\ge \left (\frac 34 \right)^{2/3}(1-\varepsilon)(\log t)^{2/3} \right)> 0
\end{equation}
uniformly for all $t$ large.

Call the event above $A$. We shall construct independent events $B$ and $C$ with $B\cap C \subset A$. 

Without loss of generality let us assume the $\delta N$ is an integer. Let $v_i=(\delta N-\lfloor \delta s_i(2N)^{2/3} \rfloor, \delta N+\lfloor \delta s_i(2N)^{2/3}\rfloor)$. Let us define the event $B$ by 

$$B=\left\{{T_{\mathbf{0},v_i}-4\delta N + \delta^{2/3}s_i^{2}2^{4/3}(\delta N)^{1/3}}\ge -\left (\frac 34 \right)^{2/3}\frac{\varepsilon}{2}2^{4/3}(\log t)^{2/3} N^{1/3}~~\forall i\right\}.$$

We define the event $C$ by 

$$C=\left\{\exists i~~T_{v_i,u_{i}}-4(1-\delta) N +(1-\delta)^{2/3}s_i^22^{4/3}((1-\delta)N)^{1/3} \ge \left (\frac{3}{4}\right)^{2/3}(1-\frac \vep2)2^{4/3}(\log t)^{2/3} 2^{4/3}N^{1/3}\right\}.$$
\begin{figure}[t!]
     \centering
     \includegraphics[width=8 cm]{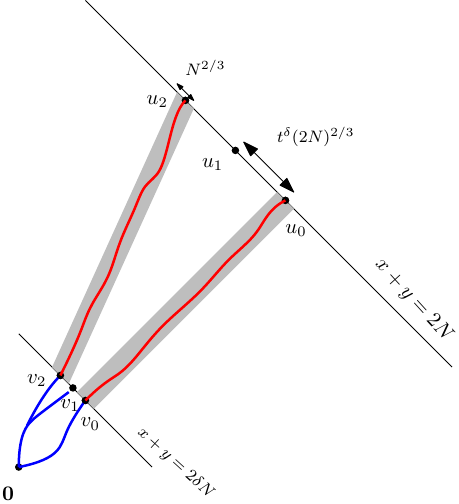}
    \caption{To prove Theorem \ref{t:2maxlb} we consider the points $u_i$ on $x+y=2N$ and $v_i$ on $x+y=2 \delta N$. The passage time from to $\bo{0}$ to some $u_i$ will be large if passage time from $\bo{0}$ to $v_i$ is not too small for all $i$ (this is the event $B$) and passage time from $u_i$ to $v_i$ is large for some $i$ (this is the event $C$). $B$ and $C$ are independent events and we show both of them are positive probability events (Lemma \ref{l:B} and Lemma \ref{l:C}). To find the probability of $C$ we construct independent events $C_i$ where we look at the passage time restricted within disjoint parallelograms and find estimate for the probability of restricted passage time being large (Lemma \ref{l:restricted}). The disjoint parallelograms are shaded in grey in the figure.} 
    \label{fig: A2max} 
     \end{figure}

Since $T_{\mathbf{0},u_i}\ge T_{\bo{0},v_i}+T_{u_i, v_i}$ it follows that for $\varepsilon$ sufficiently small, we have $B\cap C \subset A$ (see Figure \ref{fig: A2max}). 

Since $B$ and $C$ are independent \eqref{eq:2lbsuff} follows from the next two lemmas.  

\begin{lemma}
    \label{l:B}
    We have $\liminf_{N\to \infty}\P(B)>0$
    uniformly in all $t$ large. 
\end{lemma}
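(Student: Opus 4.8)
The statement to prove is Lemma \ref{l:B}: $\liminf_{N\to\infty}\P(B)>0$ uniformly for all large $t$, where
\[
B=\left\{T_{\mathbf{0},v_i}-4\delta N + \delta^{2/3}s_i^{2}2^{4/3}(\delta N)^{1/3}\ge -\left(\tfrac34\right)^{2/3}\tfrac{\varepsilon}{2}2^{4/3}(\log t)^{2/3} N^{1/3}\ \ \forall i\right\}
\]
and $v_i=(\delta N-\lfloor\delta s_i(2N)^{2/3}\rfloor,\delta N+\lfloor\delta s_i(2N)^{2/3}\rfloor)$, $s_i=it^\delta$, $i=0,\dots,t^{1-\delta}$.

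\textbf{Approach.} The plan is to pass to the complement and show $\P(B^c)$ is bounded away from $1$ (in fact small) uniformly in $t$ and $N$. The event $B^c$ is the union over $i$ of the events that the point-to-point passage time $T_{\mathbf{0},v_i}$ (which is a passage time across $\delta N$ rows to a point in the direction $s_i$, scaled appropriately) dips below its typical value $4\delta N-\delta^{2/3}s_i^2 2^{4/3}(\delta N)^{1/3}$ by at least a constant multiple of $(\log t)^{2/3}N^{1/3}$ units. The key point is that the scale of fluctuations here is $(\delta N)^{1/3}\asymp N^{1/3}$, so the deviation we must rule out, measured in fluctuation units, is of order $(\log t)^{2/3}$, i.e.\ we are in the lower-tail moderate deviation regime with deviation parameter $r\asymp\delta^{-1/3}(\log t)^{2/3}$. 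By the lower-tail estimate Proposition \ref{l:p2plower} (applied at scale $\delta N$ with spatial parameter $\delta^{1/3}s_i$, which ranges over $[0, O(t^{1-\delta}\cdot t^\delta\delta^{1/3})]=[0,O(\delta^{1/3}t)]$ — bounded by $s_0$ after we note $s_i\delta^{1/3}\le \delta^{1/3}t$; we actually need $\delta^{1/3}s_i$ bounded, which requires care since $s_i$ can be as large as $t$), each such event has probability at most $\exp(-(\tfrac{1}{12}-\vep')r^3)$ with $r^3\asymp\delta^{-1}(\log t)^{2}$, hence at most $t^{-c/\delta}$ for a constant $c>0$. A union bound over the $\le t^{1-\delta}+1$ values of $i$ then gives $\P(B^c)\le 2t^{1-\delta}t^{-c/\delta}=2t^{1-\delta-c/\delta}$, which tends to $0$ as $t\to\infty$ once $\delta$ is small enough that $c/\delta>1$. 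This is uniform in $N$ (for $N$ large depending on $t$), giving $\liminf_N\P(B)\ge 1-2t^{1-\delta-c/\delta}\ge \tfrac12$ for $t$ large.

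\textbf{Steps, in order.} First, I would rewrite the displayed constraint for a fixed $i$ in the normalized form of Proposition \ref{l:p2plower}: setting $M=\delta N$, $x=\delta^{1/3}s_i$ and $u_M(x)=v_i$, the event $\{T_{\mathbf{0},v_i}<4M-x^2 2^{4/3}M^{1/3}-\tau 2^{4/3}M^{1/3}\}$ with $\tau = (\tfrac34)^{2/3}\tfrac{\varepsilon}{2}(\log t)^{2/3}\cdot(N/M)^{1/3}=(\tfrac34)^{2/3}\tfrac{\varepsilon}{2}\delta^{-1/3}(\log t)^{2/3}$; one checks the algebra matches because $\delta^{2/3}s_i^2(\delta N)^{1/3}=\delta s_i^2 N^{1/3}\cdot\delta^{-1/3}\cdot\delta^{1/3}=x^2 M^{1/3}$ — I will lay this out carefully as it is the one genuinely fiddly computation. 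Second, I must address the range of $x=\delta^{1/3}s_i$: since $s_i\le t^{1-\delta}\cdot t^\delta=t\cdot t^{-\delta}\cdot t^\delta$... wait, $s_i = i t^\delta \le t^{1-\delta}\cdot t^\delta = t$, so $x\le\delta^{1/3}t$, which is \emph{not} bounded as $t\to\infty$. This is the main obstacle (see below); I resolve it by noting that the parabolic curvature works in our favor — actually the cleanest fix is that $v_i$ lies on $\cl_{2\delta N}$ and the direction parameter for the scale-$\delta N$ problem really is $\delta^{1/3}s_i$, but one should instead apply the estimate with the roles arranged so the spatial parameter stays bounded; alternatively, invoke a version of Proposition \ref{l:p2plower} valid for $s$ growing slowly (sub-polynomially) with the scale, or restrict attention to the regime $t^\delta$ chosen so that... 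Concretely, I would cite or quickly derive the needed uniformity: for the lower tail, the constant $\tfrac{1}{12}$ persists (indeed improves) as $s$ grows, because the point $v_i$ is still in the bulk as long as $\delta^{1/3}s_i\ll (\delta N)^{1/3}\cdot$const, i.e.\ $s_i\ll N^{1/3}$, which holds since $s_i\le t\ll N^{1/3}$ in our asymptotic regime $N\to\infty$ first. Third, apply the one-point bound to get $\P(T_{\mathbf{0},v_i}<\cdots)\le\exp(-(\tfrac{1}{12}-\vep')\tau^3)=\exp(-(\tfrac{1}{12}-\vep')(\tfrac34)^{2}(\tfrac{\varepsilon}{2})^3\delta^{-1}(\log t)^{2})=t^{-c_\varepsilon/\delta}$ with $c_\varepsilon>0$ absorbing the log into a power. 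Fourth, union bound over $i\in\{0,\dots,t^{1-\delta}\}$: $\P(B^c)\le(t^{1-\delta}+1)t^{-c_\varepsilon/\delta}$. Fifth, choose $\delta=\delta(\varepsilon)$ small enough that $c_\varepsilon/\delta - (1-\delta)>1$; then $\P(B^c)\to 0$ as $t\to\infty$, uniformly in $N\to\infty$, so $\liminf_{N\to\infty}\P(B)\ge\tfrac12$ for all $t$ large.

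\textbf{Main obstacle.} The delicate point is precisely the uniformity of the lower-tail estimate over the growing spatial parameter $x=\delta^{1/3}s_i$ up to $\delta^{1/3}t$: Proposition \ref{l:p2plower} as stated fixes $s_0$ and requires $s\in[0,s_0]$, whereas here the relevant parameter can be as large as $\asymp t$. The resolution is that in the iterated-limit regime ($N\to\infty$ for fixed $t$, then $t\to\infty$), one has $s_i\le t\ll N^{1/3}$, and the curvature of the limit shape keeps $v_i$ well inside the region where the $\tfrac{1}{12}$ lower-tail rate is valid (and in fact the true rate only gets larger, never smaller, away from the diagonal direction, so the bound $\exp(-(\tfrac{1}{12}-\vep')\tau^3)$ is conservative). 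I would make this precise either by quoting the appropriate uniform-in-$s$ version of the lower tail bound from \cite{BBBK24} or \cite{BBF22}, or by a short comparison argument (e.g.\ monotonicity in the endpoint and a transversal fluctuation bound confining $\Gamma_{\mathbf{0},v_i}$). Everything else — the normalization bookkeeping, the union bound, and the choice of $\delta$ — is routine.
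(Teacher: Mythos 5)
Your proposal is correct and takes essentially the same route as the paper: identify $v_i=u_{\delta N}(\delta^{1/3}s_i)$, observe that the required deviation is of order $\delta^{-1/3}(\log t)^{2/3}$ in fluctuation units at scale $\delta N$, and apply the sharp lower-tail estimate of \cite{BBBK24} so that each per-$i$ failure probability is $e^{-c(\log t)^2}$ (superpolynomially small), which easily beats the $t^{1-\delta}$ many indices. The only cosmetic differences are that the paper combines the events via FKG and a product bound rather than your union bound on $B^c$, and it resolves your ``main obstacle'' exactly as you suggest, by invoking the general-$(m,n)$ form of the lower-tail bound \cite[Theorem 1.4, (ii)]{BBBK24} (valid whenever $m/n\to 1$, which holds here since $s_i\le t\ll N^{1/3}$ in the iterated limit $N\to\infty$ first), so there is no genuine gap.
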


\begin{proof}
    For $1 \leq i \leq t^{1-\delta}$, let us define the following events. 
    \begin{itemize}
        \item $B_i:=\big \{ {T_{\mathbf{0},v_i}-4\delta N + \delta^{2/3}s_i^{2}2^{4/3}(\delta N)^{1/3}}\ge -2^{4/3}\frac \varepsilon2(\log t)^{2/3} N^{1/3}\big \}.$
    \end{itemize}
    Then we have 
    \[
    B=\bigcap_{i=1}^{t^{1-\delta}} B_i.
    \]
Since, each $B_i$ is an increasing events, we have 
\[
\P(B) \geq \prod_{i=1}^{t^{1-\delta}}\P(B_i).
\]
We look at $\P(B_i^c).$ By \cite[Theorem 1.4, (ii)]{BBBK24} we have for any $\vep>0$ and for all $(m,n)$ with $\frac mn$ going to $1$ and $m,n$ sufficiently large and $t$ sufficiently large
\begin{equation}
    \label{eq: lower tail upper bound ptp}
    \P\left(T_{\mathbf{0},(m,n)}-(\sqrt{m}+\sqrt{n})^2\le -t(\sqrt{m}+\sqrt{n})^{4/3}(\sqrt{mn})^{-1/3}\right)\le e^{-\frac{1}{12}(1-\varepsilon)t^{3}}.
\end{equation}
Now, if $v_i=(v_{i,1},v_{i,2})$ then a Taylor series expansion shows for $N$ sufficiently large (depending on $t$)
\[
\left( \sqrt{v_{i,1}}+\sqrt{v_{i,2}}\right)^2=4\delta N-s_i^2\delta^{2/3}2^{4/3}(\delta N)^{1/3}+O(\delta N^{-1/3}).
\]
So, we can choose $N$ sufficiently large depending on $\vep, t$ such that 
\[
B_i^c \subset \left \{ {T_{\mathbf{0},v_i}-\left( \sqrt{v_{i,1}}+\sqrt{v_{i,2}}\right)^2}\le -\frac{\varepsilon}{4}2^{4/3}(\log t)^{2/3} N^{1/3}\right \}.
\]
Further, we can choose $N$ sufficiently large (depending on $t$ and $\vep$) such that 
\[
\left( \sqrt{v_{i,1}}+\sqrt{v_{i,2}}\right)^{4/3}\left( \sqrt{v_{i,1}v_{i,2}}\right)^{-1/3} \leq 2^{4/3}(1+\vep)(\delta N)^{1/3}.
\]
Combining all the above, and by \eqref{eq: lower tail upper bound ptp} we get for some constant $c>0,$ 
\[
\P(B_i^c) \leq e^{-c (\log t)^2}.
\]
So, we have 
\[
\P(B) \geq \left (1-e^{-c (\log t)^2} \right)^{t^{1-\delta}}.
\]
Now, as $e^{-c(\log t)^2} \leq \frac{1}{t^{1-\delta}}$
, for $t$ sufficiently large and $N$ sufficiently large (depending on $t$)
\[
\P(B) \geq \left(1-\frac{1}{t^{1-\delta}} \right)^{t^{1-\delta}} \geq c.
\]
This completes the proof.
\end{proof}

\begin{lemma}
    \label{l:C}
    We have $\liminf_{N\to \infty}\P(C)>0$
    uniformly in all $t$ large (possibly depending on $\delta$). 
\end{lemma}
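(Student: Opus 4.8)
The plan is to realize the event $C$ as containing a union of $t^{1-\delta}$ events $C_i$ that are \emph{independent} of one another (because they depend on disjoint regions of the lattice), each of probability $\gtrsim t^{-(1-\vep')}$ for a suitable small $\vep'>0$, and then conclude by the standard independence computation $\P(C)\ge \P(\bigcup_i C_i) = 1-\prod_i(1-\P(C_i)) \ge 1-(1-t^{-(1-\vep')})^{t^{1-\delta}}$, whose right-hand side is bounded below by a positive constant for $t$ large once $\delta$ is chosen small enough relative to $\vep'$ (so that $t^{1-\delta}\cdot t^{-(1-\vep')}\to\infty$). So the whole lemma reduces to producing, for each $i$, an event $C_i$ that (a) implies the $i$-th disjunct in the definition of $C$, namely $T_{v_i,u_i}-4(1-\delta)N+(1-\delta)^{2/3}s_i^2 2^{4/3}((1-\delta)N)^{1/3}\ge (\tfrac34)^{2/3}(1-\tfrac\vep2)2^{8/3}(\log t)^{2/3}N^{1/3}$, (b) is measurable with respect to the weights in a parallelogram $U_i$ whose $U_i$ are pairwise disjoint, and (c) has probability at least $e^{-(\tfrac43+\vep')(\log t)\cdot(\text{const})}$-type lower bound matching the one-point lower tail of Proposition \ref{l:p2pupper}.

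For step (a)--(c) I would take $C_i$ to be exactly the ``restricted passage time large'' event: fix disjoint parallelograms $U_i$ with transversal width $\asymp t^{\delta}N^{2/3}$ around the segment joining $v_i$ to $u_i$ (these are the grey parallelograms of Figure \ref{fig: A2max}), set $T_N^{**}(v_i,u_i):=\max\{T(\gamma): \gamma:v_i\to u_i,\ \gamma\subset U_i\}$ and let $C_i:=\{T_N^{**}(v_i,u_i)-4(1-\delta)N+(1-\delta)^{2/3}s_i^2 2^{4/3}((1-\delta)N)^{1/3}\ge (\tfrac34)^{2/3}(1-\tfrac\vep2)2^{8/3}(\log t)^{2/3}N^{1/3}\}$. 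Since $T_{v_i,u_i}\ge T_N^{**}(v_i,u_i)$, property (a) is immediate, and disjointness of the $U_i$ gives (b) and hence independence. For (c) I would invoke Lemma \ref{l:restricted} (the point-to-point analogue of Lemma \ref{lemma: restriced passage time large}), which is precisely the statement that such a restricted point-to-point passage time across a parallelogram of width of order its transversal fluctuation scale has an upper-tail lower bound of the same order $e^{-(\tfrac43+\vep')r^{3/2}}$ as the unrestricted one-point bound; here $r\asymp (\tfrac34)^{2/3}(1-\tfrac\vep2)(\log t)^{2/3}$, so $r^{3/2}\asymp \tfrac34(1-\tfrac\vep2)^{3/2}\log t$ and $(\tfrac43+\vep')r^{3/2}$ is $(1-\vep'')\log t$ for a small $\vep''$, giving $\P(C_i)\ge \tfrac12 t^{-(1-\vep'')}$ as needed. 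A minor technical point is the shift of endpoints from $\mathbf{0}$ to $v_i$: by translation invariance of the i.i.d.\ weights, $T_{v_i,u_i}$ has the law of a point-to-point time between $\mathbf 0$ and $u_i-v_i=((1-\delta)N-\lfloor (1-\delta)s_i(2N)^{2/3}\rfloor,\ldots)$, which is of the form $u_M(\tilde s)$ with $M=(1-\delta)N\to\infty$ and $\tilde s=s_i/\text{(scaling)}\in[0, \text{const}\cdot t]$, so the hypotheses of Lemma \ref{l:restricted}/Proposition \ref{l:p2pupper} apply uniformly once we note $s_i\le t$ and rescale; I would spell out this identification and a one-line Taylor expansion to match the deterministic centering terms exactly, as was done for $B$ above.

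The main obstacle is the correct \emph{bookkeeping of constants and scales} in step (c): one must check that the required excess $r$ (after dividing out the $2^{4/3}N^{1/3}$ factors and absorbing the $(1-\delta)^{2/3}$, $\delta$-dependent corrections via Taylor expansion, exactly as in the proof of Lemma \ref{l:B}) really is $(\tfrac34)^{2/3}(1-\tfrac\vep2)(\log t)^{2/3}(1+o(1))$ and not something with a worse constant, so that $r^{3/2}$ comes out as $\tfrac34\log t$ times a factor strictly less than $1$; this is what makes $\P(C_i)\gg t^{-1}$ and hence the product $\to$ a positive constant. The geometric input that the restriction to $U_i$ costs only a constant factor (not a polynomial factor) in this probability is exactly Lemma \ref{l:restricted}, which I am permitted to assume; its proof (deferred to Section \ref{s:lpp}) is where the real LPP geometry lives, but for the present lemma it is a black box. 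Everything else --- disjointness, independence, the $1-(1-p)^{n}$ estimate, and the choice of $\delta$ small enough that $t^{1-\delta}p\to\infty$ --- is routine and parallels the proof of Lemma \ref{l:B} and of Theorem \ref{thm: point to line max limit lower bound}.
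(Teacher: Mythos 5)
Your proposal is correct and follows essentially the same route as the paper's proof: restricted passage times from $v_i$ to $u_i$ inside pairwise disjoint strips around the segments joining $v_i$ and $u_i$, independence coming from disjointness, the one-point restricted upper-tail lower bound of Lemma \ref{l:restricted} (after the same translation-invariance and Taylor-expansion bookkeeping) giving $\P(C_i)\ge t^{-1+o(1)}$, and then the $1-(1-p)^{t^{1-\delta}}$ computation. The only (cosmetic) difference is that the paper takes the strips $P_i$ of width $N^{2/3}$, exactly the setting of Lemma \ref{l:restricted}, whereas with your width $\asymp t^{\delta}N^{2/3}$ you must choose the implicit constant of order $\delta$ so that the parallelograms remain disjoint near the line $x+y=2\delta N$, where consecutive $v_i$ are only $\delta t^{\delta}(2N)^{2/3}$ apart.
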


For the proof of Lemma \ref{l:C} we need the following result. For $u=u_N(s)$ as before, let $\widetilde{T}_{\mathbf{0},u}$ denote the weight of the maximum weight path from $\mathbf{0}$ to $u$ contained in the parallelogram $P(u)$ with corners $\pm(N^{2/3},-N^{2/3}), u\pm (N^{2/3},-N^{2/3})$ (i.e., a strip of width $N^{2/3}$ around the line segment joining $\mathbf{0}$ and $u$). We have the following lemma.

\begin{lemma}
    \label{l:restricted}
    Given $\varepsilon>0$, there exists $T_0>0$ such that any $S,T>0$ with $T>T_0$, for all $N>N_0(S,T,\varepsilon)$ and all $s\in [0,S]$ and all $t\in [T_0,T]$ we have 
    $$\P\left(\widetilde{T}_{\mathbf{0},u}\ge 4N-s^22^{4/3}N^{1/3}+t2^{4/3}N^{1/3}\right)\ge e^{-(\frac{4}{3}+\varepsilon)t^{3/2}}.$$
\end{lemma}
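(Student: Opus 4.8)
The plan is to prove Lemma \ref{l:restricted} by comparing the restricted passage time $\widetilde{T}_{\mathbf{0},u}$ to the unrestricted one $T_N(s) = T_{\mathbf{0},u}$, whose lower bound on the upper tail is already given by Proposition \ref{l:p2pupper}. The point is that the event $\{T_{\mathbf{0},u}\ge 4N-s^22^{4/3}N^{1/3}+t2^{4/3}N^{1/3}\}$ has probability at least $\exp(-(\frac43+\varepsilon')t^{3/2})$, and conditionally on this event one expects the geodesic $\Gamma_{\mathbf{0},u}$ to stay inside the strip $P(u)$ of width $N^{2/3}$ with overwhelming probability, so the restricted and unrestricted passage times agree on most of this event. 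More precisely, I would write
\[
\P\!\left(\widetilde{T}_{\mathbf{0},u}\ge a\right)\ge \P\!\left(T_{\mathbf{0},u}\ge a\right)-\P\!\left(T_{\mathbf{0},u}\ge a,\ \Gamma_{\mathbf{0},u}\not\subset P(u)\right),
\]
where $a = 4N-s^22^{4/3}N^{1/3}+t2^{4/3}N^{1/3}$, and bound the second term so that it is at most, say, half of the first.

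First I would record the transversal fluctuation estimate for point-to-point geodesics: the probability that $\Gamma_{\mathbf{0},u}$ exits a strip of width $N^{2/3}$ around the diagonal line segment is superpolynomially (indeed stretched-exponentially) small in $N$, uniformly for $s$ in a compact set — this is a standard input (e.g.\ the moderate deviation/transversal fluctuation results used throughout; cf.\ \cite{BGZ21,BBF22}). Since $t$ ranges over the \emph{bounded} interval $[T_0,T]$, the target $a$ exceeds $4N$ by only $O(N^{1/3})$, so $\P(T_{\mathbf{0},u}\ge a)\ge \exp(-(\frac43+\varepsilon')t^{3/2})$ is bounded below by a quantity that decays only polynomially... no, actually it is bounded below by $\exp(-C T^{3/2})$, a fixed positive constant depending only on $T$ and $\varepsilon'$, whereas the geodesic-exit probability is $o(N^{-K})$ for every $K$. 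Hence for $N$ large (depending on $S,T,\varepsilon$) the correction term is negligible compared to the main term. Choosing $\varepsilon'$ slightly smaller than $\varepsilon$ and absorbing the factor $1/2$ into the exponent (using $t\ge T_0$ large so that $\frac12 e^{-(\frac43+\varepsilon')t^{3/2}}\ge e^{-(\frac43+\varepsilon)t^{3/2}}$) gives the claim.

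One technical point needs care: the strip $P(u)$ in the statement has corners $\pm(N^{2/3},-N^{2/3})$ and $u\pm(N^{2/3},-N^{2/3})$, i.e.\ it is a parallelogram of a fixed width (not scaling with the distance), and one must make sure the transversal fluctuation bound is applied in the right parametrization — the geodesic from $\mathbf{0}$ to $u_N(s)$ has transversal fluctuations of order $N^{2/3}$, and the constant in front can be made as large as we like at the cost of the (still superpolynomially small) probability, so a width of exactly $2N^{2/3}$ on each side suffices once $N$ is large. I also need uniformity over $s\in[0,S]$ and $t\in[T_0,T]$; both the one-point lower bound (Proposition \ref{l:p2pupper}) and the transversal fluctuation estimate are uniform over such compact ranges, so this is not an obstacle.

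The main obstacle, such as it is, is simply citing or assembling a clean transversal fluctuation statement for point-to-point geodesics conditioned on an upper-tail event (or, equivalently, an unconditional statement strong enough that the union bound with the constant-order lower tail still works); this is the only non-formal ingredient, and it is by now standard in the exponential LPP literature. Everything else is a two-line inclusion-exclusion together with the observation that a fixed positive lower bound beats an $N$-superpolynomially small error.
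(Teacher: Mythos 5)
Your key quantitative claim is false, and the argument collapses at exactly that point. The transversal fluctuations of the geodesic $\Gamma_{\mathbf{0},u}$ from $\mathbf{0}$ to $u_N(s)$ are themselves of order $N^{2/3}$: the standard estimate (e.g.\ \cite[Proposition C.9]{BGZ21}) says that the probability of exiting a strip of width $kN^{2/3}$ around the straight line is at most $e^{-ck^3}$, which decays in $k$ but does \emph{not} decay in $N$ for fixed $k$. Since $P(u)$ is a strip of width exactly of order $N^{2/3}$ (and is fixed by the statement of the lemma, so you cannot enlarge it), the event $\{\Gamma_{\mathbf{0},u}\not\subset P(u)\}$ has probability bounded below by a positive constant uniformly in $N$ — it is certainly not "superpolynomially small in $N$". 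Consequently your inclusion--exclusion
$\P(\widetilde{T}_{\mathbf{0},u}\ge a)\ge \P(T_{\mathbf{0},u}\ge a)-\P(T_{\mathbf{0},u}\ge a,\ \Gamma_{\mathbf{0},u}\not\subset P(u))$
gives nothing: the main term is $\approx e^{-\frac43 t^{3/2}}$, which for $t\ge T_0$ (with $T_0$ large) is a \emph{small} constant, while the only unconditional bound available for the error term is of constant order, so the error cannot be shown to be at most half the main term. Making this work in one shot would require a genuine geodesic-localization-under-upper-tail-conditioning statement (that conditionally on $\{T_{\mathbf{0},u}\ge a\}$ the geodesic stays in a width-$N^{2/3}$ strip with probability close to $1$, uniformly in $t\in[T_0,T]$), which is a nontrivial input you neither prove nor correctly cite.

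The paper's proof is designed precisely to sidestep this. It splits the straight segment from $\mathbf{0}$ to $u$ into $1/a$ pieces of length $aN$ with $a$ small, and uses superadditivity of the \emph{restricted} passage time: $\widetilde{T}_{\mathbf{0},u}\ge\sum_i \widetilde{T}_{iau,(i+1)au}$, with the summands independent. For each piece, the strip of width $N^{2/3}$ is $a^{-2/3}$ times the local transversal scale $(aN)^{2/3}$, so the exit probability for that piece is $e^{-ca^{-2}}$, while the per-piece upper-tail probability needed is only $e^{-(\frac43+\varepsilon)at^{3/2}}$ by \eqref{eq:tail1}. Choosing $a$ with $a^{-2}\gg at^{3/2}\gg 1$ makes the transversal-fluctuation error negligible \emph{relative to the per-piece target}, and multiplying the $1/a$ independent lower bounds reassembles the exponent $(\frac43+\varepsilon)t^{3/2}$ up to a harmless factor $(1-a)^{1/a}$. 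If you want to salvage your route, you must either import a conditional localization estimate of the type described above, or adopt this segmentation argument; the two-line subtraction as written does not prove the lemma.
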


Postponing the proof of Lemma \ref{l:restricted} to Section \ref{s:lpp} let us prove Lemma \ref{l:C}.

\begin{proof}[Proof of Lemma \ref{l:C}]
Let $\widetilde{T}_{v_i,u_i}$ denote the maximum weight of a path from $v_i$ to $u_i$ contained in the strip $P_i$ of width $N^{2/3}$ around the straight line joining $v_i$ and $u_i$. Let $C_i$ denote the event that 
    $$\widetilde{T}_{v_i,u_{i}}-4(1-\delta) N +(1-\delta)^{2/3}s_i^22^{4/3}((1-\delta)N)^{1/3} \ge \left(\frac{3}{4}\right)^{2/3}(1-\frac \vep2)2^{4/3}(\log t)^{2/3} 2^{4/3}N^{1/3}.$$
    Clearly, 
    $$C^c\subseteq \cap_{i} C^c_{i}.$$
    Clearly, for $t$ sufficiently large (depending on $\delta$) we know that the strips $P_i$ are disjoint and therefore the events $C_i$ are independent. By Lemma \ref{l:restricted} it follows that for $\delta$ sufficiently small depending on $\varepsilon$, $t$ sufficiently large depending on $\delta$ and $N$ sufficiently large depending on all other parameters we get
    $$\P(C^c_{i})\le 1-t^{-1+2\delta}.$$
    Therefore, using the independence of $C_i$'s we get 
    $\P(C^c)\le(1-t^{-1+2\delta})^{t^{1-\delta}}<1/2$ 
    for $t$ large enough, completing the proof. 
\end{proof}

We now proceed with the proof of Theorem \ref{t:airy2limit}, (ii). As in the previous results this will follow from the following two theorems.

\begin{theorem}
\label{thm:minima of airy_2 upper bound}
For any $\vep>0$ there exists $\gamma>0$ such that for all $t$ sufficiently large (depending on $\vep$) and $N$ sufficiently large (depending on $t, \vep$)
\[
\P \left( \min_{0 \leq s \leq t} \frac{T_N(s)-4N+s^22^{4/3}N^{1/3}}{2^{4/3}N^{1/3}(\log t)^{1/3}} \leq -(12)^{1/3}(1+\vep)\right) \leq t^{-\gamma}.
\]
\end{theorem}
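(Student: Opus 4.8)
The plan is to mirror the structure of the proof of Theorem \ref{thm: point to line min limit upper bound}: reduce the statement about the minimum of the point-to-point profile over $[0,t]$ to a union bound over $O(t/\delta)$ intervals of scaled length $\delta$, on each of which the minimum of $T_N(x)-4N+x^2 2^{4/3}N^{1/3}$ is controlled by a single LPP lower-tail estimate. So first I would state and use a lemma — call it Lemma \ref{lemma: itp lt ub} (the analogue for point-to-point of Lemma \ref{lemma: min interval to line}) — asserting that for any $\vep>0$ there is $\delta>0$ such that for $m\in[0,m_0]$, for $t$ large and $N$ large,
\[
\P\left(\min_{u_N(x)\in I^{m,\delta}}\bigl(T_N(x)-4N+2^{4/3}N^{1/3}x^2\bigr)\le -2^{4/3}N^{1/3}t\right)\le e^{-\frac{1}{12}(1-\vep)t^3}.
\]
This is the point-to-point counterpart of the point-to-line interval-to-line lower-tail bound and is exactly the kind of estimate the paper defers to Section \ref{s:lpp}; its proof is the genuinely hard ingredient (see below), but for the purposes of proving Theorem \ref{thm:minima of airy_2 upper bound} I would invoke it as a black box.

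Given that lemma, the argument is short. Fix $\vep>0$, pick $\vep_1$ with $(1+\vep)^3(1-\vep_1)>1$ and let $\delta>0$ be the constant produced by the lemma for $\vep_1$. Cover $[0,t]$ by the $\approx t/\delta$ intervals $I_i^\delta$ of scaled length $\delta$ with midpoints at $s=(i-\tfrac12)\delta$, and note the inclusion
\[
\Bigl\{\min_{0\le s\le t}\tfrac{T_N(s)-4N+s^2 2^{4/3}N^{1/3}}{2^{4/3}N^{1/3}(\log t)^{1/3}}\le -(12)^{1/3}(1+\vep)\Bigr\}\subset\bigcup_{i=1}^{t/\delta}\Bigl\{\min_{u_N(x)\in I_i^\delta}\tfrac{T_N(x)-4N+x^2 2^{4/3}N^{1/3}}{2^{4/3}N^{1/3}(\log t)^{1/3}}\le -(12)^{1/3}(1+\vep)\Bigr\}.
\]
Applying the lemma with the threshold $t$ there replaced by $(12)^{1/3}(1+\vep)(\log t)^{1/3}$ gives, for each $i$, a bound $\exp\bigl(-\frac{1}{12}(1-\vep_1)\cdot 12(1+\vep)^3\log t\bigr)=t^{-(1-\vep_1)(1+\vep)^3}$; a union bound over the $\approx t/\delta$ intervals then yields $\P(\cdots)\le \frac{t}{\delta}\,t^{-(1-\vep_1)(1+\vep)^3}=t^{-\gamma}$ for $\gamma:=(1-\vep_1)(1+\vep)^3-1>0$ by the choice of $\vep_1$, absorbing the constant $\delta^{-1}$ into $t^{o(1)}$. (One must check the hypothesis $m\in[0,m_0]$ of Lemma \ref{lemma: itp lt ub} is met: here $m$ ranges over $[0,t]$, so strictly one should either state the interval lemma uniformly over $m=O(t)$ — which is fine since the relevant scale separation is $t^\delta\ll N^{1/14}$ — or, as elsewhere in the paper, choose $N$ large depending on $t$; I would follow the paper's convention and just note this.) Exactly as in the proof of Theorem \ref{thm: point to line min limit upper bound}, since this is verbatim the structure of the point-to-line argument, the paper can legitimately say ``to avoid repetition we omit the details.''

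The main obstacle is entirely in Lemma \ref{lemma: itp lt ub}, not in the deduction above: upgrading the one-point lower-tail bound (Proposition \ref{l:p2plower}) for $T_N(x)$ at a single $x$ to a uniform bound over an interval of scaled length $\delta$. A single-point lower-tail event does not obviously imply the interval version — a low value somewhere on $I^{m,\delta}$ need not force a low value at the midpoint — so one typically proceeds by a chaining/interpolation argument: partition $I^{m,\delta}$ into $\sim\delta/\eta$ sub-intervals of scale $\eta$, use the one-point bound at the midpoints of these sub-intervals, and control the fluctuation of the profile $x\mapsto T_N(x)-4N+x^2 2^{4/3}N^{1/3}$ within each sub-interval by a modulus-of-continuity estimate (of the type in \cite{BBF22}, e.g.\ \cite[Theorem 2.3]{BBF22} cited elsewhere in the paper) to show the profile cannot drop by more than $o(t)2^{4/3}N^{1/3}$ over a scale-$\eta$ window except with superpolynomially small probability. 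Choosing $\eta$ small (depending on $\vep$) makes the loss in the exponent an arbitrarily small multiplicative factor, while the number $\sim\delta/\eta$ of sub-intervals costs only a polynomial-in-$t$ factor, harmless against $e^{-\frac{1}{12}(1-\vep)t^3}$. The delicate point is that the modulus-of-continuity bound must be good enough (cubic-exponential, matching the lower tail) not to dominate the $e^{-\frac1{12}(1-\vep)t^3}$ term — this is precisely why $\delta$ must be taken small depending on $\vep$, and is the content that genuinely belongs in Section \ref{s:lpp}.
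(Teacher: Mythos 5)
Your proposal is correct and takes essentially the same approach as the paper: the paper's proof of Theorem \ref{thm:minima of airy_2 upper bound} is precisely the deduction you give, namely invoking Lemma \ref{lemma: itp lt ub} (stated as a black box and proved in Section \ref{s:lpp}) with the threshold $(12)^{1/3}(1+\vep)(\log t)^{1/3}$ and taking a union bound over the $\approx t/\delta$ intervals, mirroring the proof of Theorem \ref{t:2maxub}. Your closing sketch of how Lemma \ref{lemma: itp lt ub} itself would be proved (chaining plus a modulus-of-continuity bound) differs from the paper's actual argument (a backstepping/superadditivity argument via a point $x_\mu$ on $\cl_{2(1-\mu)N}$, combining the one-point lower tail with \cite[Theorem 4.2, (i)]{BGZ21}), but since you only use the lemma as a black box this has no bearing on the correctness of your proof of the theorem.
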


\begin{theorem}
    \label{t:2minlb}
    For any $\varepsilon>0$,
$$\liminf_{N\to \infty}\P\left(\min_{0\le s\le t} \frac{T_N(s)-4N+s^22^{4/3}N^{1/3}}{2^{4/3}N^{1/3}(\log t)^{1/3}}\le -(12)^{1/3}(1-\vep) \right)> 0 $$
uniformly for all large $t$ (depending on $\vep$).
\end{theorem}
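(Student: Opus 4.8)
The plan is to mirror the structure of the lower-bound argument for the maxima of $\ca_1$ (Theorem \ref{thm: point to line min limit lower bound}), but now working with the point-to-point passage time $T_N(s)$ and the sharper lower-tail exponent $\frac{1}{12}$ from Proposition \ref{l:p2plower}. Fix $\vep>0$ and a small parameter $\delta=\delta(\vep)>0$ to be chosen at the end. Set $s_i=it^\delta$ and $u_i=u_N(s_i)$ for $i=0,1,\ldots,t^{1-\delta}$ (WLOG $t^{1-\delta}\in\Z$). It suffices to show that
\[
\liminf_{N\to\infty}\P\left(\min_{0\le i\le t^{1-\delta}}\frac{T_N(s_i)-4N+s_i^2 2^{4/3}N^{1/3}}{2^{4/3}N^{1/3}(\log t)^{1/3}}\le -(12)^{1/3}(1-\vep)\right)>0
\]
uniformly for $t$ large, since the minimum over the discrete set $\{s_i\}$ is at least the minimum over $[0,t]$. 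The first step is to reduce the event for $T_N(s_i)$ to one involving a \emph{restricted} passage time inside a thin parallelogram. Precisely, let $U_i^t:=\{x\in\Z^2:0\le\phi(x)\le 2N,\ |\psi(x)-\psi(u_i)|\le t^\delta/16\}$, which are pairwise disjoint, and let $T_N^{**}(s_i):=\max_{\gamma:\cl_0\to u_i,\ \gamma\subset U_i^t}T(\gamma)$ (one can equivalently use $\mathbf 0\to u_i$ inside the strip). Wait---here the relevant object is point-to-point, so let $\widehat T_i$ denote the maximum weight of a path from $\mathbf 0$ to $u_i$ staying in $U_i^t$, and let $\mathsf{TF}_i:=\{\Gamma_{\mathbf 0,u_i}\subset U_i^t\}$ be the transversal-fluctuation event. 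Then on $\mathsf{TF}_i$ we have $T_N(s_i)=\widehat T_i$, and the events $\{\widehat T_i\ \text{small}\}$ are independent across $i$ because the $U_i^t$ are disjoint.

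Next I would combine these as follows. If $\min_i T_N(s_i)$ fails to be as small as claimed, then for \emph{every} $i$ either $\mathsf{TF}_i$ fails or $\widehat T_i$ is not small; hence
\[
\left\{\min_i\frac{T_N(s_i)-4N+s_i^2 2^{4/3}N^{1/3}}{2^{4/3}N^{1/3}(\log t)^{1/3}}\ge -(12)^{1/3}(1-\vep)\right\}
\subseteq\left(\bigcap_{i=1}^{t^{1-\delta}}\Big\{\frac{\widehat T_i-4N+s_i^2 2^{4/3}N^{1/3}}{2^{4/3}N^{1/3}(\log t)^{1/3}}\ge -(12)^{1/3}(1-\vep)\Big\}\right)\cup\bigcup_{i=1}^{t^{1-\delta}}\mathsf{TF}_i^c.
\]
For the transversal fluctuation term I would invoke \cite[Theorem 2.3]{BBF22}: $\P(\mathsf{TF}_i^c)\le e^{ct^{2\delta}-c't^{3\delta}}$ for $t^\delta\ll N^{1/14}$, so the union over $i$ contributes at most $t^{1-\delta}e^{ct^{2\delta}-c't^{3\delta}}\to 0$. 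For the intersection term, I need a \emph{lower} bound on the probability that $\widehat T_i$ is small, i.e.\ an analogue of Lemma \ref{l:restricted} for the lower tail of a restricted point-to-point passage time: $\P(\widehat T_i\le 4N-s_i^2 2^{4/3}N^{1/3}-t'2^{4/3}N^{1/3})\ge \tfrac12 e^{-(\frac{1}{12}+\vep)t'^3}$ with $t'=(12)^{1/3}(1-\vep)(\log t)^{1/3}$, so that this probability is $\ge\tfrac12 e^{-(1-\vep^2)^3(1+\vep')\log t}=\tfrac12 t^{-(1-\vep^2)^3(1+\vep')}\ge\tfrac12 t^{-1+2\delta}$ for $\delta$ small. (The constant $(12)^{1/3}$ is exactly calibrated so that $\frac{1}{12}\big((12)^{1/3}\big)^3=1$.) Granting this, independence gives that the intersection has probability at most $(1-\tfrac12 t^{-1+2\delta})^{t^{1-\delta}}\le e^{-\frac12 t^{\delta}/2}\to 0$, so the whole right-hand side tends to $0$ and the complement has probability bounded below, uniformly in $N$ for $t$ large. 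Choosing $\delta$ small enough depending on $\vep$ at the end makes all the exponent comparisons work.

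The main obstacle is establishing the required lower bound on $\P(\widehat T_i\ \text{small})$, i.e.\ that restricting to the thin parallelogram costs only a constant factor in the lower-tail probability. Unlike the upper tail (where restriction trivially can only decrease the passage time, so one gets a one-sided bound for free), for the lower tail we must show that forcing \emph{all} paths in the strip to have small weight is not much more costly than forcing the unrestricted maximum to be small. I expect this to follow from the strategy already used for Lemma \ref{l:restricted}: use Proposition \ref{l:p2plower} to get the unrestricted lower-tail bound $\P(T_N(s_i)\le\cdots)\ge e^{-(\frac{1}{12}+\vep/2)t'^3}$, and then remove the contribution of paths that exit the strip using the transversal-fluctuation bound \cite[Theorem 2.3]{BBF22} together with a union bound over the (polynomially many in $t$) strips the escaping geodesic could pass through---the escape probability being superpolynomially small in $t$, hence negligible against $e^{-(\frac{1}{12}+\vep/2)t'^3}$ which is only polynomially small. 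Since this lemma is flagged in the excerpt as one of the estimates proved in Section \ref{s:lpp} (it is precisely the content of Lemma \ref{lemma: itp lt lb}), I would simply cite it; the remaining work here is purely the union-bound and exponent bookkeeping described above, which is routine.
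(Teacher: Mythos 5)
Your reduction works for the flat (point-to-line) case, but it breaks at the key independence step when transplanted to the droplet case $T_N(s)$. As written, your regions $U_i^t$ are vertical strips centred at $\psi(u_i)$; since $\psi(u_i)=-2\lfloor s_i(2N)^{2/3}\rfloor$ while $\psi(\mathbf 0)=0$, the origin lies outside $U_i^t$ for every $i\ge 1$, so there are no admissible paths from $\mathbf 0$ to $u_i$ inside $U_i^t$ and the events $\mathsf{TF}_i$ and $\{\widehat T_i\ \text{not small}\}$ are vacuous: this construction only makes sense when paths may start anywhere on $\cl_0$. The natural repair is to take thin parallelograms around the segments joining $\mathbf 0$ and $u_i$ (as in Lemma \ref{l:restricted}), but then every such region contains a neighbourhood of $\mathbf 0$, the regions overlap pairwise near the origin, and the claimed independence of the events $\{\widehat T_i\ \text{not small}\}$ fails. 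Worse, these are increasing events, so FKG/positive association gives $\P\bigl(\bigcap_i\{\widehat T_i\ \text{not small}\}\bigr)\ge\prod_i\P\bigl(\widehat T_i\ \text{not small}\bigr)$, which is the wrong direction: you need an upper bound on the intersection, and with $t^{1-\delta}$ positively correlated events each of probability $1-t^{-1+o(1)}$ nothing follows. This shared-origin dependence is exactly the obstruction that makes the Airy$_2$ bounds harder than the Airy$_1$ ones, and it is not a bookkeeping issue that a union bound can absorb.

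The paper circumvents it with a different decomposition. It cuts the picture at the line $x+y=2\kappa N$ with $\kappa=(\log t)^{-3}$, takes small intervals $\widetilde I_i$ around the points $v_i$ where the straight lines from $\mathbf 0$ to $u_i$ cross this line, and disjoint parallelograms $P_i$ running from there up to $u_i$. The passage time is split as $T_{\mathbf 0,u_i}=T_{\mathbf 0,v}+\widetilde T_{v,u_i}$ at the random crossing point $v\in\widetilde I_i$ of the geodesic (events $C_i$, $D_i$ localize the geodesic so that this identity and the restriction to $P_i$ are legitimate); the contributions $T_{\mathbf 0,v}$ over the common region near the origin are only required not to be atypically large, uniformly in $i$ (the decreasing events $A_i$, whose intersection is handled by FKG and is cheap because $\kappa$ is tiny); and genuine independence is used only for the events $B_i$, which ask that $\max_{v\in\widetilde I_i}\{\widetilde T_{v,u_i}-\E T_{v,u_i}\}$ be small inside the disjoint $P_i$. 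Lemma \ref{lemma: itp lt lb} enters precisely because the crossing point is random, so the restricted passage time must be small simultaneously for all $v\in\widetilde I_i$, not just for one endpoint. Incidentally, the step you flagged as the main obstacle (a lower bound on the lower-tail probability of a restricted passage time) is actually trivial: restriction can only decrease the passage time, so $\P(\widehat T_i\le x)\ge\P(T_N(s_i)\le x)$ and Proposition \ref{l:p2plower} applies directly; but this does not rescue your argument, since the missing ingredient is independence, not the one-point estimate.
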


\begin{proof}[Proof of Theorem \ref{t:airy2limit}, (ii)]
 By ergodicity of $\ca_2(\cdot)$, Theorem \ref{t:lppairy2limit}, Theorem \ref{thm:minima of airy_2 upper bound}, and Theorem \ref{t:2minlb} the proof follows. 
\end{proof}

To prove Theorem \ref{thm:minima of airy_2 upper bound} we need the following lemma which will also be proved in Section \ref{s:lpp}. Recall the setup of Lemma \ref{lemma: itp rt ub }. 

\begin{lemma} 
\label{lemma: itp lt ub}For any $ \vep>0, m_0>0$ there exists $\delta>0$ (depending on $\vep$),such that for $m \in [0,m_0]$ and for all $t$ sufficiently large (depending on $\vep$), $N$ sufficiently large (depending on $m_0,\vep, t$) 
\[
\P \left( \min_{u_N(x) \in I^{m,\delta}  }\left(T_N(x)-4N+2^{4/3}N^{1/3}x^2\right) \leq -2^{4/3}N^{1/3}t\right) \leq e^{-\frac{1}{12}(1-\vep) t^3}.
\]
\end{lemma}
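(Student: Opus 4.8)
**Proof plan for Lemma \ref{lemma: itp lt ub} (lower tail for the minimum of the point-to-point profile on a short interval).**

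The plan is to reduce the lower-tail estimate for the \emph{minimum} over the short interval $I^{m,\delta}$ to the one-point lower-tail bound in Proposition \ref{l:p2plower}, exactly in the spirit of the treatment of Lemma \ref{lemma: min interval to line} for the point-to-line case. The key point is that for the minimum it suffices to control a \emph{single} point: if the profile $x\mapsto T_N(x)-4N+2^{4/3}N^{1/3}x^2$ dips below $-2^{4/3}N^{1/3}t$ somewhere on $I^{m,\delta}$, then, provided the profile does not fluctuate by more than (say) $2^{4/3}N^{1/3}\vep' t$ across the scaled-length-$\delta$ interval, the value at the midpoint $\bo{N_m}=u_N(m)$ is already below $-2^{4/3}N^{1/3}(t-\vep' t)$. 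So I would write
\[
\Big\{\min_{u_N(x)\in I^{m,\delta}}\big(T_N(x)-4N+2^{4/3}N^{1/3}x^2\big)\le -2^{4/3}N^{1/3}t\Big\}
\subseteq
E_1\cup E_2,
\]
where $E_1=\{T_N(m)-4N+2^{4/3}N^{1/3}m^2\le -2^{4/3}N^{1/3}(1-\vep')t\}$ is the one-point event at the midpoint, and $E_2$ is the ``large local fluctuation'' event that the profile on $I^{m,\delta}$ varies by more than $2^{4/3}N^{1/3}\vep' t$. Then $\P(E_1)\le \exp(-\tfrac{1}{12}(1-\vep'')t^3)$ directly from Proposition \ref{l:p2plower} (after absorbing $(1-\vep')^3$ into the constant and using a Taylor expansion of $(\sqrt{v_1}+\sqrt{v_2})^2$ to match the centring, as is done in the proof of Lemma \ref{l:B}), and the whole job is to show $\P(E_2)$ is negligible compared to this.

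For $E_2$ the natural tool is a modulus-of-continuity / transversal-fluctuation estimate for the passage-time profile, which on the scaled interval of length $\delta$ should contribute fluctuations of order $\delta^{1/2}N^{1/3}$ rather than $tN^{1/3}$; choosing $\delta$ small (depending on $\vep$) makes this gap favourable. Concretely I would bound the oscillation of $x\mapsto T_{\bo 0,u_N(x)}$ over $I^{m,\delta}$ by comparing geodesic endpoints: by planarity and the ordering of geodesics, the difference $T_{\bo 0,u_N(x)}-T_{\bo 0,u_N(x')}$ for $u_N(x),u_N(x')\in I^{m,\delta}$ is controlled by passage times along short segments near $\cl_{2N}$, and a union bound over a net of $O(\delta^{-1})$ (or $O(\mathrm{poly})$) points in $I^{m,\delta}$, together with the one-point \emph{upper}-tail bound (Proposition \ref{l:p2pupper}) applied to these $O(\delta N^{2/3})$-length pieces, gives $\P(E_2)\le \exp(-c\,(\vep' t)^{3/2}/\delta^{c'})$ or similar — in any case a quantity that, after fixing $\delta=\delta(\vep)$, decays like $\exp(-c\, t^{3/2})$ and is therefore $o(\exp(-\tfrac{1}{12}(1+\vep)t^3))$ for large $t$. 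Alternatively, I would invoke a ready-made profile-regularity statement of the type already used in this paper (e.g.\ the modulus-of-continuity bounds from \cite{BBF22} cited around Lemma \ref{lemma: min interval to line}) to bypass the hands-on construction; the content is the same.

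Putting the two pieces together, for $t$ large (depending on $\vep$) and $N$ large (depending on $m_0,\vep,t$),
\[
\P\Big(\min_{u_N(x)\in I^{m,\delta}}\big(T_N(x)-4N+2^{4/3}N^{1/3}x^2\big)\le -2^{4/3}N^{1/3}t\Big)
\le \exp\!\Big(-\tfrac{1}{12}(1-\vep'')t^3\Big)+\exp(-c\,t^{3/2})
\le \exp\!\Big(-\tfrac{1}{12}(1-\vep)t^3\Big),
\]
uniformly over $m\in[0,m_0]$ since the one-point bound in Proposition \ref{l:p2plower} is uniform there. The main obstacle is the fluctuation estimate for $E_2$: one needs the local oscillation of the centred profile over a scaled-$\delta$ window to be controlled with a probability cost that beats $\exp(-\tfrac{1}{12}t^3)$, and getting this cleanly requires either a careful geodesic-ordering argument with the upper-tail input or a precise citation of an existing modulus-of-continuity result; everything else (the Taylor expansion for the centring, absorbing constants, the union bound over the net) is routine.
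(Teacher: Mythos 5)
Your decomposition $\{\min\le -t\}\subseteq E_1\cup E_2$ is valid as an inclusion, but it cannot yield the stated bound, and the final comparison you make is backwards: $e^{-c\,t^{3/2}}$ is \emph{not} $o\bigl(e^{-\frac{1}{12}t^3}\bigr)$ — it is enormously larger for large $t$ — so even granting your claimed estimate on $E_2$, the conclusion would only be a bound of order $e^{-c\,t^{3/2}}$, far weaker than $e^{-\frac{1}{12}(1-\vep)t^3}$. Worse, the problem is not just the bookkeeping: the oscillation event $E_2$ genuinely has probability that is too large. With $\delta$ fixed (and $\delta$ \emph{must} be fixed depending only on $\vep$, since Theorem \ref{thm:minima of airy_2 upper bound} takes a union over $\sim t/\delta$ intervals), the profile on $I^{m,\delta}$ exceeds its midpoint value by $\vep' t$ somewhere with probability of order $e^{-c(\vep' t)^{3/2}}$ (upper-tail mechanism), so the two-sided oscillation event you define has probability at least of that order; and even the one-sided downward version — the only part actually needed — has probability of order $e^{-\frac{1}{12}(\vep' t)^3(1+o(1))}$, since dropping $\vep' t$ below a typically $O(1)$ midpoint essentially forces the profile to reach level $\approx-\vep' t$, whose cost is cubic in $\vep' t$, not in $t$. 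For small $\vep'$ both exceed the target $e^{-\frac{1}{12}(1-\vep)t^3}$, so no modulus-of-continuity input, however sharp, can close this route: ``control the midpoint plus the local oscillation'' is the wrong reduction for a bound with the optimal constant.

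The paper's proof (and, contrary to your reading, also its proof of the point-to-line analogue, Lemma \ref{lemma: min interval to line}) uses a different mechanism: superadditivity through a single intermediate point at a \emph{short macroscopic} distance below the interval. One sets $\delta=\mu^{2/3}$ and takes $x_\mu$ on $\cl_{2(1-\mu)N}$ essentially on the straight line to $\bo{N}_m$; then $T_N(x)\ge T_{\bo 0,x_\mu}+T_{x_\mu,u_N(x)}$ gives $\{\min\le -t\}\subseteq \cc'\cup\cd'$, where $\cc'=\{T_{\bo 0,x_\mu}\le \E T_{\bo 0,x_\mu}-(1-2\vep')2^{4/3}N^{1/3}t\}$ is a one-point lower-tail event over distance $(1-\mu)N$ with cost $e^{-\frac{1}{12}(1-2\vep')^3(1-\mu)^{-1}t^3}$ (Proposition \ref{l:p2plower}), and $\cd'$ asks the minimum over the interval of the \emph{short} passage times $T_{x_\mu,u_N(x)}$ (over distance $\mu N$, fluctuation scale $(\mu N)^{1/3}$) to drop by $\vep' t\,2^{4/3}N^{1/3}$, i.e.\ by $\vep' t\mu^{-1/3}$ in local units; by \cite[Theorem 4.2, (i)]{BGZ21} this costs at most $e^{-c\vep'^3 t^3/\mu}$, which is made negligible by choosing $\mu$ (hence $\delta$) small depending on $\vep$. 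This is the key idea your proposal is missing: the ``local'' piece must be measured from a nearby base point so that its fluctuations live on scale $(\mu N)^{1/3}$ and its lower-deviation cost carries the factor $1/\mu$, rather than comparing profile values within the interval at the full scale $N^{1/3}$.
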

\begin{proof}[Proof of Theorem \ref{thm:minima of airy_2 upper bound}] By Lemma \ref{lemma: itp lt ub}, applying a union bound and arguing similarly as in the proof of Theorem \ref{t:2maxub}, the result follows.
\end{proof}

\subsection{Proof of Theorem \ref{t:2minlb}}
\begin{figure}[t!]
     \centering
     \includegraphics[width=10 cm]{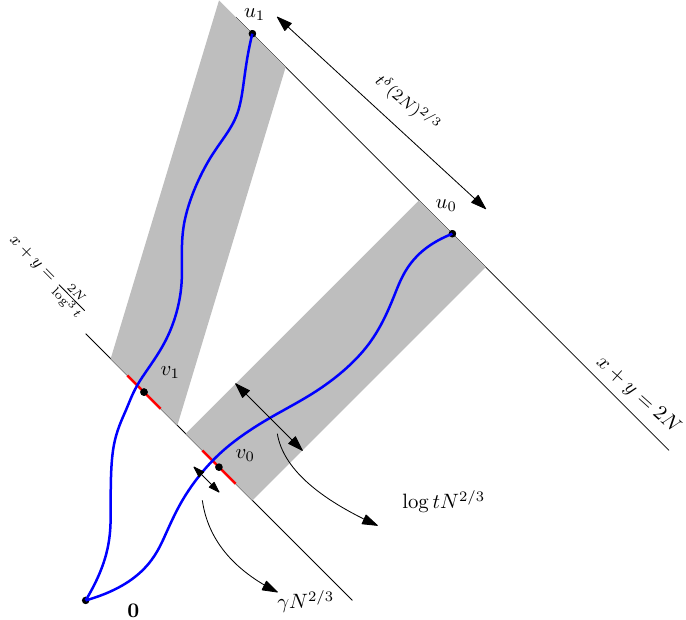}
    \caption{To prove Theorem \ref{t:2minlb} we consider the above figure. We consider the lines $x+y=2 \kappa N$ and $x+y=2N$ and the points $u_i.$ We consider the following events. For all $i, \Gamma_{\bo{0},u_i}$ intersect $x+y=2 \kappa N$ inside $\wt{I}_i$ (marked in red in the figure, these are the events $C_i$). For all $i, \Gamma_{\bo{0},u_i}$ stays within the parallelogram $P_i$ (shaded grey in the figure, these are the events $D_i$). For all $i,$ the passage time from $\bo{0}$ to $\wt{I}_i$ is not too large (these are the events $A_i$). There exists an $i$ such that restricted passage time within $P_i$ from $\wt{I}_i$ to $u_i$ is small (these are the events $B_i$). Observe that on intersection of all the four events above the passage time from $\bo{0}$ to $u_i$ is small for some $i$. We show that the intersection of these four events is a positive probability event. The parallelogram $P_i$'s are constructed in such a way that they are disjoint. We use this to estimate $\P(B).$} 
    \label{fig: A2min} 
     \end{figure}
The general structure of this proof is similar to the proof of Theorem \ref{t:2maxlb} but we need to be a bit more careful about the parameter choices. Recall the points $u_i$ from the proof of Theorem \ref{t:2maxlb}; we shall use the same notations here. But our definition of the points $v_i$ will be different. Let $\kappa=(\frac{1}{\log t})^{3}$ and without loss of generality (to avoid notational complications) we shall also assume that $\kappa N$ is an integer. Let $v_i=(\kappa N-\lfloor \kappa s_i(2N)^{2/3} \rfloor, \kappa N+\lfloor \kappa s_i(2N)^{2/3}\rfloor)$. That is, the points $v_i$ are (essentially) the points where the straight line joining $\mathbf{0}$ and $u_i$ intersects the line $x+y=2\kappa N$. Let $I_{i}$ denote the interval on the line $x+y=2\kappa N$ with length $2\log t N^{2/3}$ and midpoint $v_i$, and let $\widetilde{I}_i$ denote the interval with the same midpoint but with length $\gamma N^{2/3}$ (for some $\gamma$ small enough to be chosen later). Since $s_i=it^{\delta}$ and $\kappa t^{\delta}\gg \log t$ for $t$ large it follows that for $t$ sufficiently large the intervals $I_{i}$ are disjoint. Let $P_i$ denote the parallelogram whose shorter pair of opposite sides have midpoints $v_i$ and $u_i$ respectively, and the side containing $v_i$ coincides with $I_{i}$, as before the parallelograms $P_i$ are also disjoint (see Figure \ref{fig: A2min}). For $v\in \widetilde{I}_{i}$, let $\widetilde{T}_{v,u_i}$ denote the maximum weight of a path from $v$ to $u_i$ contained in $P_{i}$. We now define a series of events.

For $i=0,1,2,\ldots, t^{1-\delta}$, let $A_{i}$ denote the event that 
$$A_{i}=\left\{\max_{v\in \widetilde{I}_i}\left \{ T_{\mathbf{0},v}-\E T_{\mathbf{0},v}\right \}\le \frac{\varepsilon}{100}(\log t)^{1/3}N^{1/3}\right\}.$$
Let us set $A=\cap A_{i}$. Next, let $B_{i}$ denote the event that 
$$B_{i}=\left\{\max_{v\in \widetilde{I}_i} \left \{\wt{T}_{v,u_i}-\E T_{v,u_i}\right\}\le -(12)^{1/3}(1-\varepsilon/10)(\log t)^{1/3}2^{4/3}N^{1/3}\right\}$$
and set $B=\cup B_i$. Let $C_{i}$ denote the event that the intersection of $\Gamma_{\mathbf{0},u_i}$ with $x+y=2\kappa N$ is contained in $\widetilde{I}_i$, and set $C=\cap_i C_{i}$. Finally let $D_{i}$ denote the event that the part of $\Gamma_{\bo{0},u_i}$ above the line $x+y=2\kappa N$ is contained in $P_{i}$, and let us set $D=\cap_{i}D_{i}$.

We have the following lemma. 

\begin{lemma}
    \label{l:events}
    For $t$ sufficiently large, and $N$ sufficiently large depending on $t$, we have, on $A\cap B\cap C\cap D$, there exists $i$ such that 
    $$\frac{T_N(s_i)-4N+s_i^22^{4/3}N^{1/3}}{2^{4/3}N^{1/3}}\le -(12)^{1/3}(1-\vep)(\log t)^{1/3}.$$
\end{lemma}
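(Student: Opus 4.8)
The plan is to decompose the passage time $T_N(s_i) = T_{\mathbf{0},u_i}$ along the intermediate line $x+y = 2\kappa N$, using the geodesic $\Gamma_{\mathbf{0},u_i}$. First I would observe that by definition of $T_{\mathbf{0},u_i}$ as a last passage time, picking $v = \Gamma_{\mathbf{0},u_i}(2\kappa N)$ to be the point where the geodesic crosses $x+y=2\kappa N$, we have the exact identity $T_{\mathbf{0},u_i} = T_{\mathbf{0},v} + T_{v,u_i}$ (with the usual convention about not double-counting the shared vertex). On the event $C_i$ this crossing point $v$ lies in $\widetilde I_i$, and on the event $D_i$ the portion of the geodesic above $x+y=2\kappa N$ stays inside $P_i$, which forces $T_{v,u_i} = \widetilde T_{v,u_i}$ for that particular $v\in\widetilde I_i$ — the restricted and unrestricted passage times agree because the optimizing path is already confined to $P_i$. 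Hence on $C\cap D$ we get, for every $i$,
\[
T_{\mathbf 0,u_i} \le \max_{v\in\widetilde I_i} T_{\mathbf 0,v} + \max_{v\in\widetilde I_i}\widetilde T_{v,u_i}.
\]

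Next I would use the events $A$ and $B$ to bound the two terms. On $A_i$, $\max_{v\in\widetilde I_i} T_{\mathbf 0,v} \le \E T_{\mathbf 0,v} + \frac{\varepsilon}{100}(\log t)^{1/3}N^{1/3}$ (note $\E T_{\mathbf 0,v}$ is essentially constant over the short interval $\widetilde I_i$, up to lower-order corrections that a Taylor expansion controls). On $B_i$ we choose the index $i$ realizing the union $B = \cup B_i$, so that $\max_{v\in\widetilde I_i}\widetilde T_{v,u_i} \le \E T_{v,u_i} - (12)^{1/3}(1-\varepsilon/10)(\log t)^{1/3}2^{4/3}N^{1/3}$ for that $i$. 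Adding, the two expectations combine: by a Taylor expansion (as in the analogous computation in the proof of Lemma \ref{l:B}), $\E T_{\mathbf 0,v} + \E T_{v,u_i} = \E T_{\mathbf 0, u_i} + O(\kappa N^{1/3}\,\mathrm{poly}) $ where the error is governed by the curvature term, and crucially $\E T_{\mathbf 0,u_i} = 4N - s_i^2 2^{4/3}N^{1/3} + o(N^{1/3})$; the point of the choice $\kappa = (\log t)^{-3}$ is exactly that the curvature correction $\kappa\cdot(\text{stuff})$ contributes only at order $o((\log t)^{1/3}N^{1/3})$, negligible against the main term.

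Putting these together, for the chosen $i$,
\[
\frac{T_N(s_i) - 4N + s_i^2 2^{4/3}N^{1/3}}{2^{4/3}N^{1/3}} \le -(12)^{1/3}(1-\varepsilon/10)(\log t)^{1/3} + \frac{\varepsilon}{100}(\log t)^{1/3} + o((\log t)^{1/3}),
\]
and for $t$ sufficiently large (so the $o((\log t)^{1/3})$ term is absorbed, say, into another $\frac{\varepsilon}{100}(\log t)^{1/3}$) and $N$ large depending on $t$, the right side is at most $-(12)^{1/3}(1-\varepsilon)(\log t)^{1/3}$, which is the claim. The main obstacle I anticipate is bookkeeping the deterministic expectation terms: one must verify that $\E T_{\mathbf 0,v}$, $\E T_{v,u_i}$ and $\E T_{\mathbf 0,u_i}$ line up with the parabola $4N - s^2 2^{4/3}N^{1/3}$ to within $o((\log t)^{1/3}N^{1/3})$ uniformly over $0\le i\le t^{1-\delta}$ (equivalently $s_i \le t$), which is where both the $\kappa = (\log t)^{-3}$ scaling and the restriction $s \in [0, t]$ with $N \gg N_0(t)$ get used; the geometric content (the split along the geodesic, and $T_{v,u_i}=\widetilde T_{v,u_i}$ on $C\cap D$) is essentially immediate once the events are set up as above.
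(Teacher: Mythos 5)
Your overall strategy is the same as the paper's: decompose $T_{\bo{0},u_i}$ at the crossing point $v^*=\Gamma_{\bo{0},u_i}(2\kappa N)$, use $C_i$ and $D_i$ to place $v^*\in\wt{I}_i$ and to replace the upper piece by $\wt{T}_{v^*,u_i}$, then feed in $A_i$ and $B_i$ (with $i$ chosen from $B=\cup B_i$) and compare expectations via \eqref{eq: expectation estimate}. However, the expectation bookkeeping as you wrote it has a genuine flaw. After the correct same-point identity $T_{\bo{0},u_i}=T_{\bo{0},v^*}+\wt{T}_{v^*,u_i}$ you pass to the decoupled bound $T_{\bo{0},u_i}\le\max_{v}T_{\bo{0},v}+\max_{v}\wt{T}_{v,u_i}$ and then treat $\E T_{\bo{0},v}$ and $\E T_{v,u_i}$ as essentially constant on $\wt{I}_i$. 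That is false when $s_i$ is large: writing $v=v_i+(-d,d)$ with $|d|\le\gamma N^{2/3}/2$, one has $\E T_{\bo{0},v}\approx 4\kappa N-\bigl(\kappa s_i(2N)^{2/3}+d\bigr)^2/(\kappa N)$, which varies over $\wt{I}_i$ by about $2^{5/3}\gamma s_i N^{1/3}$ (of order $tN^{1/3}$ for the largest $i$), and similarly for $\E T_{v,u_i}$; the linear-in-$d$ terms cancel only when the \emph{same} $v$ appears in both pieces. With decoupled maxima, the sum of the two expectation maxima can exceed $\E T_{\bo{0},u_i}$ by roughly $2^{5/3}\gamma s_iN^{1/3}\gg(\log t)^{1/3}N^{1/3}$, which destroys the bound for $i$ of order $t^{1-\delta}$. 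Relatedly, your two-sided claim $\E T_{\bo{0},v}+\E T_{v,u_i}=\E T_{\bo{0},u_i}+o((\log t)^{1/3}N^{1/3})$ fails even at a fixed $v$: the deficit is $\approx d^2/(\kappa(1-\kappa)N)$, which can be of order $\gamma^2(\log t)^3N^{1/3}$; it is only small for $v$ near $v_i$, though its sign is at least the harmless one.

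The repair is exactly the step the paper takes, and it is simpler than any Taylor expansion: since $A_i$ and $B_i$ bound the fluctuations uniformly over $v\in\wt{I}_i$, apply them at the single random point $v^*$, and then use superadditivity of expectations, $\E T_{\bo{0},u_i}\ge\E T_{\bo{0},v}+\E T_{v,u_i}$ for every fixed $v\in\wt{I}_i$ (hence at $v^*$); this one-sided inequality has no error term and is precisely the direction you need. Combined with \eqref{eq: expectation estimate} applied to $u_i$ (with $N$ large depending on $t$, so the $O(N^{1/3})$ correction is negligible against $(\log t)^{1/3}N^{1/3}$ uniformly in $i$), this gives the stated bound. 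Your remaining steps, namely the geodesic decomposition and the identification $T_{v^*,u_i}=\wt{T}_{v^*,u_i}$ on $C_i\cap D_i$, are fine and agree with the paper.
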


\begin{proof}
    On the event $A \cap B \cap C \cap D$ we have there exists an $i$ such that 
    \[
    \max_{v\in \widetilde{I}_i} \left \{ \wt{T}_{v,u_i}-\E T_{v,u_i} \right \}\le -(12)^{1/3}(1-\varepsilon/10)(\log t)^{1/3}2^{4/3}N^{1/3}.
    \]
Further, on the event $A \cap B \cap C \cap D$ there exists $v \in \wt{I_i}$ such that 
\[
T_{\bo{0},u_i}=T_{\bo{0},v}+\wt{T}_{v,u_i}.
\]
Hence, $A \cap B \cap C \cap D$ we have for $N$ sufficiently large depending on $t$ and $\vep$ and using the fact that $\E T_{\bo{0},u_i} \geq \E T_{\bo{0},v}+\E T_{v,u_i}$ 
\[
T_{\bo{0},u_i} \leq \E T_{\bo{0},u_i} -(12)^{1/3}(1-\frac \vep2)(\log t)^{1/3}2^{4/3}N^{1/3} \leq 4N-s_i^22^{4/3}N^{1/3}-(12)^{1/3}(1-\vep)(\log t)^{1/3}2^{4/3}N^{1/3},
\]
where the last inequality follows from the fact that (see \cite[Theorem 2]{LR10})  for any $\gamma>0$ with $\gamma < \frac mn < \gamma^{-1},$ there exists constant $C>0$ (depending only on $\gamma$) such that for all $n \geq 1$
\begin{equation}
\label{eq: expectation estimate}
|\E(T_{(0,0),(m,n)})-( \sqrt{m}+\sqrt{n})^2| \leq Cn^{1/3}.
\end{equation}

This completes the proof. 
\end{proof}

Therefore it only remains to show that 
$$\liminf_{t\to \infty}\liminf_{N\to \infty} \P(A\cap B\cap C\cap D)>0.$$
For this we shall need the following two lemmas.

\begin{lemma}
    \label{l:Abound}
    For $t$ sufficiently large and $N$ sufficiently large depending on $t$, we have 
    $\P(A)\ge \frac{1}{4}$. 
\end{lemma}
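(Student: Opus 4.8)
The plan is to bound $\P(A^c)=\P(\cup_i A_i^c)$ by a union bound and show each summand is tiny. Fix $i$; the event $A_i^c$ says that $\max_{v\in \widetilde{I}_i}\{T_{\mathbf 0,v}-\E T_{\mathbf 0,v}\}> \frac{\varepsilon}{100}(\log t)^{1/3}N^{1/3}$. Note $\widetilde I_i$ is an interval of scaled length $\gamma$ (unscaled $\gamma N^{2/3}$) sitting on the line $x+y=2\kappa N$ with $\kappa=(\log t)^{-3}$; the relevant fluctuation scale there is $(\kappa N)^{1/3}=(\log t)^{-1}N^{1/3}$, so the deviation we are asking about is of order $(\log t)^{1/3}N^{1/3}=(\log t)^{4/3}\cdot(\kappa N)^{1/3}$, i.e.\ a deviation of $r:=c\,\varepsilon(\log t)^{4/3}$ on the natural scale of the point $v_i$. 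First I would reduce the maximum over the interval to a one-point bound: an interval-to-point upper-tail estimate of exactly the same flavour as Lemma~\ref{lemma: itl rt ub} / Lemma~\ref{lemma: itp rt ub } (with the roles of the endpoints reversed, since here we go from a point $\mathbf 0$ to an interval $\widetilde I_i$ rather than from an interval to a point — but by the symmetry of LPP under reflection this is the same statement) gives
\[
\P\!\left(\max_{v\in \widetilde I_i}\bigl(T_{\mathbf 0,v}-\E T_{\mathbf 0,v}\bigr)\ge r\,(\kappa N)^{1/3}\right)\le C\max\{1,r\}\,e^{-\frac43(1-\vep')r^{3/2}}
\]
for $N$ large. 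Since $r\asymp (\log t)^{4/3}$, the right-hand side is at most $\exp(-c(\log t)^2)$, which is $o(t^{-1-\delta})$; the union over the $\le t^{1-\delta}+1$ values of $i$ then gives $\P(A^c)\le (t^{1-\delta}+1)\exp(-c(\log t)^2)\to 0$, hence $\P(A)\ge \tfrac14$ for $t$ (and then $N$) large enough.

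The key steps, in order: (1) record or cite the interval-to-point upper-tail bound — this is the point-to-interval analogue of Lemma~\ref{lemma: itl rt ub}, obtained from Proposition~\ref{l:p2pupper} together with the standard chaining/geometry argument used for the interval estimates in Section~\ref{s:lpp}, and I would simply invoke that (it will be one of the Section~\ref{s:lpp} lemmas, or follows from it with cosmetic changes); (2) substitute $r\asymp \varepsilon(\log t)^{4/3}$ and check $e^{-\frac43(1-\vep')r^{3/2}}=e^{-c(\log t)^2}$; (3) replace $\E T_{\mathbf 0,v}$ by $(\sqrt{v_1}+\sqrt{v_2})^2$ up to an $O((\kappa N)^{1/3})$ error using \eqref{eq: expectation estimate}, which is negligible against $r(\kappa N)^{1/3}$ since $r\to\infty$, so the one-point estimate applies with the centering the propositions are stated with; (4) union bound over $i=0,\dots,t^{1-\delta}$ and conclude.

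The main obstacle is really bookkeeping rather than anything deep: making sure the interval-to-point maximal estimate is available on the line $x+y=2\kappa N$ with the correct local scaling (the anisotropic geometry means an unscaled interval of length $\gamma N^{2/3}$ has scaled length $\gamma$ relative to the point $v_i$ at distance $\kappa N$, i.e.\ scaled length $\gamma\kappa^{-2/3}$ relative to the bulk — one must check this still counts as an $O(1)$-scaled interval, which it does since $\gamma\kappa^{-2/3}=\gamma(\log t)^2$ is only polylogarithmic and the one-point bound has room to spare), and keeping track of which parameters the various ``sufficiently large'' thresholds depend on. None of this is serious: the super-polynomial gain $e^{-c(\log t)^2}$ versus the merely polynomial number $t^{1-\delta}$ of events is so comfortable that crude estimates suffice throughout.
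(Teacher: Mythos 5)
Your argument is correct and is essentially the paper's: both reduce $A_i^c$ to an upper-tail maximal estimate at the local scale of the line $x+y=2\kappa N$, where the deviation $\frac{\vep}{100}(\log t)^{1/3}N^{1/3}\asymp(\log t)^{4/3}(\kappa N)^{1/3}$ yields a probability $e^{-c(\log t)^2}$ that comfortably beats the $t^{1-\delta}$ many indices. The only cosmetic differences are that the paper combines the events via FKG (the $A_i$ are decreasing) rather than a union bound over $i$, and it deals with the fact that $\widetilde{I}_i$ has local scaled length $\gamma(\log t)^2$ (not $O(1)$) by explicitly chopping it into $(\log t)^2$ unit-scale subintervals and applying the maximal estimate of BGZ21 (Theorem 4.2(ii)) to each, which is the step you gesture at with ``chaining / room to spare.''
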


\begin{proof}
    Since, $A_i$'s are decreasing events by FKG inequality we have 
    \[
    \P(A) \geq \prod_{i=0}^{t^{1-\delta}}\P(A_i).
    \]
    Next we have, if we break $\wt{I}_i$ into $\gamma (\log t)^2$ many intervals each having length $\left( \frac{N}{\log^3 t}\right)^{2/3},$ then by applying \cite[Theorem 4.2, (ii)]{BGZ21} to each of these intervals and by a union bound we have there exists $c>0$ such that for sufficiently large $N$ and $t,$
    \[
    \P(A_i^c) \leq (\log t)^2e^{-c ({\log t})^2}.
    \]
    Hence, there exists $c'>0$ such that for sufficiently large $t$
    \[
    \P(A) \geq \left(1- e^{-c'(\log t)^2}\right)^{t^{1-\delta}} \geq \left (1-\frac{1}{t^{1-\delta}} \right)^{t^{1-\delta}} \geq \frac 14.
    \]
    This completes the proof.
\end{proof}

We also need the next lemma whose proof is postponed to Section \ref{s:lpp}. Recall the setup of Lemma \ref{lemma: itp rt ub }.

\begin{lemma} 
\label{lemma: itp lt lb}For any $ \vep,m_0>0$ there exists $\delta>0$ (depending on $\vep$),and for all $m$ with $m \in [0,m_0]$, for all $t$ sufficiently large (depending on $\vep$), for all $N$ sufficiently large (depending on $m_0, t, \vep$)
\[
\P \left( \max_{u_N(x) \in I^{m,\delta}  }\left \{T_N(x)-\E \left(T_N(x)\right)\right \}\leq -2^{4/3}N^{1/3}t\right) \geq e^{-\frac{1}{12}(1+\vep) t^3}.
\]
    
\end{lemma}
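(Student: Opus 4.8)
The target is a lower bound on a probability, so the plan is to exhibit an explicit event of probability at least $e^{-(\frac1{12}+\vep)t^3}$ on which $T_N(x)\le\E T_N(x)-2^{4/3}N^{1/3}t$ holds simultaneously for all $u_N(x)\in I^{m,\delta}$. First I would pass to the point-to-interval passage time: $\max_{u_N(x)\in I^{m,\delta}}T_N(x)=T_{\mathbf 0,I^{m,\delta}}$, and by the expectation estimate \eqref{eq: expectation estimate} the quantity $\E T_N(x)$ oscillates over $I^{m,\delta}$ by at most $C(m_0)\,\delta\,2^{4/3}N^{1/3}$ — a fixed constant times $2^{4/3}N^{1/3}$, hence at most $\tfrac{\vep}{100}\,t\,2^{4/3}N^{1/3}$ once $t$ is large — so it suffices to prove
$$\P\Big(T_{\mathbf 0,I^{m,\delta}}\ \le\ 4N-m^2 2^{4/3}N^{1/3}-\big(1+\tfrac{\vep}{50}\big)\,t\,2^{4/3}N^{1/3}\Big)\ \ge\ e^{-(\frac1{12}+\vep)t^3}.$$
Note also that one may replace the threshold $1+\tfrac{\vep}{50}$ by $1+C_0/t$ for any fixed $C_0$ (since $(t+C_0)^3\le(1+\tfrac{\vep}{50})t^3$ for $t$ large), which gives a little extra room in the construction below.

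The core is to make the probability of this lower-tail event, for a \emph{short} interval, as close to the pure point-to-point lower-tail probability of Proposition \ref{l:p2plower} as we wish, paying only an $o_\delta(1)$ inflation of the exponent. I would do this by depleting the LPP environment in a thin neighbourhood of the segment $[\mathbf 0,\bo{N_m}]$ through a chain of one-point events at successive scales: split the segment at lines $\cl_{2\rho_j N}$ with $1-\rho_j\asymp 2^{-j}$, and on each block impose, via Propositions \ref{l:p2plower} and \ref{l:p2pupper}, that the passage time across the block from its (random) entry point is at its mean minus the corresponding share of $t\,2^{4/3}N^{1/3}$, while remaining confined to a tube whose transversal width is comparable to the scale of that block. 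The transversal-fluctuation and coalescence estimates of \cite[Theorem 2.3]{BBF22}, together with standard control on the geometry of geodesics, are used to glue the blocks and to confine, with probability $1-e^{-\omega(t^3)}$, every geodesic $\Gamma_{\mathbf 0,u_N(x)}$ with $u_N(x)\in I^{m,\delta}$ to the resulting region. Since $I^{m,\delta}$ occupies only a $\delta$-fraction of the transversal scale at the last line, the final ``fan'' block should contribute an extra amount that is $O(\delta^{1/2})$ times the point-to-point exponent, so the total exponent along the chain is $\tfrac1{12}(1+o_\delta(1))$, which is $\le\tfrac1{12}(1+\vep/2)$ once $\delta$ is chosen small enough depending on $\vep$.

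On the intersection of all the depletion events and the geometric confinement event, concatenating passage times through the block endpoints and using \eqref{eq: expectation estimate} (so that the sum of the block means dominates $\E T_N(x)$ for every $x$) forces $T_N(x)\le\E T_N(x)-t\,2^{4/3}N^{1/3}$ for all $u_N(x)\in I^{m,\delta}$. Because each auxiliary (geometric or one-point upper-tail) event has complementary probability $e^{-\omega(t^3)}$, which is negligible against the $e^{-(\frac1{12}+\vep/2)t^3}$ coming from the chain of lower-tail events, the intersection still has probability at least $e^{-(\frac1{12}+\vep)t^3}$, as required. The proof of Lemma \ref{lemma: itp lt lb} then follows by unwinding the reduction of the first paragraph.

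I expect the main obstacle to be exactly what rules out the naive argument. If one tried to bound $T_{\mathbf 0,I^{m,\delta}}$ by the value at the centre $u_N(m)$ plus the ``bulge'' $\max_{u_N(x)\in I^{m,\delta}}(T_N(x)-\E T_N(x))-(T_N(m)-\E T_N(m))$, one finds that this bulge is an $O(1)$-order random variable with only a $\tfrac32$-power upper tail, so it cannot be absorbed as a negligible correction against a target of size $e^{-\Theta(t^3)}$; moreover, across any single intermediate line the bulge is governed by an environment independent of the depletion, so conditioning does not help. This is why the depletion must be built at all scales at once, so that the smallness of the entire profile on $I^{m,\delta}$ is an automatic consequence of the chain of one-point lower-tail events rather than an independent regularity statement. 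The delicate part is therefore the bookkeeping — choosing the scales $2^{-j}$, the tube widths, and the fan parameters so that the loss in the exponent stays $o_\delta(1)\cdot\tfrac1{12}$ while all the failure probabilities stay $e^{-\omega(t^3)}$ — i.e., quantitatively driving the relevant lower-tail exponent down from the point-to-line value $\tfrac16$ to $\tfrac1{12}(1+\vep)$ using only the one-point inputs.
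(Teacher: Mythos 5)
Your overall plan (exhibit an explicit event of probability $e^{-\frac1{12}(1+\vep)t^3}$ forcing the whole profile on $I^{m,\delta}$ to be low, with all auxiliary events having failure probability $o(e^{-t^3})$) is the right shape, and your reduction to the point-to-interval passage time is fine. The gap is in the central mechanism. Your chain of block events does not certify an \emph{upper} bound on $T_N(x)$: superadditivity turns a decomposition along intermediate lines $\cl_{2\rho_j N}$ into \emph{lower} bounds on $T_{\mathbf 0,u_N(x)}$, never upper bounds. To upper-bound $T_{\mathbf 0,u_N(x)}$ through a line $\cl_{2\rho N}$ you must control $\max_z\bigl(T_{\mathbf 0,z}+T_{z,u_N(x)}\bigr)$ over \emph{all} $z$ in the confinement window on that line, i.e.\ you need "max of a passage-time profile over an interval is small" at every scale --- which is exactly the type of statement the lemma asserts and is not supplied by the one-point Propositions \ref{l:p2plower}, \ref{l:p2pupper} (events "from the random entry point of the geodesic" are not one-point events either, and are not independent across blocks). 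You correctly identify this "bulge" problem in your last paragraph, but the assertion that building the depletion "at all scales at once" makes the profile bound automatic is precisely the missing idea: at each scale the same profile-control problem recurs, so the construction as described is circular. (Minor additional issues: the tube widths needed for confinement with failure probability $e^{-\omega(t^3)}$ are of order $tN^{2/3}$, and upper-tail inputs from Proposition \ref{l:p2pupper} only give $e^{-ct^{3/2}}$-type bounds unless the deviations are taken $\gg t^2$; neither is reconciled with the claimed $o_\delta(1)$ loss in the exponent.)

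The paper's proof (following \cite[Lemma 7.2]{BBBK24}) resolves this with one simple device you may want to internalize: overshoot the target line. Take $x_\mu$ on $\cl_{2(1+\mu)N}$ essentially on the ray through $\bo{N}_m$, with $\delta=\mu^{2/3}$, and impose a \emph{single} one-point lower-tail event $\cc''=\{T_{\bo{0},x_\mu}\le \E T_{\bo{0},x_\mu}-(1+\vep')t2^{4/3}N^{1/3}\}$, which by \cite[Theorem 1.5, (ii)]{BBBK24} has probability at least $e^{-\frac1{12}\frac{(1+\vep')^4}{1+\mu}t^3}$, together with the guard event $\cd''$ that no crossing time $T_{u_N(x),x_\mu}$, $u_N(x)\in I^{m,\delta}$, falls below its mean by more than $\frac{\vep'}2 t2^{4/3}N^{1/3}$; since these crossings live at scale $\mu N$, the failure probability of $\cd''$ is $e^{-c\vep'^3t^3/\mu}$, negligible once $\mu$ is small, so $\P(\cc''\cap\cd'')\ge\P(\cc'')-\P((\cd'')^c)$ still has the right exponent. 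On $\cc''\cap\cd''$, superadditivity $T_{\bo{0},x_\mu}\ge T_N(x)+T_{u_N(x),x_\mu}$ together with \eqref{eq: expectation estimate} pins down every $T_N(x)$ simultaneously from above --- the whole-interval control comes for free because the interval sits \emph{between} $\bo 0$ and $x_\mu$, inside a single one-point event, rather than being assembled from blocks. This is the step your multiscale depletion scheme is missing.
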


We use the ext lemma to control the probability of $B$. 

\begin{lemma}
\label{l:bbound}
   If $\vep'$ is sufficiently small compared to $\varepsilon$ we have, for each $i$, $\P(B_i)\ge \frac{1}{t^{1-\vep'}}$. 
\end{lemma}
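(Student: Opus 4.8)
The plan is to prove Lemma~\ref{l:bbound} by translating the event $B_i$ into a statement about a restricted passage time of macroscopic order, and then invoking Lemma~\ref{lemma: itp lt lb}. Recall $B_i=\{\max_{v\in\widetilde I_i}\{\wt T_{v,u_i}-\E T_{v,u_i}\}\le -(12)^{1/3}(1-\vep/10)(\log t)^{1/3}2^{4/3}N^{1/3}\}$. First I would observe that $\wt T_{v,u_i}$ is the passage time from $v$ to $u_i$ restricted to the parallelogram $P_i$, and the pair $(v,u_i)$ spans a vertical (time) distance $2(1-\kappa)N$ with $v$ ranging over $\widetilde I_i$, an interval of scaled length $\gamma$ on the line $x+y=2\kappa N$. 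After recentering so that $\widetilde I_i$ plays the role of $\cl_{2N}$ and $u_i$ the role of the running endpoint (or vice versa, using up/right symmetry), this is exactly the configuration of Lemma~\ref{lemma: itp lt lb}: a point-to-interval minimum of $T_N(x)-\E T_N(x)$ over an interval of scaled length $\delta=\gamma$, where the transversal displacement of $u_i$ relative to the center of $\widetilde I_i$ is of constant order $m\le m_0$ (here $m$ is determined by $s_i$ and $\kappa$; since $\kappa s_i\ll 1$, in fact $m$ is $o(1)$, comfortably bounded).

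Next I would set up the parameter matching. Lemma~\ref{lemma: itp lt lb}, with the deviation parameter $\tau$ chosen so that $2^{4/3}N^{1/3}\tau$ equals $(12)^{1/3}(1-\vep/10)(\log t)^{1/3}2^{4/3}N^{1/3}$, i.e.\ $\tau=(12)^{1/3}(1-\vep/10)(\log t)^{1/3}$, gives the lower bound $\exp(-\tfrac{1}{12}(1+\vep')\tau^3)$ for $\P(B_i)$, provided $\delta=\gamma$ is chosen small enough depending on $\vep'$, $t$ is large depending on $\vep'$, and $N$ is large. Then $\tfrac{1}{12}\tau^3=\tfrac{1}{12}\cdot 12\cdot(1-\vep/10)^3\log t=(1-\vep/10)^3\log t$, so $\P(B_i)\ge \exp(-(1+\vep')(1-\vep/10)^3\log t)=t^{-(1+\vep')(1-\vep/10)^3}$. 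If $\vep'$ is taken sufficiently small compared to $\vep$, then $(1+\vep')(1-\vep/10)^3<1-\vep''$ for some $\vep''>0$, hence $\P(B_i)\ge t^{-(1-\vep'')}\ge t^{-1+\vep'}$ after relabeling, which is the claimed bound. (Here I am using the paper's convention that the $\vep'$ appearing in the statement is just a placeholder; the clean thing is to fix any $\vep''<\vep/4$ say and show $\P(B_i)\ge t^{-1+\vep''}$.)

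The one genuine technical point — and the main obstacle — is the passage from the unrestricted point-to-interval estimate that Lemma~\ref{lemma: itp lt lb} would naturally give to the \emph{restricted} quantity $\wt T_{v,u_i}$ that appears in $B_i$. If Lemma~\ref{lemma: itp lt lb} is stated (as Lemma~\ref{l:restricted} is) directly for the parallelogram-restricted passage time, there is nothing to do; otherwise one needs the observation that $B_i$ is a \emph{lower} bound event — we want the restricted passage time to be \emph{small} — and the restricted passage time is pointwise at most the unrestricted one, so $\{\max_{v}(T_{\mathbf0,\cdot}\text{-type restricted}) \le -c\}\supseteq\{\max_v(\text{unrestricted})\le -c\}$ would go the wrong way. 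Thus one really does want the restricted version, and the right move is to note that the event forcing the restricted passage time to be this small can be engineered from the construction underlying Lemma~\ref{lemma: itp lt lb}: the lower-bound construction there produces a geodesic that, with the stated probability, already lies in a thin strip, so it is automatically contained in $P_i$ (whose width $\gamma N^{2/3}$ can be taken to dominate that strip). So the argument is: apply Lemma~\ref{lemma: itp lt lb} in its natural (strip-restricted) form with strip $\subset P_i$, choose $\delta=\gamma$ small, and read off the bound.

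To summarize the steps in order: (1) recenter coordinates so $(\widetilde I_i,u_i)$ matches the $(\cl_{2N},u_N(x))$ geometry of Lemma~\ref{lemma: itp lt lb}, verifying the transversal parameter $m$ is bounded (indeed $o(1)$); (2) identify the deviation parameter $\tau=(12)^{1/3}(1-\vep/10)(\log t)^{1/3}$ so that $2^{4/3}N^{1/3}\tau$ is the threshold in $B_i$; (3) apply Lemma~\ref{lemma: itp lt lb} (in its strip-restricted form, with strip inside $P_i$ and $\delta=\gamma$ small depending on the target exponent) to get $\P(B_i)\ge \exp(-\tfrac{1}{12}(1+\vep')\tau^3)$; (4) compute $\tfrac1{12}\tau^3=(1-\vep/10)^3\log t$ and conclude $\P(B_i)\ge t^{-(1+\vep')(1-\vep/10)^3}\ge t^{-1+\vep'}$ once $\vep'$ is small enough relative to $\vep$; (5) record the dependence of $\gamma,\delta$ on $\vep$ so that the later use of $C_i$ (which constrains $\Gamma_{\mathbf0,u_i}$ to pass through $\widetilde I_i$, of scaled length $\gamma$) remains consistent. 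The main obstacle is purely bookkeeping — matching the restricted-passage-time geometry and the $\kappa,\gamma,\delta$ parameters — rather than any new probabilistic input.
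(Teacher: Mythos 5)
Your overall route is the same as the paper's: match the threshold in $B_i$ to the deviation parameter $\tau=(12)^{1/3}(1-\vep/10)(\log t)^{1/3}$ of Lemma \ref{lemma: itp lt lb}, compute $\tfrac1{12}\tau^3=(1-\vep/10)^3\log t$, and conclude $\P(B_i)\ge t^{-(1+\vep')(1-\vep/10)^3}\ge t^{-(1-\vep')}$ for $\vep'$ small; that bookkeeping is correct. The genuine problem is your treatment of the restriction to $P_i$. You assert that the inclusion $\{\max_v(\text{restricted})\le -c\}\supseteq\{\max_v(\text{unrestricted})\le -c\}$ ``goes the wrong way'' and that one therefore needs a strip-restricted form of Lemma \ref{lemma: itp lt lb}, justified by saying its lower-bound construction ``produces a geodesic lying in a thin strip''. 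This is backwards. Since $\wt{T}_{v,u_i}\le T_{v,u_i}$ pointwise and $B_i$ asks the \emph{restricted} quantity to be small, that inclusion is exactly the one you need: it gives $\P(B_i)\ge \P(B_i')$, where $B_i'$ is the unrestricted event, and $B_i'$ is precisely what Lemma \ref{lemma: itp lt lb} lower bounds. This is essentially the paper's proof (the paper introduces $B_i'$ together with the event $\mathsf{LTF}_i$ that some $\Gamma_{v,u_i}$ exits $P_i$, uses $B_i'\subset B_i\cup\mathsf{LTF}_i$ and $\P(\mathsf{LTF}_i)\le e^{-c(\log t)^3}$ from \cite[Proposition C.9]{BGZ21}; even the $\mathsf{LTF}_i$ correction is dispensable by the monotonicity just described). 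Your substitute — a ``natural strip-restricted form'' of Lemma \ref{lemma: itp lt lb} — is not available in the paper, and the geodesic-localization heuristic you give is not a valid argument for a lower-tail event: forcing the restricted passage time to be small is not about localizing a single geodesic, it follows from the unrestricted bound by the very monotonicity you discarded.

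A second error is your verification of the hypothesis $m\in[0,m_0]$: the claim that the transversal parameter is $o(1)$ ``since $\kappa s_i\ll1$'' is false. The segment from $v_i$ (midpoint of $\wt{I}_i$) to $u_i$ lies along the straight line from $\bo{0}$ to $u_i$, so after recentring its scaled off-diagonal parameter is $m=(1-\kappa)^{1/3}s_i\approx s_i$, which ranges up to $t$ (and even $\kappa s_i=s_i/(\log t)^3$ is $\gg1$ for the largest $i$). So Lemma \ref{lemma: itp lt lb} with a fixed $m_0$ is not directly applicable for your stated reason; what saves the argument is that $N\to\infty$ is taken after $t$ is fixed, so the lemma can be invoked with $m_0$ of order $t$ (its proof only needs $N$ large depending on $m_0$), equivalently that the endpoint direction ratio tends to $1$ as $N\to\infty$ for fixed $t$. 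The paper glosses over this uniformity point, but your justification as written does not supply it.
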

\begin{proof}
    Let us consider the following events. 
    \[
    B_i':=\left\{\max_{v\in \widetilde{I}_i} T_{v,u_i}-\E T_{v,u_i}\le -(12)^{1/3}(1-\varepsilon/10)(\log t)^{1/3}2^{4/3}N^{1/3}\right\}.
    \]
    \[
    \mathsf{LTF}_i:=\{\exists v \in \wt{I}_i \text{ such that } \Gamma_{v,u_i} \text{ goes out of } P_i\}.
    \]
    We have 
    \[
    B_i' \subset B_i \cup \mathsf{LTF}_i.
    \]
    Hence, 
    \[
    \P(B_i) \geq \P(B_i')-\P(\mathsf{LTF}_i).
    \]
    Now, by Lemma \ref{lemma: itp lt lb} we have there exist $\gamma, \vep'>0$ (depending on $\vep$) such that  
    \[
    \P(B_i') \geq \frac{1}{t^{1-\vep'}}.
    \]
    Further, by \cite[Proposition C.9]{BGZ21} we have 
    \[
    \P(\mathsf{LTF}_i) \leq e^{-c (\log t)^3}.
    \]
    Hence, choosing $t$ large enough depending on $\vep'$ we get the lemma. 
\end{proof}

We can now prove Theorem \ref{t:2minlb}. 

\begin{proof}[Proof of Theorem \ref{t:2minlb}]
    Observe that since the events $B_i$ are independent it follows from Lemma \ref{l:bbound} that if $\delta$ is sufficiently small compared to $\varepsilon$, we have, by Lemma \ref{l:bbound}, for large $t$  
    $$\P(B)\ge 1-\prod_{i=1}^{t^{1-\delta}} \P(B_i^c) \ge \frac{1}{2}.$$
    Since $A$ and $B$ are independent it follows from Lemma \ref{l:Abound} and Lemma \ref{l:bbound} that $\P(A\cap B)\ge \frac{1}{8}$. From the choice of $\kappa$ we have on the event $C_i^c$, on $x+y=2 \kappa n,\Gamma_{\bo{0},u_i}$ will have transversal fluctuation more than $\gamma (\log t)^2$. By \cite[Proposition 2.1]{BBB23} it follows that $\P(C_i)\ge 1-e^{-c{(\gamma)^3 (\log t)^6}}$. Hence $\P(C)\ge 99/100$ for $t$ sufficiently large (depending on $\gamma$). By \cite[Proposition C.9]{BGZ21} it also follows that $\P(D_i)\ge 1-e^{-c (\log t)^3} \geq 1-\frac{1}{t^2}$ for $t$ sufficiently large and hence $\P(D)\ge 99/100$. Combining these by a union bound we get $\P(A\cap B\cap C\cap D)\ge 1/10$ and this completes the proof of the theorem by Lemma \ref{l:events}. 
\end{proof}

\section{LPP Estimates}
\label{s:lpp}
In this section we prove the LPP results that we have used throughout the previous sections. We start with the proof of Lemma \ref{lemma: restriced passage time large}. 
\begin{proof}[Proof of Lemma \ref{lemma: restriced passage time large}]Using the fact that point-to-line passage time is larger than the point-to-point passage time, the lemma is a special case of Lemma \ref{l:restricted} when $s=0.$ Therefore, we refer to the proof of Lemma \ref{l:restricted}.
\end{proof}

Next we prove Lemma \ref{lemma: min interval to line}. Although, the upper bound for the right tail of maximum point-to-line passage time over a small interval was known (Lemma \ref{lemma: itl rt ub}), the minimum case considered here is new. We will prove the point-to-point variant of the lemma as well (see Lemma \ref{lemma: itp lt ub}).

\begin{figure}[t!]
    \includegraphics{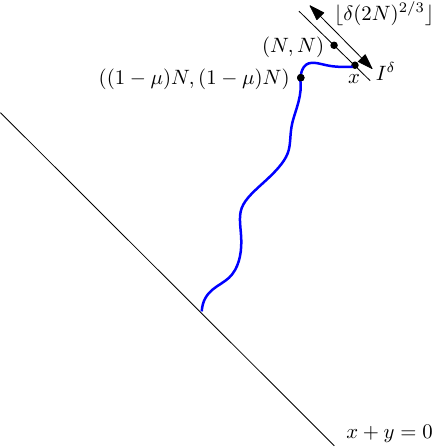}
    \caption{To prove Lemma \ref{lemma: min interval to line} we consider the point $(1-\mu) \bo{N}$ and $\delta=(\mu)^{2/3}.$ Now, if there is $x \in I^\delta$ such that $T_{\cl_0,x}$ is small then by super-additivity, either of the following event happens. Either passage time between $\cl_0$ and $(1-\mu) \bo{N}$ is small (this is the event $\ca$) or passage time between $(1-\mu)\bo{N}$ and $x$ is small (this is the event $\cb$). In the figure two blue paths show the geodesics between $\cl_0$ and $(1-\mu)\bo{N}$ and between $(1-\mu)\bo{N}$ and $x$. By applying Proposition \ref{l:p2llower} to the event $\ca$ we get the optimal exponent in the upper bound. Finally, choosing $\mu$ small enough depending on $\vep$ and applying \cite[Proposition 4.2, (i)]{BGZ21} to $\cb$ we can show that $\P(\cb)$ can be made much smaller than the desired upper bound.  }
    \label{fig: min point to line}
\end{figure}
\begin{proof}[Proof of Lemma \ref{lemma: min interval to line}]
    We consider $\mu$ small enough (to be chosen later) and consider the point $(1-\mu) \bo{N}$ (without loss of generality assume that $(1-\mu)N$ is an integer). Let $\delta=(\mu)^{2/3}.$ We define the following events. Let $\vep':=1-\left(\frac{1-\vep}{1-\frac \vep2}\right)^{1/3}.$\\
$\ca:=\{T_{\cl_{0},(1-\mu)\bo{N}}-4(1-\mu)N \leq -(1-\vep')t2^{4/3}N^{1/3}\}.$\\
$\cb:=\{\min_{x \in I^\delta}\left \{T_{(1-\mu)\bo{N},x}-4 \mu N \right \} \leq -\vep't 2^{4/3}N^{1/3}\}.$
Then (see Figure \ref{fig: min point to line})
\[
\Big \{ \min_{x \in I^\delta}\left \{T_{\cl_{0},x}-4N \right \} \leq -t2^{4/3}N^{1/3} \Big \} \subset \ca \cup \cb.
\]
By Proposition \ref{l:p2llower} we have for $t$ sufficiently large (depending on $\vep'$) and sufficiently large $N$ (depending on $t, \vep'$) 
\[
 \P( \ca) \leq e^{- \frac {1}{6}\frac{(1-\vep')^3(1-\frac \vep2)}{1-\mu}t^3} \leq e^{-\frac{1}{6}(1-\vep) t^3},
\]
where the last inequality comes from choice of $\vep'.$ For the event $\cb,$ we have by \cite[Theorem 4.2(i)]{BGZ21}, there exists $C,c>0$ such that 
\[
\P(\cb) \leq Ce^{-\frac{\vep'^3t^3}{\mu}}.
\]
Now, we can choose $\delta$ (and hence $\mu$) small enough (depending on $\vep', \vep$) so that 
\[
 Ce^{-\frac{\vep'^3t^3}{\mu}} \leq e^{- \frac{1}{6}(1-\vep) t^3}.
\]
This completes the proof. 
\end{proof}

Next we prove Lemma \ref{lemma: itp rt ub }. A version of the lemma without the optimal constant was obtained in \cite[Theorem 4.2, (ii)]{BGZ21}. For the point-to-line case an analogous lemma is Lemma \ref{lemma: itl rt ub}.
\begin{proof}[Proof of Lemma \ref{lemma: itp rt ub }]

    \begin{figure}[t!]
        \includegraphics{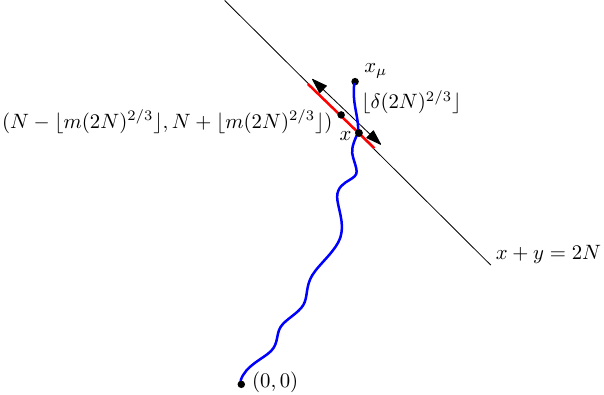}
        \caption{To prove Lemma \ref{lemma: itp rt ub }, we consider the point $x_{\mu}$ which is (essentially) the intersection point of the straight line joining $\bo{0}$ and $\bo{N}_m$ with $\cl_{2(1+\mu)N}$. We consider the interval $I^{m,\delta}$ (marked red in the figure) and choose $\delta=(\mu)^{2/3}$. If there is $x \in I^{m,\delta}$ such that $T_{\bo{0},x}$ is large, then either of the following event happens. Either, $T_{\bo{0}, x_\mu}$ is large (this is the event $\cc$) or $T_{x, x_\mu}$ is small (this is the event $\cd$). In the figure two blue paths are shown which denote the geodesics between $\bo{0}$ and $x$ and between $x$ and $x_\mu$. We apply Proposition \ref{l:p2pupper} to $\cc$ and choose $\mu$ small enough to get the optimal exponent in the upper bound. To show that $\P(\cd)$ can be made much smaller than the desired upper bound, we apply \cite[Proposition 4.2, (i)]{BGZ21} and choose $\mu$ small enough.}
        \label{fig:max interval to point}
    \end{figure}
 We consider $\mu>0$ small enough (to be fixed later). Let us consider the point $x_\mu:=\left ( (1+\mu) N-(1+\mu)m(2N)^{2/3},(1+\mu)N+(1+\mu)m(2N)^{2/3}\right)$ (without loss of generality assume that this belongs to $\Z^2$) (see Figure \ref{fig:max interval to point}). Let $\delta=(\mu)^{2/3}$. We consider the following two events for some $\vep'>0$ to be chosen later based on $\vep$ and $\mu$ to be chosen later depending on both $\vep$ and $\vep'$. 
    \begin{itemize}
        \item $\cc:=\big \{ T_{\bo{0},x_\mu} \geq \E \left(T_{\bo{0},x_\mu} \right)+(1-2\vep')2^{4/3} N^{1/3} t \big \}.$
        \item $\cd:=\big \{ \min_{u_N(x) \in I^{m, \delta}}\left(T_{u_N(x),x_{\mu}}-\E \left(T_{u_N(x),x_\mu} \right) \right)\leq-2^{4/3}\vep'N^{1/3} t\big \}.$
    \end{itemize}
   
As the slope of the line joining $\bo{0}$ to $\bo{N}_m$ is uniformly bounded away from $0$ and $\infty,$ from \eqref{eq: expectation estimate} and by a Taylor expansion we have that there exists a constant $c$ (depending on $\delta$) such that for all $x \in I^{m,\delta}$. 
\[
\E \left(T_N(x)\right)+\E \left(T_{u_N(x),x_\mu} \right) \geq \E \left( T_{\bo{0},x_\mu}\right)-cN^{1/3}.
\]
Hence, we can choose $t$ sufficiently large depending on $\delta$ such that 
    We have 
    \[
     \left \{ \max_{u_N(x) \in I^{m,\delta}  }\left(T_N(x)-\E \left(T_N(x) \right) \right)\geq 2^{4/3}N^{1/3}t \right \} \subset \cc \cup \cd.
    \]
    First we look at the event $\cc.$ We observe that if $x_\mu=(x_{\mu,1},x_{\mu,2}),$ 
    
we have by \eqref{eq: expectation estimate} for sufficiently large $t$ (depending on $\vep'$), and for sufficiently large $N$ (depending on $m_0$)
    \[
    \P(\cc) \leq \P \left(T_{\bo{0},x_\mu}- \left(\sqrt{x_{\mu,1}}+\sqrt{x_{\mu,2}} \right)^2 \geq \left (1-3\vep' \right )2^{4/3}N^{1/3}t\right).
    \]
    Further, if 
    \[
    h_\mu= \left( \sqrt{x_{\mu,1}}+\sqrt{x_{\mu,2}}\right)^{4/3} (\sqrt{x_{\mu,1}x_{\mu,2}})^{-1/3},
    \]
    Then we can choose $N$ sufficiently large (depending on $m_0, \vep'$) such that
    \[
    h_\mu \left (1-3\vep' \right) \leq 2^{4/3}(1+\mu)^{1/3}N^{1/3}.
    \]
    Combining all the above we have 
    \[
    \P(\cc) \leq \P \left(T_{\bo{0},x_\mu}- \left(\sqrt{x_{\mu,1}}+\sqrt{x_{\mu,2}} \right)^2 \geq \left (1-3\vep' \right )^2\frac{h_\mu}{(1+\mu)^{1/3}} t\right).
    \]
    So, by \cite[Theorem 1.1, (ii)]{BBBK24} we get for  sufficiently large $t$ (depending on $\vep'$) and sufficiently large $N$ (depending on $t,\vep'$)
    \[
    \P(\cc) \leq e^{-\frac 43 \left((1-3\vep')^4\frac{t^{3/2}}{\sqrt{1+\mu}} \right)}.
    \]
    Now, we chose $\vep'$ small enough (depending on $\vep$) and $\mu$ small enough (depending on $\vep$) so that the following holds. 
    \[
    \P(\cc) \leq e^{-\frac 43 (1-\vep)t^{3/2}}.
    \]
     Now, we look at the event $\cd.$ 
     By \cite[Theorem 4.2, (i)]{BGZ21} we have for sufficiently large $N$
    \[
    \P(\cd) \leq e^{-c\frac{\vep'^3 t^3}{2\mu}}. 
    \]
    Finally we can choose $\mu$ small enough (depending on $\vep, \vep'$) so that 
    \[
    \P(\cd) \leq e^{-\frac 43 (1-\vep)t^{3/2}}.
    \]
    So, we get for $t$ sufficiently large (depending on $\vep$) and $N$ sufficiently large (depending on $m_0, t, \vep$)
    \[
    \P \left( \max_{u_N(x) \in I^{m,\delta}  }\left(T_N(x)-\E \left(T_N(x) \right) \right)\geq 2^{4/3}N^{1/3}t\right) \leq 2e^{-\frac 43(1-\vep)t^{3/2}}.
    \]
    Finally, for all $\gamma>0$ we can choose $N$ sufficiently large depending on $m_0, t$ and $\gamma$ such that using \eqref{eq: expectation estimate} we have 
    \[
    \P \left(\max_{u_N(x) \in I^{m,\delta}} \left(T_N(x)-4N+2^{4/3} x^2 N^{1/3}\right) \geq (1+\gamma)2^{4/3}N^{1/3}t\right) \leq 2e^{-\frac 43 (1-\vep)t^{3/2}}.
    \]
    This completes the proof.
\end{proof}
Now we provide the proof of Lemma \ref{l:restricted}. Similar estimate as in Lemma \ref{l:restricted} was proved for restricted passage time between $\bo{0}$ and $\bo{N}$ without the optimal constant in \cite[Lemma 4.9]{BGZ21}. For the proof we use the following result about tails of passage times \cite[Theorem 1.3, (ii)]{BBBK24}: given $\varepsilon>0$, there exists $t_0>0$ and $\kappa>0$ such that for all $m,n$ large with $m/n$ bounded away from $0$ and $\infty$ and all $t\in (t_0,n^{\kappa})$ we have 

\begin{equation}
    \label{eq:tail1}
    \P\left(T_{\mathbf{0},(m,n)}-(\sqrt{m}+\sqrt{n})^2\ge t(\sqrt{m}+\sqrt{n})^{4/3}(\sqrt{mn})^{-1/3}\right)\ge e^{-(4/3+\varepsilon)t^{3/2}}.
\end{equation}

\begin{proof}[Proof of Lemma \ref{l:restricted}]
\begin{figure}[t!]
    \includegraphics{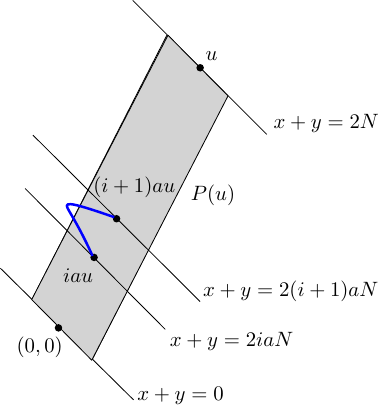}
    \caption{To prove Lemma \ref{l:restricted} we consider the lines $\cl_{2iaN}$. Now, by super-additivity we have $\wt{T}_{\bo{0},u} \geq \sum\wt{T}_{iau, (i+1)au}$ and the passage times $\wt{T}_{iau, (i+1)au}$ are independent. So, if all the passage times $\wt{T}_{iau, (i+1)au}$ are large then $\wt{T}_{\bo{0},u}$ is also large. Now, we can estimate the probability that the restricted passage times between $iau$ and $(i+1)au$ are large by the probability that $T_{iau, (i+1)au}$ is large. This can be done using the fact that, the probability that the geodesic between $iau, (i+1)au$ goes out of $P(u)$(shaded in grey in the figure) is much smaller than the desired probability due to large transversal fluctuation.}
    \label{fig:Restricted passage time large}
\end{figure}
Let us fix $s$ and $t$ as in the statement of the lemma. Let $a>0$ be a small constant to be fixed later (depending on $S$ and $T$). Without loss of generality let us also assume that $aN$ and $a^{-1}$ are both positive integers. Let us also assume without loss of generality that $au\in \Z^2$ (recall that $u=(N-\lfloor s(2N)^{2/3} \rfloor, N+\lfloor(2N)^{2/3} \rfloor$).
The reader can easily check that the argument below can be made to work without these assumptions under minor modifications. Locally, let us denote by $\widetilde{T}$ the constrained passage times within $P(u)$. Clearly, if 
$$\widetilde{T}_{iau, (i+1)au}\ge 4aN-as^22^{4/3}N^{1/3} +at(2^{4/3}N^{1/3})$$
for all $i$, then the event in the statement of the lemma holds (see Figure \ref{fig:Restricted passage time large}). Call the above event $D_{i}$, clearly $D_i$'s are independent. Observe that, by the transversal fluctuation estimates \cite[Proposition C.9]{BGZ21} we know that 
$$\P(D_i)\ge \P(D'_{i})-e^{-ca^{-2}}$$
for some $c>0$ where $D'_{i}$ is the event $D_{i}$ with $\widetilde{T}$ replaced by $T$. Rewriting the event $D_{i}$ as 
$$\widetilde{{T}}_{iau, (i+1)au}\ge 4aN-as^22^{4/3}N^{1/3} +a^{2/3}t(2^{4/3}(aN)^{1/3});$$
it follows from \eqref{eq:tail1} that for sufficiently large $N$ (depending on $S, T$)
$$\P(D_i)\ge e^{-(4/3+\varepsilon)at^{3/2}}-e^{-ca^{-2}}$$
as long as $at^{3/2}\gg 1$. Now, choose $a$ such that $a^{-2}\gg at^{3/2}\gg 1$ such that 
$$\P(D_i)\ge  e^{-(4/3+\varepsilon)at^{3/2}}(1-a),$$ and using the fact that $D_i$s are independent we get 
$$\P(\cap D_i)\ge  \left (e^{-(4/3+\varepsilon)at^{3/2}}(1-a)\right)^{1/a},$$
now by chosing $a$ sufficiently small (depending on $T$) we can see that $(1-a)^{1/a}$ is uniformly lower bounded by a positive constant. 
This completes the proof. 
\end{proof}
\begin{proof}[Proof of Lemma \ref{lemma: itp lt ub}]
The proof is similar to Lemma \ref{lemma: min interval to line}. We choose $\vep'$ small enough (to be chosen later depending on $\vep$) and $\mu$ small enough (to be chosen later depending on both $\vep$ and $\vep'$) and consider the line $\cl_{2 (1-\mu) N}$ (without loss of generality assume $(1-\mu)N$ is an integer). We set $\delta:=(\mu)^{2/3}$. We consider the point \\$x_\mu:=\left ((1-\mu)N-(1-\mu)m(2N)^{2/3},(1-\mu)N+(1-\mu)m(2N)^{2/3} \right). $ We define the following events.
    \begin{itemize}
        \item $\cc':=\big \{ T_{\bo{0},x_\mu} \leq \E \left(T_{\bo{0},x_\mu}\right)-(1-2\vep')2^{4/3} N^{1/3} t \big \}.$
        \item $\cd':=\big \{ \min_{u_N(x) \in I^{m, \delta}}T_{x_{\mu},u_N(x)x} \leq \E \left(T_{x_{\mu},u_N(x)} \right)-2^{4/3}\vep'N^{1/3} t\big \}.$
    \end{itemize}
    As we have argued before in Lemma \ref{lemma: itp rt ub } we get for large $t$ (depending on $\delta$ and $\vep'$) 
    \[
    \left \{ \min_{u_N(x )\in I^{m,\delta}  } \left \{T_N(x)-\E \left(T_N(x) \right) \right \} \leq -2^{4/3}N^{1/3}t\right\} \subset \cc' \cup \cd'.
    \]
    We find upper bounds for $\P(\cc')$ and $\P(\cd')$ exactly in the same way as we did in proof of Lemma \ref{lemma: itp rt ub }. To avoid repetition we omit the details.
\end{proof}

\begin{proof}[Proof of Lemma \ref{lemma: itp lt lb}]

This lemma was already proved in case of (1,1) direction in \cite[Lemma 7.2]{BBBK24}. The proof in the general direction is essentially the same, we provide the details below for completeness. Let us define the following events. $\vep'$ will be chosen later depending on $\vep$ and $\mu$ is small enough (to be chosen later depending on both $\vep$ and $\vep'$) and $\delta=(\mu)^{2/3}.$ Consider the point (without loss of generality assume $(1+\mu)N$ is an integer). $$x_\mu:=\left((1+\mu)N-(1+\mu)m(2N)^{2/3}, (1+\mu)N+(1+\mu) m (2N)^{2/3}\right).$$ 

We now define the following events.
\begin{itemize}
    \item $\cc'':=\left \{ T_{\bo{0},x_\mu}-\E \left(T_{\bo{0},x_\mu} \right)\leq -2^{4/3}N^{1/3}(1+\vep')t\right \},$
    \item $\cd'':=\left \{\min_{u_N(x) \in I^{m,\delta}} \left \{T_{u_N(x),x_\mu}-\E \left( T_{u_N(x),x_\mu}\right)\right \} \geq -2^{4/3}N^{1/3}\frac{\vep'}{2}t \right \}.$
\end{itemize}
As we have argued before in the proof of Lemma \ref{lemma: itp rt ub },
we can choose $t$ large enough depending on $\delta$ such that
\[
\cc'' \cap \cd'' \subset \left \{\max_{u_N(x) \in I^{m,\delta}} \left(T_N(x)-\E \left(T_{N}(x)\right)\right) \leq -2^{4/3}N^{1/3}t\right \}.
\]

By the calculation we did earlier, for $t$ sufficiently large (depending on $\vep'$), for $N$ sufficiently large depending on $t, m_0$ and $\vep'$ we have by \cite[Theorem 1.5, (ii)]{BBBK24} 
\[
\P(\cc'') \geq e^{-\frac{1}{12}\frac{(1+\vep')^4}{1+\mu}t^3}.
\]
We chose $\vep'$ and $\mu$ small enough (depending on $\vep$) so that
\[
\P(\cc'') \geq e^{-\frac{1}{12}(1+\vep)t^3}.
\]
For the event $\cd'',$ by \cite[Theorem 4.2, (i)]{BGZ21} we have 
\[
\P((\cd'')^c) \leq e^{-c\frac{\vep'}{\mu} t^3}.
\]
Now, we have 
\[
\P(\cc'' \cap \cd'') \geq \P(\cc'')-\P((\cd'')^c).
\]
Finally choosing $\mu$ small enough depending on $\vep'$ and on $\vep$ 
proves the lemma. 
\end{proof}
\section{Weak Convergence to $\text{Airy}_1$ Process}
\label{s:weak}
\begin{proof}[Proof of Theorem \ref{t:lppairy1limit}]
    We already have the convergence in sense of finite dimensional distributions \cite{BFPS07,S05,CPF12}. So, by \cite[Theorem 7.1, Theorem 7.3]{B99} it is enough to prove the following the following tightness result: For any $M, \gamma, \lambda>0$ there exists $\delta>0$ such that for $n$ sufficiently large we have 
    \[
    \P \left( \sup_{x,y: |t_1|,|t_2| \leq M, |t_1-t_2|< \delta }|T_n^*(t_1)-T_n^*(t_2)| \geq \lambda 2^{4/3}n^{1/3}\right) < \gamma.
    \]
The proof follows similar idea as \cite[Theorem 3.8]{BGZ21}. We fix $\delta>0$ small enough (to be chosen later). Without loss of generality we divide the interval $[-M,M]$ into $\approx \frac M\delta$ many sub-intervals each having equal length $2 \delta$ and denote these intervals by $I_i^\delta.$ We apply Lemma \ref{lemma: local change of ptl passage time} below to each of these intervals. We take $\vep=2/3, \ell=\frac{\lambda2^{4/3}}{\delta^{1/2}}$ and $s=\delta$ 
in Lemma \ref{lemma: local change of ptl passage time}. Thus, using the fact that the distribution of $T_N^*(s)$ does not depend on $s$ and Lemma \ref{lemma: local change of ptl passage time}, we have for each $i,$
\[
\P \left( \sup_{t_1,t_2: t_1,t_2\in I_i^\delta,}|T_n^*(t_1)-T_n^*(t_2)| \geq \lambda 2^{4/3}n^{1/3}\right) \leq Ce^{-c \frac{\lambda}{\delta^{1/2}}}.
\]
Hence, taking a union bound we get 
\[
    \P \left( \sup_{t_1,t_2: |t_1|,|t_2| \leq M, |t_1-t_2|< \delta }|T_n^*(t_1)-T_n^*(t_2)| \geq \lambda 2^{4/3}n^{1/3}\right) \leq C \frac M\delta e^{-c \frac{\lambda}{\delta^{1/2}}}.
    \]
    Now, by taking $\delta>0$ small enough we can take the right side to be smaller than $\gamma.$ This completes the proof. 
\end{proof}
So, we have to only prove the following lemma.
\begin{lemma}
\label{lemma: local change of ptl passage time}
For any $s_0>0, \varepsilon>0$ and for any $s \in [0,s_0]$ and $\ell$ sufficiently large (depending on $\vep, s_0$) and $n$ sufficiently large (depending on $s$) there exists $C,c>0$ (depending on $s_0$ and $\vep$) such that 
\[
\P \left ( \max_{|t_1|,|t_2| \leq s}|T_n^*(t_1)-T_n^*(t_2)| \geq n^{1/3}\ell^{1/3+\vep} s^{1/2} \right) \leq Ce^{-c \ell^{3 \vep/2}}.
\]
    
\end{lemma}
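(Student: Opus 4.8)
The plan is to control the modulus of continuity of the point-to-line profile $T_n^*(\cdot)$ on an interval of scaled length $s$ by comparing, for each pair of scaled locations $t_1,t_2$, the passage times to the corresponding endpoints $u_n(t_1)$ and $u_n(t_2)$ on $\cl_{2n}$. First I would reduce to a one-sided bound: by symmetry of the interval and a union bound over a partition of $[-s,s]$ into sub-intervals of scaled length $O(1)$ (there are $O(s)$ of them, which is absorbed into the constant after adjusting $\vep$), it suffices to bound $\P(\max_{t_1,t_2} (T_n^*(t_1)-T_n^*(t_2)) \ge \frac12 n^{1/3}\ell^{1/3+\vep}s^{1/2})$ for $t_1,t_2$ ranging over a single scaled-$O(s)$ window, say $[0,s]$ after translation; here I use that the law of $T_n^*(t)$ does not depend on $t$.

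The core estimate is a resampling/path-surgery comparison. Fix a reference point, say the left endpoint $w=u_n(0)$ on $\cl_{2n}$, and for $t\in[0,s]$ write $T_n^*(t) = T_{\cl_0, u_n(t)}$. To compare $T_n^*(t)$ with $T_n^*(0)$: take the point-to-line geodesic $\Gamma_{\cl_0,w}$; on the event that this geodesic has transversal fluctuation at most $C\ell^{1/6}n^{2/3}$ near $\cl_{2n}$ (which by the standard transversal fluctuation bound \cite[Proposition C.9]{BGZ21}, or the cited \cite[Proposition 2.1]{BBB23}, fails with probability $\le e^{-c\ell^{1/2}}$, small enough), one can append to it a short up-right path from a point near $w$ to $u_n(t)$ of unscaled length $O(sn^{2/3})$, thereby lower bounding $T_n^*(t)$; symmetrically one upper bounds. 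The fluctuation of the short connecting segment — the weight of an optimal path across a parallelogram of unscaled dimensions $O(sn^{2/3})\times O(sn^{2/3})$ — is then the quantity to control on scale $n^{1/3}s^{1/2}$ times a large deviation parameter; this is exactly the regime of \cite[Theorem 4.2]{BGZ21}, which gives that such an increment exceeds $\ell^{1/3+\vep}s^{1/2}n^{1/3}$ with probability at most $C e^{-c\ell^{1/2+3\vep/2}}$ (the cube of $\ell^{(1/3+\vep)/1}$ scaled by $s^{3/2}/s^{3/2}$), comfortably beating $e^{-c\ell^{3\vep/2}}$. Chaining these two-point comparisons along a dyadic net of scaled mesh $\sim \ell^{-1/2}s$ inside $[0,s]$ and taking a union bound over the $O(\ell^{1/2})$ net points produces the continuity bound with the constants as stated; continuity of $T_n^*$ in $t$ lets us pass from the net to the supremum.

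The main obstacle will be making the path-surgery argument quantitatively honest: one must show that conditioning on the event that the reference geodesic is not wildly fluctuating does not distort the weight estimates, and that the "short connecting segment" increment is genuinely independent (or can be decoupled from) the bulk passage time $T_n^*(0)$ up to acceptable error. The cleanest route is probably to avoid conditioning on geodesics altogether: instead bound $|T_n^*(t_1)-T_n^*(t_2)|$ directly by $\max$ over $x$ on $\cl_{2n}$ in a window of the difference of last-passage times from $x$ to the two endpoints, and then invoke the interval-to-line estimates (Lemma \ref{lemma: itl rt ub} together with its lower-tail analogue, or directly \cite[Theorem 4.2]{BGZ21}) applied at the appropriate deviation scale. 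I expect the bookkeeping of exponents — tracking how $\ell^{1/3+\vep}$, cubed inside the large-deviation rate and divided by the $s^{3/2}$ coming from the window size, yields $\ell^{1/2+3\vep/2}\gg \ell^{3\vep/2}$ — to be the only genuinely delicate point, and everything else to be routine given the cited inputs.
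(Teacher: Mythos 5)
There is a genuine gap, and it lies at the heart of your core estimate. Your path-surgery bound branches off the reference geodesic at a point $p$ only $O(sn^{2/3})$ below $\cl_{2n}$ and connects $p$ to $u_n(t)$. Writing the resulting inequality as $T_n^*(t)\ge T_n^*(0)-T_{p,u_n(0)}+T_{p,u_n(t)}$, the two short passage times go to endpoints whose anti-diagonal displacement $t(2n)^{2/3}$ is \emph{comparable to the depth of the branch point}, so their means differ by order $sn^{2/3}$ (a macroscopic change of direction, i.e.\ a curvature loss), which dwarfs the allowed error $\ell^{1/3+\vep}s^{1/2}n^{1/3}$ by a factor $s^{1/2}n^{1/3}$. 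The fluctuation bookkeeping is also off: a crossing of a box of side $O(sn^{2/3})$ fluctuates on scale $(sn^{2/3})^{1/3}=s^{1/3}n^{2/9}$, not $s^{1/2}n^{1/3}$, so this is not "the regime of \cite[Theorem 4.2]{BGZ21}" at the claimed scale, and the exponent $\ell^{1/2+3\vep/2}$ you extract is not what that comparison yields. Your fallback route (bounding $|T_{x,u_n(t_1)}-T_{x,u_n(t_2)}|$ by comparing each term to its mean) also fails to give a bound uniform in $s$: the required deviation is $\ell^{1/3+\vep}s^{1/2}$ in units of $n^{1/3}$, giving only $e^{-c\,\ell^{1/2+3\vep/2}s^{3/4}}$, which for small $s$ (e.g.\ $s\sim\ell^{-2}$) is weaker than the target $e^{-c\ell^{3\vep/2}}$. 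Finally, your chaining step is circular as stated: passing from a dyadic net to the supremum requires exactly the modulus-of-continuity control being proven.

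The missing idea, which is the paper's key device, is to do the comparison across a slab whose depth is tuned to the spatial scale: after reflecting so that the varying points lie on $\cl_0$, one introduces the intermediate line $\cl_{2\ell s^{3/2}n}$ (depth $\ell s^{3/2}n$, the temporal scale matching spatial increments $sn^{2/3}$, inflated by $\ell$). By geodesic ordering and the local transversal fluctuation estimate \cite[Lemma 7.3]{BBBK24}, all geodesics started from the interval $I^s$ cross this line inside a common window $I$ of width $\ell^{2/3+\vep}sn^{2/3}$, except with probability $e^{-c\ell^{3\vep}}$. Writing $T_{x,\cl_{2n}}=T_{x,z}+T_{z,\cl_{2n}}$ with $z\in I$ and $T_{\bo{0},\cl_{2n}}\ge T_{\bo{0},z}+T_{z,\cl_{2n}}$, the common continuation cancels and only increments across the thin slab remain; there the natural fluctuation scale is $(\ell s^{3/2}n)^{1/3}=\ell^{1/3}s^{1/2}n^{1/3}$ and the curvature cost of moving the starting point within $I^s$ is only $O(s^{1/2}n^{1/3}/\ell)$, so \cite[Theorem 4.2]{BGZ21} applied to $\max_{x\in I^s,\,y\in I}|T_{x,y}-\E T_{x,y}|$ gives $e^{-c\ell^{3\vep/2}}$ uniformly in $s$ (with a separate, easier case when $\ell s^{3/2}>1$). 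Without this choice of intermediate scale, neither the surgery nor the mean-comparison route closes.
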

\begin{proof}
In this proof for convenience, we consider the point-to-line passage time instead of line-to-point passage time (i.e., define $T_n^*(t)$ to be $T_{u_0(t),\cl_{2n}}).$ Note that, by reflection invariance it is sufficient to prove the lemma for this definition.
First we note that 
\begin{align*}
\Big \{\max_{|t_1|,|t_2| \leq s}|T_n^*(t_1)-T_n^*(t_2)| \geq n^{1/3}\ell^{1/3+\vep} s^{1/2} \Big \} \subset \Big \{\max_{|t| \leq s }|T_n^*(t)-T_n^*(0)| \geq \frac{n^{1/3}\ell^{1/3+\vep} s^{1/2}}{2} \Big \} \\ \bigcup \Big\{\max_{|t| \leq s}|T_n^*(t)-T_n^*(0)| \geq \frac{n^{1/3}\ell^{1/3+\vep} s^{1/2}}{2} \Big\}.
\end{align*}
So, it is enough to prove the lemma assuming $v=0$. First we consider the case when $\ell s^{3/2} <1$ and without loss of generality assume $\ell s^{3/2} n$ is an integer. Let $I^s$ denote the interval of length $\lfloor 2sn^{2/3} \rfloor $ on $\cl_0$ with midpoint $\bo{0}$. Let $a,b$ denote the end points of $I^s$. Let us consider the line $\cl_{2 \ell s^{3/2} n}.$ Let us consider the line segment of length $\lfloor 2 \ell^{2/3+\vep} s n^{2/3} \rfloor$ on $\cl_{2 \ell s^{3/2} n}$ with midpoint $\bo{\ell s^{3/2} n}$ and we denote it by $I.$ Now, let $\mathsf{LTF}$ denote the event that for some $x \in I^s, \Gamma_{x, \cl_{2n}}$ intersects $\cl_{2 \ell s^{3/2} n}$ outside $I$. By ordering of geodesics, this will imply either $\Gamma_{a, \cl_{2n}}$ or $\Gamma_{b, \cl_{2n}}$ will intersect $\cl_{2 \ell s^{3/2} n}$ outside $I.$ By local transversal fluctuation \cite[Lemma 7.3]{BBBK24} of point-to-line geodesic, we have, there exists $c>0$ such that 
\begin{equation}
\label{eq: event LTF}
\P(\mathsf{LTF}) \leq e^{-c\ell^{3\vep}}.
\end{equation}
Now, we consider another event.\\
\begin{itemize}
    \item $\cc:= \big \{ \max_{x \in I^s, y \in I}|T_{x,y}-\E T_{x,y}| \geq n^{1/3} \ell^{1/3+\vep}s^{1/2} \big \}.$
\end{itemize}
By \cite[Theorem 4.2, (i), (ii)]{BGZ21} we have for some $c>0$ with $\ell$ sufficiently large (depending on $\vep, s_0$)
\begin{equation}
\label{eq: event C}
\P(\cc) \leq s\ell^{\vep}e^{-c \ell^{3\vep/2}} \leq e^{-c' \ell^{3 \vep/2}}.
\end{equation}
Further, we have, using \eqref{eq: expectation estimate} and a calculus argument that for any $x \in I^s$ and $y \in I,$ there exists $C>0$ (depending on $s_0$) such that for $n$ sufficiently large (depending on $s$) and $\ell$ sufficiently large (depending on $s_0$)
\[
|\E T_{x,y}-\E T_{\bo{0},y}| \leq C \ell^{1/3+\vep} n^{1/3}s^{1/2}.
\]
Now, on the event $\mathsf{LTF}^c \cap \cc^c$ we have the following. Let $x \in I^s$ and let $\Gamma_{x, \cl_{2n}}$ intersects $\cl_{2 \ell s^{3/2} n}$ at a vertex $z$. On the event $\mathsf{LTF}^c, z$ lies in $I.$ Further, on the event $\mathsf{LTF}^c \cap \cc^c,$  we have for sufficiently large $\ell$ (depending on $s_0$) and $n$ sufficiently large (depending on $s$)
\[
|T_{\bo{0},z}-T_{x,z}| \leq C \ell^{1/3+\vep} n^{1/3} s^{1/2},
\]
for some $C>0.$
Finally, we have on $\mathsf{LTF}^c \cap \cc^c,$
\[
T_{\bo{0},\cl_{2n}} \geq T_{\bo{0},z}+T_{z, \cl_{2n}} \geq T_{x,z}+T_{z, \cl_{2n}}-C \ell^{1/3+\vep} n^{1/3} s^{1/2}=T_{x, \cl_{2n}}-C \ell^{1/3+\vep} n^{1/3} s^{1/2}.
\]
For the other side, let $z$ now denote the intersection point of $\Gamma_{\bo{0}, \cl_{2n}}$ and $\cl_{2 \ell s^{3/2} n}$. Then again on $\mathsf{LTF}^c \cap \cc^c,$ we have 
\[
T_{\bo{0}, \cl_{2n}}=T_{\bo{0},z}+T_{z, \cl_{2n}} \leq T_{x,z}+C\ell^{1/3+\vep} n^{1/3}s^{1/2}+T_{z,\cl_{2n}} \leq T_{x, \cl_{2n}}+C \ell^{1/3+\vep} n^{1/3}s^{1/2}.
\]
Hence, we get 
\[
\mathsf{LTF}^c \cap \cc^c \subset \Big \{ \max_{|t|\leq s}|T_n^*(t)-T_n^*(0)| \leq C n^{1/3}  \ell^{1/3+\vep} s^{1/2}\Big \}.
\]
Hence, by \eqref{eq: event LTF} and \eqref{eq: event C} we get for some $C'>0$
\[
\P \left( \max_{|t_1|,|t_2| \leq s}|T_n^*(t_1)-T_n^*(t_2)| \geq C n^{1/3}  \ell^{1/3+\vep} s^{1/2} \right) \leq C'e^{-c \ell^{3 \vep/2}}.
\]
For the case when $\ell s^{3/2}> 1,$ we observe that
\[
\left \{\max_{|t_1|,|t_2| \leq s}|T_n^*(t_1)-T_n^*(t_2)| \geq n^{1/3}\ell^{1/3+\vep} s^{1/2} \right \} \subset \left \{ \max_{|t_1|,|t_2| \leq s}|T_n^*(t_1)-T_n^*(t_2)| \geq n^{1/3}\ell^{\vep}\right \}.
\]
Now, by the same way we argued before, we can restrict the point-to-line geodesics $\Gamma_{x, \cl_{2n}}$ starting from $I^s,$ on an interval $I$ of length $\lfloor \ell^{\vep}n^{2/3} \rfloor$ on $\cl_{2n}.$ We define an event similar to $\cc$ as 
\begin{itemize}
    \item $\cc_1:=\big \{ \max_{x \in I^s, y \in I}|T_{x,y}-\E T_{x,y}| \geq n^{1/3} \ell^{\vep} \big \}.$
\end{itemize}
Now, for $x \in I^s, y \in I$, comparing $\E \left(T_{x,y} \right)$ and $\E \left(T_{\bo{0},y} \right)$ as before and arguing similarly, we get the desired upper bound in this case. This completes the proof. 
\end{proof}

Before we finish, let us remark how Theorem \ref{t:lppairy1limit} leads to the proofs of several properties for the Airy$_1$ process.

\begin{remark}
\label{r:sm}
    Airy$_1$ process is strong mixing and hence ergodic. For $t>0$ large and $t_1,t_2, \dots t_{k}\le 0$, $t_{k+1}, t_{k+2},\ldots t_{2k}\ge t$, let $T^{**}_{N}(t_i)$ denote the weight of the best path from $\mathcal{L}_0$ to $u(t_i)$ that does not cross the line $x-y=t(2N)^{2/3}$. As in the proofs of Theorems \ref{thm: point to line max limit lower bound} and \ref{thm: point to line min limit lower bound},  $\{T^{**}_{N}(t_i)\}_{i\le k}$ are independent of $\{T^{**}_{N}(t_i)\}_{i>k}$ and $\P(\exists i : T^*_{N}(t_i)\neq T^{**}_{N}(t_i))\le e^{-ct^3}$. It is easy to see that this, together with Theorem \ref{t:lppairy1limit} implies that $\ca_1$ is strong mixing. The observation that $\ca_1$ is strong mixing appears to be new, but ergodicity was proved in \cite[Theorem 1.1]{P23} using the association property of $\ca_1$. Strong mixing of $\ca_2$ is also known \cite{PSpng02}. One would expect that strong mixing of $\ca_2$ could also be proved using LPP arguments, but it is not immediate from our estimates. 
\end{remark}

\begin{remark}
    \label{r:ass}
    The processes $\ca_1$ and $\ca_2$ are associated. Indeed one can see that $T_{N}(s_i)$ and $T^*_{N}(s_i)$ are increasing functions of independent random variables therefore any finite collection of them are associated \cite{EPW67}. The weak convergence results Theorems \ref{t:lppairy1limit} and \ref{t:lppairy2limit} then implies that the processes $\ca_1(\cdot)$ and $\ca_2(\cdot)$ are also associated using \cite{EPW67}. This result already appears in \cite[Theorem 1.2]{P23}, where it is proved using a result from \cite{CKNP23} about association property of the stochastic heat equation (SHE) starting from the narrow wedge and flat initial conditions. This result is crucially used in \cite{P23} in several other proofs. 
\end{remark}

\bibliography{intersection}
\bibliographystyle{plain}
\end{document}